\newtheorem{theorem}{Theorem}[section]
\newtheorem{corollary}{Corollary}
\newtheorem{lemma}[theorem]{Lemma}
\newtheorem{definition}[theorem]{Definition}
\newtheorem{remark}{Remark}
\title{A bifurcation for a generalized Burgers' equation in dimension one}
\author{Jean-Fran\c cois Rault}
\begin{document}
\maketitle

\centerline{\scshape Jean-Fran\c cois Rault }
\medskip
{\footnotesize
 \centerline{LMPA Joseph Liouville (ULCO) FR 2956 CNRS}
   \centerline{Universit\'e Lille Nord de France}
   \centerline{ 50 rue F. Buisson, B.P. 699, F-62228 Calais Cedex (France)}
    \centerline{ \url{jfrault@lmpa.univ-littoral.fr}}
} 

\bigskip

\thispagestyle{plain}
\begin{abstract}
We consider the generalized Burgers' equation
\begin{eqnarray*}
\left\{
    \begin{array}{ll}
			\partial_t u = \partial_x^2u - u \partial_x u + u^p - \lambda u  &\textrm{ in } \overline{\Omega} \textrm{ for  } t>0, \\
			\mathcal{B}(u)=0  & \textrm{ on } \partial \Omega \textrm{ for  } t>0, \\
			u(\cdot,0) = \varphi \geq  0  &  \textrm{ in } \overline{\Omega},
    \end{array}
\right.		
\end{eqnarray*}
with $p>1$, $\lambda \in \mathbb{R}$, $\Omega$ a subdomain of $\mathbb{R}$, and where $\mathcal{B}(u)=0$ denotes some boundary conditions. First, using some phase plane arguments, we study the existence of stationary solutions under the Dirichlet or the Neumann boundary conditions and prove a bifurcation depending on the parameter $\lambda$. Then, we compare positive solutions of the parabolic equation with appropriate stationary solutions to prove that global existence can occur when $\mathcal{B}(u)=0$ stands for the Dirichlet, the Neumann or the dissipative dynamical boundary conditions $\sigma \partial_t u + \partial _\nu u=0$. Finally, for many boundary conditions, global existence and blow up phenomena for solutions of the nonlinear parabolic problem in an unbounded domain $\Omega$ are investigated by using some standard super-solutions and some weighted $L^1-$norms. \\

\noindent \emph{2010 Mathematics Subject Classification:} 35A01, 35B32, 35K55  , 35B44.\\
 \emph{keywords: }Bifurcation, Existence of solution, Blow-up, Phase plane.
\end{abstract}

\section{Introduction}
Let $\Omega$ be a domain of the real line $\mathbb{R}$, not necessarily bounded. Let $p$ be a real number with $p>1$, $\lambda \in \mathbb{R}$ and $\varphi$ a non-negative continuous function in $\overline{\Omega}$. Consider the following nonlinear parabolic problem 
\begin{equation}\label{B_param}
\left\{
    \begin{array}{ll}
        \partial _t u = \partial_x^2 u -u\partial_x u + u^p - \lambda u& \textrm{ in }\overline{\Omega} \textrm{ for  } t>0, \\
       \mathcal{B}(u)= 0 & \textrm{ on } \partial \Omega \textrm{ for  } t>0, \\
        u(\cdot , 0) = \varphi & \textrm{ in } \overline{\Omega} ,
    \end{array}
\right.
\end{equation}
where $\mathcal{B}(u)= 0$ stands for the Dirichlet boundary conditions ($u=0$), the Neumann boundary conditions ($\partial_\nu u=0$) or the dynamical boundary conditions ($\sigma \partial _t u + \partial _\nu u=0$ with $\sigma$ a non-negative smooth function).  For the local existence of the positive solutions of this problem, we refer to von Below and Mailly's results \cite{vBPM} and references therein, \cite{BBR}, \cite{vBN} and \cite{Escher} . In the first section, we study the stationary equation 
\begin{equation}\label{Bstat_param}
u''-uu'+u|u|^{p-1}-\lambda u =0
\end{equation}
stemming from Problem (\ref{B_param}). We aim to prove the existence of positive and sign-changing solutions using phase plane arguments and dealing with the first order system
\begin{eqnarray}\label{Bstat_sys}
\left( \begin{array}{c}
u'  \\
v'
\end{array} \right)   
=
\left( \begin{array}{c}
v\\
uv - u|u|^{p-1}+\lambda u \end{array} \right) .
\end{eqnarray}
We prove a bifurcation in the phase plane of this system, depending on the parameters $\lambda$ and $p$, which influences the resolution of Equation (\ref{Bstat_param}) under the Dirichlet, the Neumann and the mixed boundary conditions. Then in a second section, using the comparison method from \cite{vBDC}, we deduce from the solutions of the stationary Equation (\ref{Bstat_param}) some regular super-solutions for the Problem (\ref{B_param}). Dealing with these super-solutions and with the blow-up results from \cite{vBPM}, we investigate global existence and blow-up phenomena for the Problem (\ref{B_param}) for different values of $\lambda$ and $p$, and for the Dirichlet, the Neumann and the dynamical boundary conditions. We also examine both phenomena in unbounded domains: we obtain global existence results with the comparison method and using some well-known super-solutions (we mean explicit functions) for the Dirichlet, the Neumann and the dynamical boundary conditions. The blowing-up concerns the regular solutions of Problem (\ref{B_param}) satisfying some growth order at infinity and some boundary conditions such that
\begin{itemize}
\item $\partial_\nu u=0$ (Neumann b.c.),
\item $\partial_\nu u=g(u)$ with $g$ a polynomial of degree $2$ (nonlinear b.c.).
\end{itemize}
We use some weighted $L^1-$norms: our technique is to prove the blowing-up of the solution by proving the blowing-up of appropriate $L^1-$norms.\\

Before starting, let us define the kind of solution we look for:

\begin{definition}
A function $u$ is called a solution (or regular solution) of Equation (\ref{Bstat_param}) in $\Omega$ if $u$ is of class $\mathcal{C}^2(\Omega)$ and satisfies the equation in the classical sense.\\
A function $u$ is called a solution (or regular solution) of Problem (\ref{B_param}) in $\Omega$ if $u$ is of class $\mathcal{C}(\overline{\Omega} \times [0,T)) \cap \mathcal{C}^{2,1}(\overline{\Omega} \times (0,T))$ and satisfies the equations of Problem (\ref{B_param}) in the classical sense in $\Omega \times [0,T)$ where $T \in (0,\infty]$ denotes the maximal existence time of the solution $u$.
\end{definition}

\section{Stationary equation}
In this section, we study the existence of positive and sign-changing solutions of Equation (\ref{Bstat_param}) using a phase plane method. Unless otherwise stated, we suppose $p \in (1,\infty)$. For the theory of phase planes (nature of equilibrium, regularity, behaviour and uniqueness of trajectories), we refer to H.Amann's book \cite{Am}. Here we consider the system $(u',v')^t = F(u,v)$ with a $\mathcal{C}^1(\mathbb{R}^2,\mathbb{R}^2)$ function $F$ given by $F(u,v) = (v, uv -u|u|^{p-1} +\lambda u)^t$, thus uniqueness and regularity ($\mathcal{C}^1$)of the solutions ($u,v)$ come from the standard ODE's theorems. With $v=u'$, we deduce that $u$ is of class $\mathcal{C}^2$. First, we can note that System (\ref{Bstat_sys}) has three equilibrium points if $\lambda>0$: $(0,0)$, $(\lambda^\frac{1}{p-1},0)$ and $(-\lambda^\frac{1}{p-1},0)$. Using Hartman-Grobman's linearization theorem (see Reference \cite{Am}), we can state that $(0,0)$ is a saddle point, $(\lambda^\frac{1}{p-1},0)$ is an unstable and repulsive vortex (if $1-4(p-1)\lambda^\frac{p-3}{p-1}<0)$, an unstable node (if $1-4(p-1)\lambda^\frac{p-3}{p-1}\geq 0$, which degenerates when $1-4(p-1)\lambda^\frac{p-3}{p-1} =0$ ). And $(-\lambda^\frac{1}{p-1},0)$ is a stable and attractive  vortex (for $1-4(p-1)\lambda^\frac{p-3}{p-1}<0$), a stable node (for $1-4(p-1)\lambda^\frac{p-3}{p-1} \geq 0$ with degeneracy when equality occurs). If $\lambda \leq 0$, then $(0,0)$ is the only equilibrium point of System (\ref{Bstat_sys}). We will prove later that $(0,0)$ is a center.

\subsection{Case $\lambda >0$}  
Let $\lambda$ be a positive real number and $p>1$. We want to study the phase plane of the System (\ref{Bstat_sys}). First we prove a lemma on the symmetry of the trajectories:

\begin{lemma}\label{sym_lem}
The support of the trajectories of the System (\ref{Bstat_sys}) are symmetric with respect to the  ordinates axis.
\end{lemma}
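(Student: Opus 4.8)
The plan is to exhibit a time-reversal symmetry of System \ref{Bstat_sys}. Write the vector field as $X(u,v) = \bigl(v,\; uv - u|u|^{p-1} + \lambda u\bigr)$ and introduce the linear involution $R(u,v) = (-u,v)$, whose fixed-point set is exactly the ordinates axis $\{u=0\}$. The first step is the one-line algebraic check that $X$ is reversible with respect to $R$, namely
\[
DR \cdot X(u,v) = -\,X\bigl(R(u,v)\bigr) \qquad \text{for all } (u,v),
\]
where $DR = \mathrm{diag}(-1,1)$. This rests on the fact that $|-u|^{p-1} = |u|^{p-1}$, so that the odd nonlinearity $u \mapsto u|u|^{p-1}$ and the linear term $-\lambda u$ change sign under $u \mapsto -u$, while the term $uv$ changes sign for the same reason; carrying out the substitution gives the identity directly.

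From this identity the conclusion is immediate. If $x \mapsto (u(x),v(x))$ is a solution of System \ref{Bstat_sys} on an interval $I$, then, setting $w(x) := -u(-x)$ and using the chain rule together with the reversibility identity (or, equivalently, substituting $w$ and $w' = u'(-x)$, $w'' = -u''(-x)$ into Equation \ref{Bsat_param} and checking it vanishes), one finds that $x \mapsto \bigl(-u(-x),\,v(-x)\bigr) = R\bigl(u(-x),v(-x)\bigr)$ is again a solution, defined on $-I$. Hence the image under $R$ of the support of any trajectory is the support of another trajectory, which is precisely the asserted symmetry of the phase portrait with respect to the ordinates axis. I would also note, for the reader, that when a trajectory meets the axis $\{u=0\}$ its $R$-image is that same orbit traversed in reverse parameter, so such orbits are themselves symmetric, while in general the statement asserts that the union of all trajectory supports is invariant under $(u,v)\mapsto(-u,v)$.

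There is no real obstacle here: the only points requiring a little care are that the symmetry is a reversibility (it reverses the parametrization $x \mapsto -x$) rather than an equivariance, and that the vector field is genuinely $C^1$ across $\{u=0\}$ — which holds since $p>1$ makes $u\mapsto u|u|^{p-1}$ continuously differentiable — so that trajectories are well defined and unique and the phase-plane language of \cite{Am} applies. The whole argument is a short computation followed by this uniqueness remark.
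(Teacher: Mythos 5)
Your proof is correct and is essentially the paper's own argument: the paper takes a solution $(u,v)$, sets $w(x)=-u(-x)$, $z(x)=v(-x)$, and checks directly that $(w,z)$ solves System \ref{Bstat_sys}, which is exactly your reversibility identity $DR\cdot X(u,v)=-X(R(u,v))$ with $R(u,v)=(-u,v)$ evaluated along a trajectory. The only difference is that you package the computation once at the level of the vector field (and add the correct remarks on time reversal and on $C^1$ regularity of $u\mapsto u|u|^{p-1}$ for $p>1$), rather than verifying it for each solution.
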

\begin{proof}
Let $(u,v)$ denote a solution of the System (\ref{Bstat_sys}) in $(-a,a)$ for some $a \in (0,\infty]$, and define
\begin{eqnarray*}
\left\{
    \begin{array}{lll}
        w(x) &=& -u(-x) \\
        z(x) &=& v(-x)
    \end{array} \textrm{ for all } x\in (-a,a) .
\right.
\end{eqnarray*}
A simple calculus of the derivatives implies
\begin{displaymath}
w'(x) = u'(-x) = v(-x) = z(x),
\end{displaymath}
and
\begin{eqnarray*}
        z'(x) &=& -u(-x) \\
        		  &=& -v'(-x) \\
        			&=& - \Big[ u(-x)v(-x) -u(-x)|u(-x)|^{p-1} + \lambda u(-x) \Big] \\
        		  &=& w(x)z(x) -  w(x)|w(x)|^{p-1} + \lambda w(x).
\end{eqnarray*}
Then $(w,z)$ is also a trajectory of the System (\ref{Bstat_sys}), and it is symmetric to $(u,v)$ with respect to the  ordinates axis. 
\end{proof}

Thus, we can reduce our phase plane analysis to the half plane $\mathbb{R}^+ \times \mathbb{R}$. In order to draw the phase plane of the System (\ref{Bstat_sys}), we write the ordinate $v$ as a function depending on the abscissa $u$: $v=f(u)$. We do not know the function $f$, but we can deduce its variations and convexity using the equations (\ref{Bstat_sys}). For the variations, when $v\not=0$, we have
\begin{equation}\label{variation}
\frac{dv}{du} = \frac{uv - u|u|^{p-1}+\lambda u}{v} =\frac{u}{v} \Big( v- |u|^{p-1}+\lambda \Big),
\end{equation}
in particular, it vanishes along the axis $\{u=0\}$ and along the curve $\{v = |u|^{p-1}-\lambda\}$. For $u < \lambda^\frac{1}{p-1}$, we have
\begin{displaymath}
\frac{dv}{du} \Bigg|_{v=0}= \infty
\end{displaymath}
whereas for $u > \lambda^\frac{1}{p-1}$
\begin{displaymath}
\frac{dv}{du} \Bigg|_{v=0}= -\infty .
\end{displaymath}
Then we have $\frac{dv}{du} >0$ in the sets $\{ u>0 , v>0 , v>|u|^{p-1}-\lambda\}$ and  $\{ u>0 , v<0 , v<|u|^{p-1}-\lambda\}$. On the other hand, $\frac{dv}{du}<0$ in the sets $\{ u>0 , v<0 , v>|u|^{p-1}-\lambda\}$ and  $\{ u>0 , v>0 , v<|u|^{p-1}-\lambda\}$. Next, we compute the convexity of the function $f$ and we obtain
\begin{equation}\label{convexity}
\frac{d^2v}{du^2} = 1+ \frac{1}{v^2} \Bigg[ (\lambda -p|u|^{p-1})v - u(\lambda -|u|^{p-1}) \frac{dv}{du}\Bigg].
\end{equation}
We have $\frac{d^2v}{du^2}<0$ in $\{ u>0, v>0, v <|u|^{p-1} - \lambda \}$ and $\frac{d^2v}{du^2}>0$ in $\{ u>0, v<0, v <|u|^{p-1} - \lambda \}$. Since
\begin{displaymath}
\frac{d^2v}{du^2} \Bigg|_{u=0}= 1+ \frac{\lambda}{v} \textrm{ and } \frac{d^2v}{du^2} \Bigg|_{v=|u|^{p-1} - \lambda}= (1-p) \frac{|u|^{p-1}}{v} ,
\end{displaymath}
the convexity is sign-changing in $\{ u>0, v> |u|^{p-1} - \lambda\}$. These arguments are sufficient to know the profile of the trajectories in the half plane $\{ v < |u|^{p-1} - \lambda \}$.  We do not need to know how the trajectories behave in $\{ u>0, v<0, v >|u|^{p-1} - \lambda \}$ to solve Equation (\ref{Bstat_param}). In $\{ u>0, v>0, v >|u|^{p-1} - \lambda \}$, things are different: unbounded trajectories can appear (see \S \ref{unbounded}). To ensure the occurrence of bounded trajectories, we need an additional hypothesis:
\begin{equation}\label{p_3}
p \geq 3 .
\end{equation}

\begin{lemma}\label{bounded_lem}
Under hypothesis (\ref{p_3}), all the trajectories of the System (\ref{Bstat_sys}) are bounded in $A=\{ u>0, v>0, v >|u|^{p-1} - \lambda \}$.
\end{lemma}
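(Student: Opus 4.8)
The plan is to read off from the expression \ref{variation} for $dv/du$ that, inside $A$, the ordinate $v$ can grow at most quadratically in $u$, whereas the lower boundary $\{v=|u|^{p-1}-\lambda\}$ of $A$ grows at least quadratically once $p\geq 3$; a trajectory confined to $A$ would therefore eventually contradict the defining inequality $v>|u|^{p-1}-\lambda$, so it cannot escape to infinity in $A$.

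In detail, I would first note that on $A$ one has $u'=v>0$ and $v'=u\big(v-|u|^{p-1}+\lambda\big)>0$, so along a trajectory $u$ is strictly increasing as long as it stays in $A$; consequently the part of the trajectory lying in $A$ is the graph of a function $v=v(u)$ over an interval of abscissas, and (checking the sign of the field on $\partial A$) a trajectory can enter $A$ only through $\{v=0,\,0<u<\lambda^{1/(p-1)}\}$, through the $v$-axis, or out of an equilibrium, while it can leave $A$ only across $\{v=u^{p-1}-\lambda,\,v>0\}$. Writing \ref{variation} as
\[
\frac{dv}{du}=u-\frac{u\big(u^{p-1}-\lambda\big)}{v},
\]
the subtracted term is $\geq 0$ whenever $u\geq u_\ast:=\lambda^{1/(p-1)}$ (since $u>0$, $v>0$ in $A$), hence $dv/du<u$ there; integrating from $u_\ast$ yields $v(u)<v(u_\ast)+\tfrac12u^2$ for as long as the trajectory remains in $A$. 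But in $A$ one also has $v>u^{p-1}-\lambda$, and for $p\geq 3$, $u\geq 1$ one has $u^{p-1}\geq u^2$; combining these gives $u^2<2\big(v(u_\ast)+\lambda\big)$, so the abscissa stays bounded along the trajectory, and then $v<v(u_\ast)+\tfrac12u^2$ keeps the ordinate bounded too.

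It remains to dispatch the bookkeeping: the value $v(u_\ast)$ is finite because on the bounded $u$-range between the point where the trajectory enters $A$ and $u_\ast$ the slope $dv/du=u+u(\lambda-u^{p-1})/v$ is under control once $v$ has left the axis $\{v=0\}$ (and the trajectory always does reach $u=u_\ast$, since for $u<u_\ast$ it cannot leave $A$); if the trajectory crosses $A$ more than once, each excursion is handled identically. I expect the genuinely load-bearing step, and the only use of the hypothesis, to be the growth comparison $u^{p-1}\geq u^2$ pitted against the quadratic bound $v\lesssim u^2/2$: it is sharp precisely at $p=3$, which is why the conclusion fails — unbounded trajectories do occur in $A$ — for $1<p<3$.
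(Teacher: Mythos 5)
Your proof is correct and rests on the same core idea as the paper's: bound $dv/du$ by a linear function of $u$, integrate to trap the trajectory under a parabola, and play that against the lower boundary $\{v=u^{p-1}-\lambda\}$ of $A$, which grows at least quadratically once $p\geq 3$. The one genuine difference is where the integration starts: the paper integrates from the $v$-axis, so it must keep the term $\lambda u/v$ and control it via $v\geq v_0$, which produces the coefficient $\frac{1}{2}\big(1+\frac{\lambda}{v_0}\big)$; at $p=3$ this forces it to take $v_0$ large (so that the coefficient drops below $1$) and then to recover the remaining trajectories by a non-crossing/uniqueness argument. By integrating only from $u_*=\lambda^{1/(p-1)}$, where the correction term $u(\lambda-u^{p-1})/v$ becomes nonpositive, you get the clean coefficient $\tfrac12$ for every trajectory at once, so $p=3$ needs no separate treatment --- at the modest price of the bookkeeping you already supply to ensure $v(u_*)$ is finite.
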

\begin{proof}
Let $v_0>0$ and consider $(u,v)$ the solution of the System (\ref{Bstat_sys}) with initial data $(u(0),v(0))=(0,v_0)$. The calculus of the variations (see Equation (\ref{variation})) ensures that $(u(t),v(t)) \in A$ for small $t>0$. We prove that there exist $0<\tau<\infty$ such that $v(\tau)= |u(\tau)|^{p-1} - \lambda$. It means that $(u,v)$ is bounded in $A$. Since $(u,v)$ belongs to $A$, we have
\begin{displaymath}
\frac{dv}{du} = u + \frac{\lambda u}{v} - \frac{u|u|^{p-1}}{v} \leq u + \frac{\lambda u}{v}.
\end{displaymath}
Then $\frac{dv}{du} \geq 0$ in $A$ implies $v>v_0$ as long as $(u,v) \in A$, and we obtain
\begin{displaymath}
\frac{dv}{du}  \leq u \Big(1+ \frac{\lambda }{v_0} \Big) .
\end{displaymath}
Integration gives
\begin{displaymath}
v \leq \frac{1}{2}\Big(1+ \frac{\lambda }{v_0} \Big) u^2 + v_0 .
\end{displaymath}
If $p>3$, the intersection $\{ v=|u|^{p-1} - \lambda \} \cap\{ v= \frac{1}{2}\Big(1+ \frac{\lambda }{v_0} \Big) u^2 + v_0 \}$ is non-empty for all $v_0>0$. If $p=3$,  we need to choose $v_0$ sufficiently big such that 
\begin{displaymath}
\frac{1}{2}\Big(1+ \frac{\lambda }{v_0} \Big) <1 .
\end{displaymath}
Then, the trajectory $(u,v)$ belongs to the compact 
\begin{displaymath}
\{ u\geq 0, v \geq |u|^{p-1} - \lambda, v \leq \frac{1}{2}\Big(1+ \frac{\lambda }{v_0} \Big) u^2 + v_0\},
\end{displaymath}
and, using $ \frac{dv}{du} \geq 0$, we know that there exist $0<\tau<\infty$ such that $v(\tau)= |u(\tau)|^{p-1} - \lambda$. This argument proves that each solution of the System (\ref{Bstat_sys}) with initial data $(u(0),v(0))=(0,v_0)$ is bounded in $A$ if $v_0$ is big enough. Thanks to uniqueness of solution, it also proves the result for all the solutions initiated in $A$.
\end{proof}

\begin{figure}[h]
\begin{center}
  \includegraphics[width=4in]{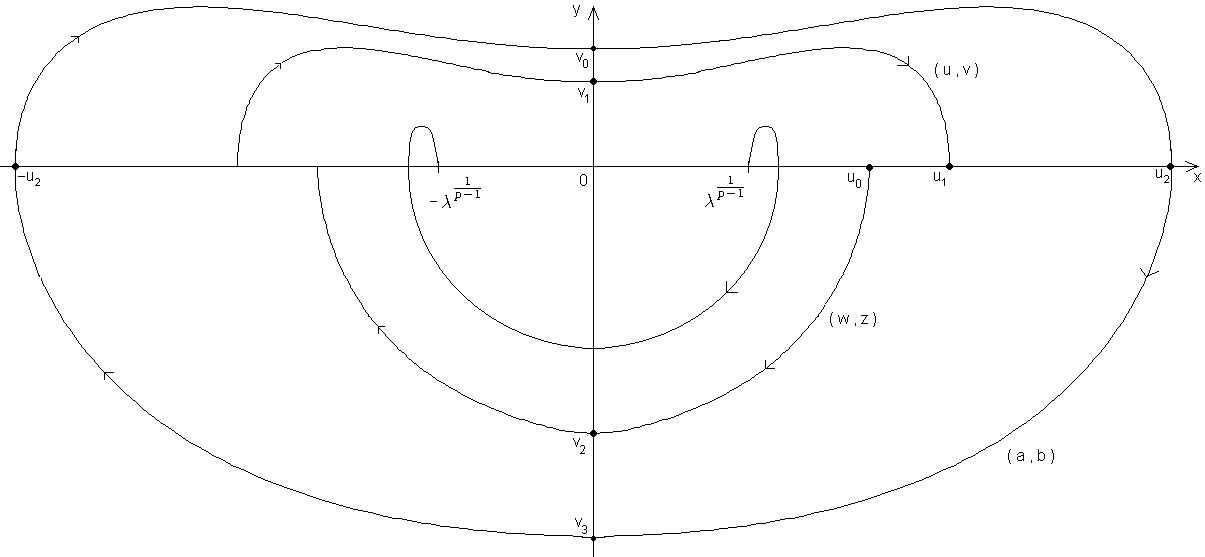}\\
  \caption{Phase plane for $p\geq 3$ and $\lambda>0$.}\label{phase1}
  \end{center}
\end{figure}

Then, we complete this phase plane analysis by proving the existence of periodic trajectories.

\begin{lemma}\label{lem-per}
Assume that hypothesis (\ref{p_3}) is fulfilled. Then, there exists periodic trajectories of the System (\ref{Bstat_sys}).
\end{lemma}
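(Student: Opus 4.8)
The plan is to produce a closed orbit around the equilibrium $(\lambda^{1/p-1},0)$ by a standard Poincar\'e--Bendixson argument, exploiting the symmetry of Lemma~\ref{sym_lem}, the sign information on $dv/du$ computed above, and the boundedness statement of Lemma~\ref{bounded_lem}. Concretely, I would fix $v_0>0$ large enough (large enough that the hypothesis of Lemma~\ref{bounded_lem} applies when $p=3$) and follow the trajectory $(u,v)$ issuing from $(0,v_0)$. In the region $A=\{u>0,\ v>0,\ v>|u|^{p-1}-\lambda\}$ we have $dv/du\ge 0$ and, by Lemma~\ref{bounded_lem}, the trajectory reaches the curve $\Gamma=\{v=|u|^{p-1}-\lambda\}$ at some finite point $(u_1,v_1)$ with $u_1>\lambda^{1/p-1}$. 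I would then track the trajectory through the successive regions: in $\{u>0,\ v<|u|^{p-1}-\lambda\}$ the sign analysis of $dv/du$ (negative for $v>0$, then positive once $v<0$ with $v<|u|^{p-1}-\lambda$) together with the convexity signs from \eqref{convexity} forces the orbit to swing down, cross the $u$-axis transversally at some $u_2\in(0,\lambda^{1/p-1})$ (it cannot cross at $u\ge\lambda^{1/p-1}$ since there $dv/du|_{v=0}=-\infty<0$ pushes it left, and it cannot reach the equilibrium because that point is a node/focus with the wrong stability), and then continue into $\{u>0,\ v<0,\ v>|u|^{p-1}-\lambda\}$, where it must recross the $v$-axis at some point $(0,-v_0')$ with $v_0'>0$.

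At that point the symmetry of Lemma~\ref{sym_lem} does the rest: the reflection $x\mapsto-x$ sends this half-arc to a trajectory arc in the left half-plane, and because the vector field on the axis $\{u=0\}$ is vertical ($u'=v$, $v'=0$ there), the two arcs join up $C^1$-smoothly. To get an actual \emph{periodic} orbit rather than a spiral I would set up a Poincar\'e return map $P$ on the positive $v$-axis: $v_0\mapsto v_0'$ is the ``first half-return'' and by the symmetry argument the full return map is the second iterate. The orbit is periodic iff $v_0$ is a fixed point of the half-return composed with reflection, i.e. iff $v_0'=v_0$. I would obtain such a $v_0$ either by a shooting/continuity argument — showing $v_0\mapsto v_0'$ maps a suitable interval into itself, or that $v_0'-v_0$ changes sign — or, more cleanly, by observing that the region bounded by the outer arc from $(0,v_0)$ (with $v_0$ large) and its reflection is positively invariant and contains no equilibrium other than $(\lambda^{1/p-1},0)$, which is repulsive; then Poincar\'e--Bendixson yields a limit cycle inside this annular region, and that limit cycle is the desired periodic trajectory.

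The main obstacle, and the step I would spend the most care on, is confirming that the forward trajectory from $(0,v_0)$ genuinely returns to the negative part of the $v$-axis and does not instead escape to infinity or get trapped. Lemma~\ref{bounded_lem} controls only the passage through $A$; once the orbit is in $\{v<|u|^{p-1}-\lambda\}$ with $v>0$ I must rule out that $dv/du$ stays bounded away from producing a return, and in the lower region $\{v<0\}$ I must ensure $u$ actually decreases back to $0$ in finite ``time'' and that $v$ does not blow up downward — this needs a companion bound, e.g. integrating the inequality $dv/du\le u(1+\lambda/v)$ is no longer available for $v<0$, so I would instead use $du/dv=v/(uv-u|u|^{p-1}+\lambda u)$ and estimate the $u$-excursion directly, or compare with the level sets of a Lyapunov-type function. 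Establishing a clean positively invariant compact annulus avoiding all equilibria is really the crux; once that is in hand, Poincar\'e--Bendixson (legitimate here since the system is planar and $C^1$, as recalled from \cite{Am}) finishes the proof immediately.
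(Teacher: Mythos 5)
The idea you are missing is that Lemma \ref{sym_lem} is a \emph{time-reversal} symmetry whose fixed-point set is the $v$-axis: $(u,v,x)\mapsto(-u,v,-x)$ maps trajectories to trajectories. Hence any orbit meeting $\{u=0\}$ at two distinct points is automatically periodic: if $\gamma$ runs from $(0,v_0)$ at time $0$ to $(0,v_3)$ at time $T$ through the right half-plane, its reflection is an orbit running from $(0,v_3)$ to $(0,v_0)$ through the left half-plane, and by uniqueness it is the continuation of $\gamma$, so $\gamma(2T)=\gamma(0)$. This is how the paper closes the orbit; there is no spiralling to exclude and no fixed-point condition $v_0'=v_0$ to verify. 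Both of your substitutes for this observation are problematic. The shooting argument for $v_0'=v_0$ solves a problem that need not be solved, and you give no mechanism for solving it. The Poincar\'e--Bendixson variant fails as described: the region bounded by the arc from $(0,v_0)$ and its mirror image meets the $u$-axis at $\pm u_1$ with $u_1>\lambda^{\frac{1}{p-1}}$, so it contains not only the source $(\lambda^{\frac{1}{p-1}},0)$ but also the saddle $(0,0)$ and the sink $(-\lambda^{\frac{1}{p-1}},0)$; in such a region the $\omega$-limit set may be an equilibrium or a saddle connection, and no closed orbit is guaranteed. (Besides, if that outer boundary really is a trajectory arc together with its reflection, it already \emph{is} the periodic orbit, by the uniqueness argument above.)

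The step you rightly call the crux --- that the forward orbit from $(0,v_0)$ actually reaches the negative $v$-axis --- is left unproved in your proposal, and it is where the real work lies. Lemma \ref{bounded_lem} takes the orbit to the curve $\{v=|u|^{p-1}-\lambda\}$, and Equations \ref{variation} and \ref{convexity} then force a downward crossing of the $u$-axis; note this crossing occurs at some $u_1>\lambda^{\frac{1}{p-1}}$, not at $u_2\in(0,\lambda^{\frac{1}{p-1}})$ as you assert, since at $(u,0)$ the field equals $(0,-u(u^{p-1}-\lambda))$ and points downward precisely when $u>\lambda^{\frac{1}{p-1}}$ (likewise, on $\{u=0\}$ the field $(v,0)$ is horizontal, not vertical). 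For the descent through $\{u>0,\ v<0\}$ back to the $v$-axis the paper runs the same analysis on the time-reversed system: a reverse trajectory launched from $(0,v_2)$ with $v_2<-\lambda$ reaches the $u$-axis at some $u_0>\lambda^{\frac{1}{p-1}}$, which, read forwards, says the orbit through such a point descends to the negative $v$-axis; gluing the two half-arcs at a common point of the $u$-axis gives an orbit from $(0,v_0)$ to $(0,v_3)$, and the symmetry closes it. Without this (or an equivalent estimate) your orbit could a priori turn back upward and recross the $u$-axis at some $u<\lambda^{\frac{1}{p-1}}$, or escape to $v=-\infty$, and nothing in your proposal excludes either.
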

\begin{proof}
Thanks to the symmetry (see Lemma \ref{sym_lem}), we just need to prove that for some initial data belonging to $\{0\} \times (0,\infty)$, there exists a trajectory which attains a point belonging to $\{0\} \times (-\infty,0)$. First, consider a trajectory $(u,v)$ initiated at $(0,v_1)$ with $v_1>0$. According to hypothesis (\ref{p_3}), we know that $(u,v)$ is bounded, and using its variations and its convexity (Equations (\ref{variation}) and (\ref{convexity})), we can deduce that $(u,v)$ attains the $x-$axis at a point $(u_1,0)$ with $u_1 > \lambda^\frac{1}{p-1}$ (see Figure \ref{phase1}). Then, using the reverse system
\begin{eqnarray*}
\left( \begin{array}{c}
u'  \\
v'
\end{array} \right)   
=
\left( \begin{array}{c}
-v\\
-uv + u|u|^{p-1}-\lambda u \end{array} \right) ,
\end{eqnarray*}
and one of its trajectories initiated at $(0,v_2)$ with $v_2<-\lambda)$ (trajectories of reverse system and of System (\ref{Bstat_sys}) have same support), one can note that for $u_0>\lambda^\frac{1}{p-1}$, there exists a trajectory $(w,z)$ of (\ref{Bstat_sys}) with $w(0)=u_0$ and $z(0)=0$ (see Figure \ref{phase1}). Finally, let us consider the trajectory $(a,b)$ of System (\ref{Bstat_sys}) containing the point $(u_2,0)$, where $ u_2 > \max \{ u_0, u_1\}$. Thanks to the uniqueness of the solutions, and using the information on the variations and the convexity, we deduce that there exist two real numbers $s<t$ such that $a(s)= a(t) =0$, $b(s)= v_0$ and $b(t) =v_3$ (see Figure \ref{phase1}). Thus, the trajectory $(a,b)$ is the periodic trajectory we look for.
\end{proof}

Now, analysing the phase plane of the System (\ref{Bstat_sys}), we deduce the following results concerning the Equation (\ref{Bstat_param}).

\begin{theorem}\label{sol+} 
Assume hypothesis (\ref{p_3}) and $\lambda>0$. For each boundary conditions
\begin{itemize}
\item $u(-\alpha)=u(\alpha)=0$ (Dirichlet b.c.) ,
\item $u'(-\alpha)=u'(\alpha)=0$ (Neumann b.c.) ,
\item $u(-\alpha)=u'(\alpha)=0$ (mixed$-1$ b.c.), 
\item $u'(-\alpha)=u(\alpha)=0$ (mixed$-2$ b.c.), 
\end{itemize}
there exists a positive solution of the Equation (\ref{Bstat_param})
\begin{displaymath}
u'' -uu'+u|u|^{p-1}-\lambda u = 0 \textrm{ in } (-\alpha,\alpha) \textrm{ for some }\alpha>0.
\end{displaymath}
\end{theorem}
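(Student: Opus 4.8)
The plan is to read off all four solutions from the phase portrait of System \ref{Bstat_sys} established in Lemmas \ref{sym_lem}, \ref{bounded_lem} and \ref{lem-per} (Figure \ref{phase1}). The key observation is that a positive solution $u$ of the stationary equation on $(-\alpha,\alpha)$ is exactly a trajectory $(u,u')$ of System \ref{Bstat_sys} whose image lies in the open half-plane $\{u>0\}$, with the possible exception of the two endpoints, and which is traversed over a parameter interval of length $2\alpha$; the four boundary conditions translate into constraints on the two endpoints of this trajectory arc -- both on $\{u=0\}$ for Dirichlet, both on $\{v=0\}$ for Neumann, and one on each of $\{u=0\}$, $\{v=0\}$ for the mixed conditions (with the sign of $v$ at an endpoint on $\{u=0\}$ fixed by the requirement that $u>0$ just inside). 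So the whole problem reduces to producing, for each boundary condition, one trajectory arc in $\overline{\{u>0\}}$ with the prescribed endpoints and finite transit time, and then setting $\alpha$ to be half of that time.

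For existence I would use the periodic orbit $\Gamma$ furnished by Lemma \ref{lem-per}. By Lemma \ref{sym_lem} (and the uniqueness argument inside the proof of Lemma \ref{lem-per}) $\Gamma$ is symmetric with respect to the $v$-axis and meets $\{u=0\}$ in exactly two points $(0,v_0)$ with $v_0>0$ and $(0,v_3)$ with $v_3<0$; by the sign information \ref{variation}--\ref{convexity} its right half $\Gamma^+=\Gamma\cap\overline{\{u\ge0\}}$ runs from $(0,v_0)$ through a single point $(u_1,0)$ with $u_1>\lambda^{1/(p-1)}$ down to $(0,v_3)$, with $u$ increasing on the part of $\Gamma^+$ in $\{v>0\}$ and decreasing on the part in $\{v<0\}$. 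Then the whole of $\Gamma^+$ gives the Dirichlet solution, the first-quadrant sub-arc $(0,v_0)\to(u_1,0)$ gives mixed$-1$, and the fourth-quadrant sub-arc $(u_1,0)\to(0,v_3)$ gives mixed$-2$; each of these arcs is compact and misses every equilibrium, hence is traversed in finite time, and the corresponding $u$ is positive on the open interval. For Neumann, $\Gamma$ does not suffice ($\Gamma^+$ meets $\{v=0\}$ only once), so I would instead take the trajectory through a point $(u_2,0)$ with $u_2\in(0,\lambda^{1/(p-1)})$: there $v'=u_2(\lambda-u_2^{p-1})>0$, so it enters $\{u>0,v>0\}$, stays in $\{u>0\}$ while $v>0$ because $u$ is increasing, and by Lemma \ref{bounded_lem} together with the sign of $\frac{dv}{du}$ below the curve $\{v=u^{p-1}-\lambda\}$ it comes back down and crosses $\{v=0\}$ again at a point $(u_1',0)$ with $u_1'>\lambda^{1/(p-1)}$; this arc is compact, misses every equilibrium and stays in $\{u>0\}$, so after translating its parameter interval to $[-\alpha,\alpha]$ it yields the desired positive Neumann solution.

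For uniqueness, fix $\alpha$ as just constructed and suppose $u,\tilde u$ are two positive solutions on $(-\alpha,\alpha)$ with the same boundary condition. Their trajectory arcs lie in $\{u>0\}$, obey the same endpoint constraints, and -- by uniqueness of solutions of System \ref{Bstat_sys} -- cannot intersect, hence are nested. The boundary condition together with the requirement that the transit time equal $2\alpha$ then pins down a single member of this nested family, provided the transit time is a strictly monotone function of the parameter labelling it (the ordinate $v$ of the upper endpoint in the Dirichlet and mixed$-1$ cases, the abscissa $u_2$ of the left endpoint in the Neumann and mixed$-2$ cases). Establishing this strict monotonicity is the step I expect to be the real obstacle: the outer arc of the family encloses strictly more area, and the energy identity $\frac{d}{dx}\big(\tfrac{(u')^2}{2}+\tfrac{u^{p+1}}{p+1}-\lambda\tfrac{u^2}{2}\big)=u(u')^2\ge 0$ on $\{u>0\}$ orders the arcs by ``energy'', but turning this into a strict comparison of transit times -- via a careful estimate of the speed $|(u',v')|$ along nested arcs, or by differentiating the return map -- is delicate, and may in fact require $\alpha$ to be chosen in a suitable range rather than arbitrarily.
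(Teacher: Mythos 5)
Your existence construction is essentially the paper's: the paper reads the Dirichlet, mixed$-1$ and mixed$-2$ solutions off the arcs of the trajectory $(a,b)$ joining $(0,v_0)$, $(u_2,0)$ and $(0,v_3)$ in Figure \ref{phase1} (i.e.\ the right half of the periodic orbit from Lemma \ref{lem-per}), and obtains the Neumann solution exactly as you do, from a trajectory started at $(\mu_0,0)$ with $0<\mu_0<\lambda^{1/(p-1)}$, which is trapped by the outer orbit and must return to the $u$-axis at some $(\mu_1,0)$ with $\mu_1>\lambda^{1/(p-1)}$. On uniqueness you are in fact more careful than the paper, which disposes of it in one sentence (``standard ODE's theorems applied to the System \ref{Bstat_sys}''), i.e.\ uniqueness of the trajectory through a given phase point; it takes $\alpha$ to be defined a posteriori as half the transit time of the constructed arc, so ``unique'' is most charitably read as uniqueness of the solution associated with that arc. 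The issue you raise --- that for a \emph{fixed} $\alpha$ there could a priori be several nested arcs with the same endpoints-type and the same transit time $2\alpha$, so that genuine BVP uniqueness needs strict monotonicity of the transit time along the nested family --- is a real one, and the paper does not address it either. So your proposal matches the paper's argument everywhere the paper actually gives one, and the step you flag as delicate is precisely the step the paper elides; you should not feel obliged to close it to reproduce the paper's proof, but you are right that the theorem as literally stated would require it.
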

\begin{proof}
We use the phase plane of System (\ref{Bstat_sys}), see Figure \ref{phase1}. Consider the trajectory $(a,b)$ between the points 
\begin{itemize}
\item $(0,v_0)$ and $(0,v_3)$: we obtain the Dirichlet solution,
\item $(0,v_0)$ and $(u_2,0)$: we obtain the mixed$-1$ solution,
\item $(u_2,0)$ and $(0,v_3)$: we obtain the mixed$-2$ solution.
\end{itemize}
For the Neumann solution, consider $0<\mu_0<\lambda^\frac{1}{p-1}$ and the trajectory $(\mu,\nu)$ of System (\ref{Bstat_sys}) initiated at $(\mu_0,0)$. Since $(\mu,\nu)$ can not cross the trajectory $(u,v)$ ( see Figure \ref{phase1}), it must cross the $x-$axis at $(\mu_1,0)$ with $\lambda^\frac{1}{p-1}<\mu_1<u_1$. Thus, the abscissa of this trajectory is the Neumann solution we look for. Finally, the length ($2\alpha$) of the existence interval is governed by the time needed by the trajectory to go from its initial data to its ``final data''.
\end{proof}

\begin{theorem}
Assume hypothesis (\ref{p_3}) and $\lambda>0$. For some $\alpha>0$, there exists a periodic sign-changing solution of the Equation (\ref{Bstat_param})
\begin{displaymath}
u'' -uu'+u|u|^{p-1}-\lambda u = 0 \textrm{ in } \mathbb{R} .
\end{displaymath}
\end{theorem}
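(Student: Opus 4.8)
The plan is to leverage the periodic trajectory already constructed in Lemma \ref{lem-per}. By that lemma, under hypothesis \ref{p_3} and $\lambda > 0$, there exists a periodic orbit of the System \ref{Bstat_sys}; call it $\gamma$, and let $(a,b)$ be a parametrization of the solution of System \ref{Bstat_sys} whose support is $\gamma$. Since $\gamma$ is a closed curve and the vector field is (locally Lipschitz, hence) uniquely integrable, $(a,b)$ is defined for all $x \in \mathbb{R}$ and is periodic: there exists $P > 0$ with $a(x+P) = a(x)$ and $b(x+P) = b(x)$ for all $x$. Setting $\alpha = P/2$ (or simply taking any half-period), the first component $a$ is then a function of class $\mathcal{C}^2(\mathbb{R})$ — in fact $\mathcal{C}^\infty$ — that solves $a' = b$, $b' = ab - a|a|^{p-1} + \lambda a$, and eliminating $b$ gives exactly $a'' - a a' + a|a|^{p-1} - \lambda a = 0$ on all of $\mathbb{R}$.

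First I would recall why $(a,b)$ is globally defined: the orbit $\gamma$ produced by Lemma \ref{lem-per} is compact (it lies inside the bounded region controlled in Lemma \ref{bounded_lem} together with its mirror image under Lemma \ref{sym_lem}), so the solution cannot blow up in finite time and extends to $\mathbb{R}$. Next I would argue periodicity: a trajectory whose support is a closed curve not containing an equilibrium must be periodic — this is the standard fact that the only non-constant orbits with compact closure and no equilibria in their omega-limit are periodic, and here the closed curve $\gamma$ passes through points like $(u_2,0)$ with $u_2 > \lambda^{1/(p-1)} > 0$, so it avoids all three equilibria $(0,0)$, $(\pm\lambda^{1/(p-1)},0)$. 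Hence there is a minimal period $P>0$.

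Then I would verify the sign-changing claim: along $\gamma$ the function $a$ takes a positive maximum $u_2$ on the $x$-axis and, by the symmetry of Lemma \ref{sym_lem}, the orbit also meets the negative $u$-axis, so $a$ attains a negative value as well; therefore $a$ genuinely changes sign on each period. Taking $\alpha$ to be any convenient positive number (e.g. $P$, or the location of the first sign change) completes the statement, since ``in $\mathbb{R}$'' only requires the solution to be globally defined, which periodicity guarantees. Finally, regularity: since the right-hand side of System \ref{Bstat_sys} is $\mathcal{C}^1$ in $(u,v)$ (the map $u \mapsto u|u|^{p-1}$ is $\mathcal{C}^1$ for $p>1$), a bootstrap argument shows $a \in \mathcal{C}^2(\mathbb{R})$, which is exactly the notion of solution in the Definition.

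**The main obstacle** is essentially bookkeeping rather than a deep point: one must be slightly careful in justifying that the closed curve $\gamma$ from Lemma \ref{lem-per} — which was described there as the concatenation of several trajectory arcs of the forward and reverse systems — is actually the support of a \emph{single} periodic solution of System \ref{Bstat_sys}, and that passing between the forward and reverse systems does not introduce any orientation ambiguity. This follows because the reverse system has the same orbits as System \ref{Bstat_sys}, and uniqueness of trajectories forces the pieced-together arcs to coincide with one genuine orbit of System \ref{Bstat_sys}; I would make this explicit by invoking uniqueness (as in the proof of Theorem \ref{sol+}) to identify the glued curve with the orbit through $(u_2,0)$ and then closing it up via the symmetry lemma.
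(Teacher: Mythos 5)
Your proposal is correct and takes essentially the same route as the paper, which simply selects one of the periodic trajectories built in Lemma \ref{lem-per} and reads off its first component; your additional justifications (global definition via compactness of the closed orbit, sign change via the symmetry of Lemma \ref{sym_lem}, and the identification of the glued forward/reverse arcs with a single orbit) fill in details the paper leaves implicit.
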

\begin{proof}
We just need to choose one of the periodic trajectories of the System (\ref{Bstat_sys}) built in Lemma \ref{lem-per}.
\end{proof}

\begin{remark}
Using the periodic solutions in the previous theorem, and restricting them to some suitable subintervals (non-trivial), we can build four sign-changing solutions satisfying the four boundary conditions: Dirichlet, Neumann, mixed$-1$ and mixed$-2$ (see Theorem \ref{sol+}). 
\end{remark}

Now, suppose that hypothesis (\ref{p_3}) is not achieved. Then, we do not know if the solutions are bounded in $\{v>|u|^{p-1} -\lambda \}$: we will see in \S\ref{unbounded} that unbounded solutions appear. But in $\{v<|u|^{p-1} -\lambda \}$, the behaviour of the trajectories do not change.

\begin{theorem}\label{p<3_sol+}
Let $\lambda>0$. For some $\alpha>0$, there exists a positive solution of the Equation (\ref{Bstat_param})
\begin{displaymath}
u'' -uu'+u|u|^{p-1}-\lambda u = 0 \textrm{ in } (-\alpha,\alpha) 
\end{displaymath}
with the mixed boundary conditions $u'(-\alpha)=u(\alpha)=0$. In addition, if 
\begin{equation}\label{lam-vortex}
1-4(p-1)\lambda^\frac{p-3}{p-1}<0 ,
\end{equation}
then there exists a positive solution of the Equation (\ref{Bstat_param}) under the Neumann boundary conditions.
\end{theorem}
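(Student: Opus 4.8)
The plan is to reuse the phase-plane constructions already carried out for Theorem~\ref{sol+} and Lemma~\ref{lem-per}, but to confine every trajectory we need to one of the two loci where the picture of Figure~\ref{phase1} survives the failure of hypothesis~\ref{p_3}: the half-plane $\{v<|u|^{p-1}-\lambda\}$ for the mixed$-2$ solution, and a small neighbourhood of the equilibrium $(\lambda^{1/(p-1)},0)$ for the Neumann solution. In both cases uniqueness will follow, exactly as in Theorem~\ref{sol+}, from the uniqueness of solutions of the initial value problem for System~\ref{Bstat_sys}, so I discuss only existence.

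For the mixed$-2$ conditions I would mimic the reverse-system argument in the proof of Lemma~\ref{lem-per}. Fix $v_2<-\lambda$ with $|v_2|$ large and follow the trajectory of the reverse system starting at $(0,v_2)$: there $u'=-v>0$, so $u$ increases, and near the starting point $(u,v)$ lies in $\{u\ge0,\,v<0,\,v<|u|^{p-1}-\lambda\}$, where by Equations~\ref{variation} and \ref{convexity} one has $\frac{dv}{du}>0$ and $\frac{d^2v}{du^2}>0$, facts that do not rely on hypothesis~\ref{p_3}. A slope comparison with the curve $\{v=|u|^{p-1}-\lambda\}$, on which the trajectory has slope $0$ while the curve has positive slope, shows the trajectory cannot leave $\{v<|u|^{p-1}-\lambda\}$ through that curve. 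While $u<\lambda^{1/(p-1)}$ one gets $\frac{dv}{du}<u$, hence $v\le v_2+\frac{1}{2}u^2$; choosing $|v_2|>\frac{1}{2}\lambda^{2/(p-1)}$ makes $v$ still negative when $u$ reaches $\lambda^{1/(p-1)}$ and rules out the trajectory creeping into the equilibrium, so $u$ does reach $\lambda^{1/(p-1)}$; and for $u>\lambda^{1/(p-1)}$ one has $\frac{dv}{du}>u\ge\lambda^{1/(p-1)}>0$, so $v$ climbs to $0$ at a finite abscissa $u_0>\lambda^{1/(p-1)}$, in finite time because $\frac{dv}{du}$ blows up like $(u_0-u)^{-1/2}$ there. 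Reversing time, the trajectory of System~\ref{Bstat_sys} from $(u_0,0)$ to $(0,v_2)$ has $u'=v=0$ at its left end, $u=0$ at its right end, and, because $u'=v<0$ on $(-\alpha,\alpha)$ makes $u$ strictly decreasing from $u_0$ to $0$, stays in $\{u>0\}$ in between; its first coordinate is the positive solution we want on an interval $(-\alpha,\alpha)$, $2\alpha$ being the travel time.

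For the Neumann conditions under hypothesis~\ref{lam-vortex}, I would first observe that the linearization of System~\ref{Bstat_sys} at $(\lambda^{1/(p-1)},0)$ has trace $\lambda^{1/(p-1)}>0$, determinant $(p-1)\lambda>0$, and discriminant $\lambda^{2/(p-1)}-4(p-1)\lambda$, whose negativity is precisely condition~\ref{lam-vortex}; thus \ref{lam-vortex} says exactly that this equilibrium is an unstable focus. Then I would pick $\mu_0<\lambda^{1/(p-1)}$ so close to $\lambda^{1/(p-1)}$ that the trajectory through $(\mu_0,0)$ stays in a neighbourhood of the focus where the spiral behaviour holds; since $v'=\mu_0(\lambda-\mu_0^{p-1})>0$ on the $u$-axis to the left of the focus, this trajectory leaves $(\mu_0,0)$ upward, and because $u'=v$ makes the flow rotate clockwise it performs one half-turn over the focus and returns to the $u$-axis at a point $(\mu_1,0)$ with $\mu_1>\lambda^{1/(p-1)}$, the connecting arc remaining in $\{u>0,\,v\ge0\}$ near $(\lambda^{1/(p-1)},0)$. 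Its first coordinate has vanishing derivative at both ends, is positive in between, and is the desired Neumann solution on some $(-\alpha,\alpha)$; because the arc never leaves a small neighbourhood of the focus, the possible unbounded trajectories of \S\ref{unbounded} are irrelevant here, which is why only \ref{lam-vortex}, and not \ref{p_3}, is needed for this assertion.

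The main obstacle, and the only place the argument is not a transcription of the proof of Theorem~\ref{sol+}, is justifying the two ``return to the axis'' statements without the global boundedness provided by Lemma~\ref{bounded_lem}: for mixed$-2$, that the reverse trajectory from $(0,v_2)$ genuinely reaches $\{v=0\}$ in finite time instead of running off to $u=+\infty$ or converging to $(\lambda^{1/(p-1)},0)$, which is where the parabola bound $v\le v_2+\frac{1}{2}u^2$ and the blow-up of $\frac{dv}{du}$ near the axis enter; and for Neumann, that a trajectory started near an unstable focus really makes a half-loop back to the $u$-axis on the opposite side, a standard property of foci that I would verify in local polar coordinates centred at $(\lambda^{1/(p-1)},0)$ or quote from \cite{Am}. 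Once these points are settled, uniqueness follows as in Theorem~\ref{sol+} from standard ODE uniqueness for System~\ref{Bstat_sys}.
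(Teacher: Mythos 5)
Your proposal is correct and follows essentially the same route as the paper: the mixed$-2$ solution is obtained from the arc of a trajectory lying entirely in $\{v<|u|^{p-1}-\lambda\}$, where the phase portrait is unaffected by the failure of hypothesis \ref{p_3}, and the Neumann solution comes from the half-turn of a trajectory spiralling around the unstable vortex $(\lambda^{1/(p-1)},0)$, condition \ref{lam-vortex} being exactly the negativity of the discriminant of the linearization. You merely supply more detail than the paper (the parabola bound $v\le v_2+\tfrac12 u^2$ confining the reverse trajectory, and the explicit trace/determinant computation), which is consistent with, not divergent from, the published argument.
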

\begin{proof}
The first part of the statement comes from Theorem \ref{sol+}, the solution with mixed$-2$ boundary conditions is located in $\{v<|u|^{p-1} -\lambda \}$. The other part stems from Equation (\ref{lam-vortex}): in this case, the equilibrium $(\lambda^\frac{1}{p-1},0)$ is an unstable vortex. If we consider $u_0>0$ such that $|\lambda^\frac{1}{p-1}-u_0|$ is sufficiently small, the trajectory $(u,v)$ of the System (\ref{Bstat_sys}), with $u(0)=u_0$ and $v(0)=0$, whirls around $(\lambda^\frac{1}{p-1},0)$. Thus, there exists $\tau>0$ such that $v(\tau)=0$ and $u(t)>0$ for all $t\in [0,\tau]$.
\end{proof}

Without hypothesis (\ref{p_3}), we can not construct positive solutions anymore for the Dirichlet, Neumann or mixed$-1$ boundary conditions. If we do not impose the positivity, we obtain this result:

\begin{theorem}
Let $\lambda>0$. For each boundary conditions
\begin{itemize}
\item $u'(-\alpha)=u'(\alpha)=0$ (Neumann b.c.) ,
\item $u(-\alpha)=u'(\alpha)=0$ (mixed$-1$ b.c.), 
\end{itemize}
there exists a solution of the Equation (\ref{Bstat_param})
\begin{displaymath}
u'' -uu'+u|u|^{p-1}-\lambda u = 0 \textrm{ in } (-\alpha,\alpha) \textrm{ for some }\alpha>0.
\end{displaymath}
\end{theorem}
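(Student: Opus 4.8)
The plan is to derive both solutions from the positive mixed$-2$ solution already constructed in Theorem~\ref{p<3_sol+}, using only the reflection symmetry of Lemma~\ref{sym_lem} together with uniqueness for the ODE. What makes this possible without hypothesis~\ref{p_3} is that the mixed$-2$ solution is carried by a trajectory of System~\ref{Bstat_sys} lying entirely in the half-plane $\{v<|u|^{p-1}-\lambda\}$, where the variation and convexity analysis leading to Figure~\ref{phase1} never uses \ref{p_3}; hence that solution exists for every $p>1$. Note also that the transformation respecting Equation~\ref{Bsat_param} is $u\mapsto -u(-\cdot)$, not $u\mapsto -u$, because \ref{Bsat_param} is invariant under $x\mapsto -x,\ u\mapsto -u$ but is not odd in $u$; this is exactly why positivity is lost, in accordance with the statement claiming only ``a solution''.

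By Theorem~\ref{p<3_sol+} there are $\alpha>0$ and a solution $u_0$ of the equation $u''-uu'+u|u|^{p-1}-\lambda u=0$ on $(-\alpha,\alpha)$ with $u_0'(-\alpha)=0$, $u_0(\alpha)=0$ and $u_0>0$ on $[-\alpha,\alpha)$; in the phase plane this is the trajectory running from $(u_2,0)$, where $u_2:=u_0(-\alpha)>0$, to $(0,v_3)$ inside $\{u>0,\ v<|u|^{p-1}-\lambda\}$ (see Figure~\ref{phase1}). Since $(0,0)$ is an equilibrium of System~\ref{Bstat_sys}, we must have $u_0'(\alpha)\neq0$ (otherwise $u_0\equiv0$); and as $u_0$ decreases to $0$ at $\alpha$, the value $v_3:=u_0'(\alpha)$ is negative.

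For the mixed$-1$ boundary conditions, set $\tilde u(x):=-u_0(-x)$ on $[-\alpha,\alpha]$. By Lemma~\ref{sym_lem}, $\tilde u$ solves the same equation, and $\tilde u(-\alpha)=-u_0(\alpha)=0$ while $\tilde u'(\alpha)=u_0'(-\alpha)=0$; thus $\tilde u$ is a solution satisfying $u(-\alpha)=u'(\alpha)=0$ (it is non-positive on $[-\alpha,\alpha]$). For the Neumann boundary conditions, let $\gamma$ be the maximal trajectory of System~\ref{Bstat_sys} through the point $(0,v_3)$. Since the right-hand side of \ref{Bstat_sys} is locally Lipschitz and $v_3\neq0$, $\gamma$ is the unique trajectory through $(0,v_3)$, hence by Lemma~\ref{sym_lem} it is symmetric with respect to the ordinates axis: on one side of its crossing of $\{u=0\}$ it coincides with the trajectory of $u_0$ and reaches $(u_2,0)$, and on the other side it coincides with the mirror trajectory and reaches $(-u_2,0)$, both arcs staying in $\{v<|u|^{p-1}-\lambda\}$. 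The corresponding solution lives on some interval $(-\beta,\beta)$ with $u(\pm\beta)=\pm u_2$, and since $(\pm u_2,0)$ lie on the $x$-axis we get $u'(\pm\beta)=0$. This is the desired (sign-changing) Neumann solution.

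The only place needing real care is the input just used: one must check that the mixed$-2$ trajectory of Theorem~\ref{p<3_sol+} truly remains inside $\{v<|u|^{p-1}-\lambda\}$ all the way from $(u_2,0)$ to $(0,v_3)$, so that the whole construction is genuinely insensitive to hypothesis~\ref{p_3}. Granting that, the mixed$-1$ and Neumann constructions are formal consequences of the reflection symmetry and ODE uniqueness, needing no new estimates; in particular one never has to revisit the possibly unbounded trajectories in $\{v>|u|^{p-1}-\lambda\}$, which is precisely where \ref{p_3} entered the earlier theorems.
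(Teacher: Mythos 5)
Your proof is correct and is essentially the paper's own argument: the paper also takes the arc of the trajectory through $(u_2,0)$, $(0,v_3)$ and $(-u_2,0)$ in the region $\{v<|u|^{p-1}-\lambda\}$, using the symmetry of Lemma \ref{sym_lem}, to get the sign-changing Neumann solution and the negative mixed$-1$ solution. You merely make explicit the gluing via ODE uniqueness at $(0,v_3)$ that the paper leaves to Figure \ref{phase1}.
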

\begin{proof}
As we mentioned before, we consider the part $\{v<|u|^{p-1} -\lambda \}$ of the phase plane of the System (\ref{Bstat_sys}) (see Figure \ref{phase1}). For the Neumann solution, we consider the trajectory $(a,b)$ between $(u_2,0)$ and $(-u_2,0)$. For the mixed$-1$ solution, we can also consider the trajectory $(a,b)$, but only between $(0,v_3)$ and $(-u_2,0)$.
\end{proof}

\begin{remark}
The Neumann solution built above is sign changing, whereas the mixed$-1$ solution is negative.
\end{remark}

\begin{remark}
In the general case, we can not build any solution with the Dirichlet boundary conditions using our phase plane method. Indeed, we will give a criterion in Theorem \ref{thm_nonborne2} concerning nonexistence of the Dirichlet solution.
\end{remark}

Concerning the solutions in infinite interval, we can state:

\begin{theorem}
Let $\lambda>0$. Then the Equation (\ref{Bstat_param})
\begin{displaymath}
u'' -uu'+u|u|^{p-1}-\lambda u = 0  
\end{displaymath}
admits
\begin{itemize}
\item a positive solution $u$ in $(-\infty,0]$ satisfying $u'(-\infty)=u'(0)=0$ (Neumann).
\item a positive solution $v$ in $(-\infty,0]$ satisfying $v'(-\infty)=v(0)=0$ (mixed$-2$). 
\item a sign-changing solution $w$ in $\mathbb{R}$ satisfying $w'(-\infty)=w'(\infty)=0$ (Neumann).
\item a negative solution $u$ in $[0,\infty)$ satisfying $z(0)=z'(\infty)=0$ (mixed$-1$).
\end{itemize}
\end{theorem}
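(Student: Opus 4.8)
The plan is to realize each of the four solutions as a trajectory of System \ref{Bstat_sys} in the phase plane of Figure \ref{phase1}. The guiding principle is that a boundary condition $u'(\pm\infty)=0$ forces the corresponding trajectory to converge, as $x\to\pm\infty$, to an equilibrium lying on $\{v=0\}$, i.e.\ to one of $(0,0)$, $(c,0)$ or $(-c,0)$ with $c:=\lambda^{1/(p-1)}$; recall $(c,0)$ is repulsive, $(-c,0)$ is attractive, and $(0,0)$ is a saddle. I would also exploit Lemma \ref{sym_lem}: the reflection $(u,v)\mapsto(-u,v)$ sends trajectories to trajectories, so that if $\phi$ solves the equation on $(-\infty,0]$ with $\phi'(-\infty)=\phi(0)=0$ then $z(x):=-\phi(-x)$ solves it on $[0,\infty)$ with $z(0)=z'(\infty)=0$. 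Hence the fourth (negative, mixed$-1$) solution follows at once from the second (positive, mixed$-2$), and only three genuine constructions are needed.

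For the two positive solutions on $(-\infty,0]$ I would follow the branch $\gamma$ of the unstable manifold of the saddle $(0,0)$ that enters the quadrant $\{u>0,\,v>0\}$ (equivalently, when $(c,0)$ is a vortex, a trajectory spiralling out of $(c,0)$). By construction $u\to 0$, $v\to 0$ along $\gamma$ as $x\to-\infty$, so $u'(-\infty)=0$, and $\gamma$ starts in the region $A$ of Lemma \ref{bounded_lem}. Using the sign of $dv/du$ and the convexity relation (Equations \ref{variation}--\ref{convexity}), together with Lemma \ref{bounded_lem} when $p\geq 3$ to keep $\gamma$ in a compact set, one shows that $\gamma$ leaves $A$ across the curve $\{v=|u|^{p-1}-\lambda\}$ at some $u^{\dagger}>c$ and then reaches $\{v=0\}$ at a point $(\tilde u,0)$ with $\tilde u>c>0$ while staying in $\{u>0\}$: truncating at $(\tilde u,0)$ and translating so this occurs at $x=0$ gives the positive Neumann solution on $(-\infty,0]$. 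Continuing $\gamma$ past $(\tilde u,0)$ it enters $\{u>0,\,v<0,\,v<|u|^{p-1}-\lambda\}$, where $dv/du>0$ and the orbit is convex, so $u$ and $v$ decrease monotonically; a short estimate (using that $dv/du\to 0$ as $u\to 0$) shows $u$ reaches $0$ in finite ``time'' at a point $(0,v^{*})$ with $v^{*}<0$. Truncating there and translating yields the positive mixed$-2$ solution $\phi$ on $(-\infty,0]$ with $\phi'(-\infty)=\phi(0)=0$.

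For the sign-changing solution on $\mathbb{R}$ I would not truncate but let $\gamma$ run on. Having crossed $\{u=0\}$ transversally at $(0,v^{*})$ with $v^{*}\neq 0$, $\gamma$ enters $\{u<0\}$; by Lemma \ref{sym_lem} the reflected orbit $x\mapsto(-u(-x),v(-x))$ is again a trajectory through $(0,v^{*})$, so by uniqueness it coincides with $\gamma$ after the time shift fixing that point, i.e.\ $u$ is odd and $v$ is even about it. Consequently $\gamma$ is a bounded orbit, defined on all of $\mathbb{R}$, converging to equilibria of $\{v=0\}$ at both ends (to $(0,0)$ at each end if $\gamma$ is the saddle separatrix, or to $(c,0)$ and $(-c,0)$ if $\gamma$ spirals out of $(c,0)$); its first coordinate $w$ changes sign exactly once and satisfies $w'(-\infty)=w'(+\infty)=0$, which is the third claim.

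The main obstacle is entirely global: everything rests on proving that $\gamma$ actually reaches $\{v=0\}$ and then $\{u=0\}$, rather than escaping to infinity in the region $\{v>|u|^{p-1}-\lambda\}$ or being trapped around $(c,0)$ by a homoclinic loop, and (for the $\mathbb{R}$ solution) that it is captured between two equilibria. For $p\geq 3$ the escape is excluded by Lemma \ref{bounded_lem}; for $1<p<3$ one must instead rely on the vortex regime of $(c,0)$ (condition \ref{lam-vortex}, which holds for $\lambda$ small) to force the orbit to spiral outward and cross the axes, and on the escape phenomenon of \S\ref{unbounded} being irrelevant to the part of the orbit lying in $\{v<|u|^{p-1}-\lambda\}$. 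Ruling out a homoclinic loop encircling $(c,0)$ and checking that no periodic orbit around $(-c,0)$ intercepts $\gamma$ first are the delicate points; the monotonicity and convexity bookkeeping in $\{v<|u|^{p-1}-\lambda\}$ and the symmetry of Lemma \ref{sym_lem} are precisely the tools that keep them under control.
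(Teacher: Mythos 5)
Your overall strategy is the paper's: build one master trajectory in the phase plane of System \ref{Bstat_sys}, truncate it at its successive crossings of the axes $\{v=0\}$ and $\{u=0\}$ to get the first two solutions, and obtain the third and fourth by the reflection of Lemma \ref{sym_lem} (your formula $z(x)=-\phi(-x)$ is literally the paper's). The difference is the choice of master orbit. You take the unstable separatrix of the saddle $(0,0)$, so your solutions satisfy $u(-\infty)=0$ and your sign-changing solution is a homoclinic loop at the origin. The paper instead takes an orbit emanating from the repulsive equilibrium $(\lambda^{1/(p-1)},0)$ as $x\to-\infty$, i.e.\ exactly the alternative you mention parenthetically as ``equivalent'' --- but the two choices are not equivalent, and the paper's is the cheaper one: an orbit leaving $(\lambda^{1/(p-1)},0)$ can be taken to return to $\{v=0\}$ at a point $\mu_0>\lambda^{1/(p-1)}$ arbitrarily close to the equilibrium (immediate in the vortex regime), after which it lives in the region $\{v<|u|^{p-1}-\lambda\}$ where the monotonicity/convexity bookkeeping of Equations \ref{variation}--\ref{convexity} settles everything for every $p>1$. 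Your separatrix, by contrast, must first traverse the region $A$, so you genuinely need Lemma \ref{bounded_lem} (hence $p\geq 3$) or the vortex condition \ref{lam-vortex} for $1<p<3$, and you must additionally exclude the scenario you yourself flag, namely that the separatrix is homoclinic \emph{within} $\{u\geq 0\}$ (returning to the origin along the stable direction in the fourth quadrant), in which case $u$ never reaches $0$ at finite $x$ and your mixed$-2$ truncation fails. Neither of these obstacles arises for the paper's orbit, which is why its proof can remain short; conversely, the paper's argument quietly relies on the local structure at $(\lambda^{1/(p-1)},0)$ (vortex or node) to produce the return to $\{v=0\}$, a point it does not justify in more detail than you do. Both routes yield valid solutions for the stated boundary conditions; they are simply different orbits with different limits at infinity.
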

\begin{proof}
Consider $\mu_0>0$ with $\mu_0>\lambda^\frac{1}{p-1}$ and with $|\lambda^\frac{1}{p-1}-\mu_0|$ small enough such that there exists a trajectory $(\mu,\nu)$ of the System (\ref{Bstat_sys}) satisfying
\begin{displaymath}
\mu(-\infty) =  \lambda^\frac{1}{p-1} , \ \nu(-\infty)=0 \ \textrm{ and } \ \mu(0) =  \mu_0 , \ \nu(0)=0.
\end{displaymath}
Since $(\lambda^\frac{1}{p-1},0)$ is repulsive, the existence of $(\mu,\nu)$ is clear.
Hence, $u=\mu$ in $(-\infty,0]$ is suitable for the first statement. Then, the trajectory $(\mu,\nu)$ can be continued in the part $\{ u>0, v<0\}$ using the information on its behaviour (see Equations (\ref{variation}) and (\ref{convexity})) until $(\mu,\nu)$ attains the ordinate axis. Denote $t_1>0$ the time such that $\mu(t_1)=0$ and $\nu(t_1)<0$. We obtain the second statement setting $v(t)=\mu(t+t_1)$ for all $t\in(-\infty,0]$. Finally, these results and the symmetry of the trajectories (see Lemma \ref{sym_lem}) imply the third and the fourth statements with the following definitions:
\begin{eqnarray*}
w(t) =\left\{
    \begin{array}{lll}
        v(t) & \forall & t \leq 0 \\
        -v(-t) & \forall & t>0
    \end{array} \textrm{ and } 
\right.
z(t) = -v(-t) \textrm{ for all } t\geq0 .
\end{eqnarray*}
\end{proof}

\subsection{Case $\lambda \leq 0$}
First note that the System (\ref{Bstat_sys}) has only one equilibrium point $(0,0)$. As in the previous case, we can reduce our phase plane analysis to the half-plane $\mathbb{R}^+\times \mathbb{R}$ since Lemma \ref{sym_lem}. Again, we obtain some information on the variations of the trajectories of the System (\ref{Bstat_sys}) using Equation (\ref{variation}). We have $\frac{dv}{du} =0$ along the curves $\{u=0\}$ and $\{v=|u|^{p-1}-\lambda\}$. For $u>0$
\begin{displaymath}
\frac{dv}{du} \Bigg|_{v=0}= - \infty
\end{displaymath}
whereas for $u <0$
\begin{displaymath}
\frac{dv}{du} \Bigg|_{v=0}= +\infty .
\end{displaymath}
Then, we have $\frac{dv}{du} \geq 0$ in $\{ u>0, v<0\} \cup \{ v\geq|u|^{p-1}-\lambda \} $ and $\frac{dv}{du} \leq 0$ in $\{ u>0, v>0, v\leq |u|^{p-1}-\lambda\}$. In addition, thanks to Equation (\ref{convexity}), we know that $\frac{d^2v}{du^2} \leq 0$ in $\{ u>0, v>0, v\leq|u|^{p-1}-\lambda \} $, $\frac{d^2v}{du^2} \geq 0$ in $\{ u>0, v<0\}$ while it is sign-changing in $\{u>0, v\geq|u|^{p-1}-\lambda \} $. In this last part of the plane, we use the following lemma, similar to Lemma \ref{bounded_lem}:

\begin{lemma}\label{bounded_neg}
Let $\lambda \leq 0$ and $(u,v)$ be a trajectory of the System (\ref{Bstat_sys}) with initial data $(0,v_0)$. If $v_0>-\lambda$ satisfies
\begin{equation}\label{v0max}
\left\{
   						 \begin{array}{ll}
        				v_0 >-\lambda & \textrm{ if } p \geq 3 \ ,\\
        				v_0 \leq -\lambda +(p-1)^\frac{p-1}{3-p} -\frac{1}{2}(p-1)^\frac{2}{3-p} & \textrm{ if } p< 3 \ ,
    						\end{array}
			  \right.
\end{equation}
then the trajectory $(u,v)$ is bounded in $A=\{u>0, v\geq |u|^{p-1}-\lambda \}$.
\end{lemma}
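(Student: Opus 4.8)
The plan is to follow closely the argument of Lemma~\ref{bounded_lem}: squeeze the part of the trajectory lying in $A$ between its lower bounding curve $\{v=|u|^{p-1}-\lambda\}$ and an explicit parabola $\{v=\tfrac12u^{2}+v_{0}\}$, and use hypothesis~(\ref{v0max}) precisely to guarantee that these two curves meet for some $u>0$.

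First I would note that on $A\cap\{u>0\}$ one has $v\ge u^{p-1}-\lambda>-\lambda\ge 0$, so $v>0$ there. From Equation~\ref{variation},
\[
\frac{dv}{du}=\frac{u}{v}\,(v-u^{p-1}+\lambda)=u+\frac{u(\lambda-u^{p-1})}{v}.
\]
The first expression is $\ge 0$ on $A$ (where $v-u^{p-1}+\lambda\ge 0$), so the ordinate of the trajectory is a non-decreasing function of its abscissa along its piece in $A$; the second expression is $\le u$, because $\lambda\le 0<u^{p-1}$. Integrating $\frac{dv}{du}\le u$ from the initial abscissa $u=0$, where $v=v_{0}$, gives $v\le\tfrac12u^{2}+v_{0}$ as long as $(u,v)\in A$.

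Next I would compare the two curves via $g(u)=(u^{p-1}-\lambda)-\big(\tfrac12u^{2}+v_{0}\big)$, $u\ge 0$. They intersect at some $u^{\ast}>0$ exactly when $g$ has a positive zero, and then, while it stays in $A$, the trajectory is confined to the compact set $\{\,0\le u\le u^{\ast},\ u^{p-1}-\lambda\le v\le\tfrac12u^{2}+v_{0}\,\}$, which is the claimed boundedness. Now $g(0)=-(v_{0}+\lambda)<0$ because $v_{0}>-\lambda$. If $p\ge 3$, then $u^{p-1}$ dominates $\tfrac12u^{2}$, so $g(u)\to+\infty$ and a positive zero exists for every admissible $v_{0}$. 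If $1<p<3$, then $g$ increases on $(0,u_{m})$ and decreases on $(u_{m},\infty)$ with $u_{m}=(p-1)^{1/(3-p)}$, hence $g$ has a positive zero if and only if $g(u_{m})\ge 0$; a direct evaluation gives $g(u_{m})=(p-1)^{(p-1)/(3-p)}-\tfrac12(p-1)^{2/(3-p)}-(v_{0}+\lambda)$, so $g(u_{m})\ge 0$ is precisely the second line of~(\ref{v0max}).

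Finally, as in Lemma~\ref{bounded_lem}, since $u'=v>0$ in $A$ the abscissa is strictly increasing and bounded by $u^{\ast}$, the vector field does not vanish in the interior of $A$, and uniqueness of trajectories then forces the solution to hit $\{v=u^{p-1}-\lambda\}$ in finite time, i.e.\ to leave $A$; in particular its trace in $A$ is bounded. I expect the only genuinely computational point to be the evaluation of $g$ at $u_{m}$ and the recognition that the resulting constant is the one in~(\ref{v0max}); everything else is the qualitative picture of Figure~\ref{phase1} read off from Equations~\ref{variation} and~\ref{convexity}.
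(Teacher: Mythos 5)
Your proof is correct and follows essentially the same route as the paper: bound the trajectory's piece in $A$ between the curve $\{v=|u|^{p-1}-\lambda\}$ and the parabola $\{v=\tfrac12 u^2+v_0\}$ obtained by integrating $0\le \frac{dv}{du}\le u$, then use monotonicity to conclude the trajectory exits $A$ through the lower curve. The only difference is that you explicitly carry out the analysis of $g(u)=(u^{p-1}-\lambda)-(\tfrac12u^2+v_0)$ at its maximum $u_m=(p-1)^{1/(3-p)}$ to show hypothesis (\ref{v0max}) is exactly what makes the two curves intersect, a computation the paper leaves implicit.
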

\begin{proof}
The calculus of the variations (see Equation (\ref{variation})) ensures that $(u(t),v(t)) \in A$ for small $t>0$. We prove that there exists $0<\tau<\infty$ such that $v(\tau)= |u(\tau)|^{p-1} - \lambda$. It means that $(u,v)$ is bounded in $A$. Since $(u,v)$ belongs to $A$ and thanks to $\lambda \leq 0$, we have
\begin{displaymath}
0 \leq \frac{dv}{du} = u + \frac{\lambda u}{v} - \frac{u|u|^{p-1}}{v} \leq u .
\end{displaymath}
Then, integration between $0$ and $u$ gives
\begin{displaymath}
v \leq \frac{1}{2} u^2 + v_0 .
\end{displaymath}
Hypothesis (\ref{v0max}) implies that $\{u>0,v= |u|^{p-1}-\lambda \} \cap \{u>0,v= \frac{1}{2} u^2 + v_0 \}$ is not empty. Thus, the trajectory $(u,v)$ belongs to the compact 
\begin{displaymath}
\{ u\geq 0, v \geq |u|^{p-1} - \lambda, v \leq \frac{1}{2} u^2 + v_0 \}.
\end{displaymath}
Using $ \frac{dv}{du} \geq 0$, we know that there exist $\tau>0$ such that $v(\tau)= |u(\tau)|^{p-1} - \lambda$.
\end{proof}

Now, the phase plane of the System (\ref{Bstat_sys}) can be drawn, see Figure \ref{phase2}.

\begin{figure}[h]
\begin{center}
  \includegraphics[width=3in]{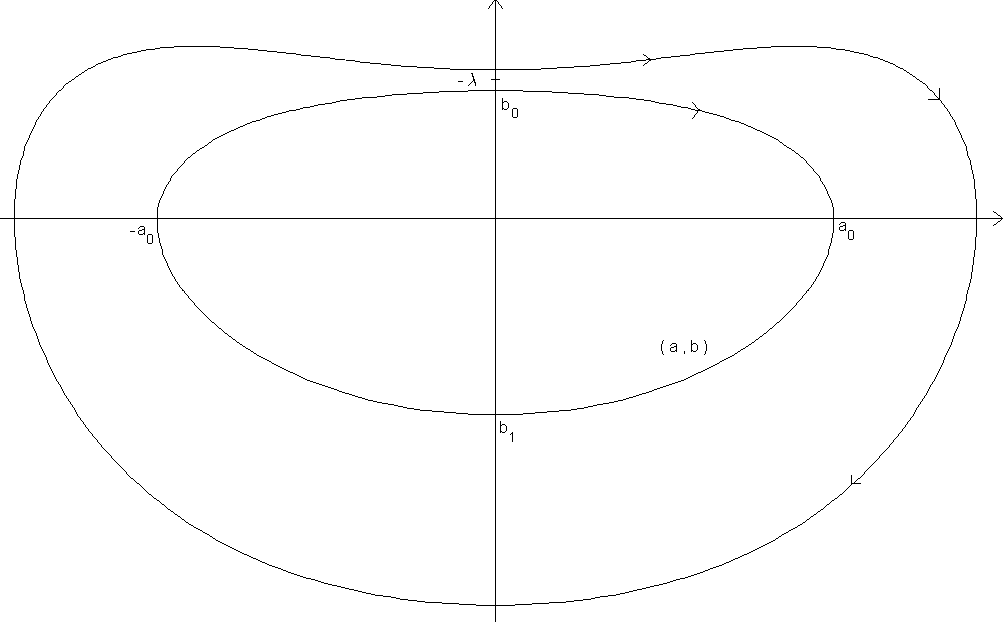}\\
  \caption{Phase plane for  $\lambda \leq 0$.}\label{phase2}
  \end{center}
\end{figure}

\begin{corollary} 
The equilibrium point $(0,0)$ is a center for the System (\ref{Bstat_sys}).
\end{corollary}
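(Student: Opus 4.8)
The plan is to obtain the center property from the global geometry of the flow rather than from a linearisation, since at $\lambda=0$ the Jacobian at $(0,0)$ is nilpotent and Hartman-Grobman does not apply. The engine is the reversibility of System~\ref{Bstat_sys} with respect to the involution $R(u,v)=(-u,v)$, whose fixed-point set is the axis $\{u=0\}$ containing $(0,0)$; this reversibility is exactly the content of Lemma~\ref{sym_lem}, which says that if $\phi=(u,v)$ is a trajectory then so is $t\mapsto R\phi(-t)$. Observe that the vector field $(v,\,uv-u|u|^{p-1}+\lambda u)$ is $\mathcal C^{1}$ (because $u\mapsto u|u|^{p-1}$ has the continuous derivative $p|u|^{p-1}$), so trajectories are unique. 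Hence, if a trajectory $\phi$ satisfies $\phi(0)\in\{u=0\}$, the two trajectories $\phi$ and $R\phi(-\cdot)$ agree at $t=0$ and therefore for all $t$, so $R\phi(-t)=\phi(t)$; as soon as $\phi$ meets $\{u=0\}$ again at a first time $t_{1}>0$, evaluating at $t_{1}$ gives $\phi(-t_{1})=R\phi(t_{1})=\phi(t_{1})$, so $\phi$ is periodic of period $2t_{1}$, and being the union of the arc it traces in $\{u\ge 0\}$ with the $R$-image of that arc it is a simple closed curve meeting each coordinate half-axis once, hence surrounding $(0,0)$. The proof is thereby reduced to: every trajectory issued from $(0,v_{0})$ with $v_{0}>0$ small returns to the axis $\{u=0\}$.

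This last point is the phase-plane chase prepared just before the statement and drawn in Figure~\ref{phase2}. If $\lambda<0$ and $v_{0}<-\lambda$, the trajectory starts in $\{u>0,\ v>0,\ v<|u|^{p-1}-\lambda\}$, where $dv/du\le 0$ and $d^{2}v/du^{2}\le 0$, so $v$ decreases while $u$ increases and, $dv/du$ tending to $-\infty$ near the $u$-axis, it reaches a point $(u_{2},0)$ with $u_{2}>0$ in finite time; since $v'=u_{2}(\lambda-u_{2}^{p-1})<0$ there, it enters $\{u>0,\ v<0\}$, where $dv/du\ge 0$, so parametrising by $u$, which now decreases back to $0$, the ordinate decreases to some $v_{3}<0$ and one checks $u$ reaches $0$ in finite time with $v$ bounded. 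If $\lambda=0$ the only change is near the start: the trajectory first enters $A=\{u>0,\ v\ge|u|^{p-1}\}$, and Lemma~\ref{bounded_neg}, whose restriction on $v_{0}$ for $p<3$ is harmless since we take $v_{0}$ small, forces it to leave $A$ across the curve $v=|u|^{p-1}$ at some $(u_{1},u_{1}^{p-1})$, after which $v$ is monotone decreasing down to the $u$-axis and we are back in the previous situation. In both cases $(u,v)$ reaches $(0,v_{3})$ with $v_{3}<0$, so the reversibility step above makes it a periodic orbit encircling $(0,0)$.

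It remains to check that these orbits fill a punctured neighbourhood of $(0,0)$, not merely accumulate at it. Two such periodic orbits cannot cross (uniqueness), and since the orbit through $(0,v_{0})$ meets $\{u=0\}$ precisely at $(0,v_{0})$ and its return point $(0,v_{3})$ with $v_{3}<0$, they are nested and strictly ordered by $v_{0}$; for small $\delta>0$ the bounded region $D_{\delta}$ enclosed by the orbit through $(0,\delta)$ is a genuine neighbourhood of $(0,0)$. The qualitative picture of Figure~\ref{phase2} shows that every trajectory starting in $D_{\delta}$ stays trapped there and meets $\{u=0\}$, hence, being distinct from $(0,0)$, coincides with one of the periodic orbits constructed above; thus $D_{\delta}\setminus\{(0,0)\}$ is a union of periodic orbits surrounding $(0,0)$, i.e. $(0,0)$ is a center. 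The only genuinely delicate case is $\lambda=0$, where the center property cannot be read off the linearisation and Lemma~\ref{bounded_neg} is indispensable for closing the trajectories; the remainder is the reversibility argument and the routine bookkeeping of nested orbits.
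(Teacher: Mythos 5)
Your argument is correct and follows the same route the paper intends: the corollary is stated there without proof as an immediate consequence of the phase-plane picture (Figure~\ref{phase2}), which rests on exactly the ingredients you use --- the symmetry of Lemma~\ref{sym_lem}, the sign information from Equations~\ref{variation} and~\ref{convexity}, and Lemma~\ref{bounded_neg} to force trajectories back out of $\{v\ge |u|^{p-1}-\lambda\}$. Your write-up merely makes explicit the reversibility-plus-uniqueness step that closes each orbit and the nesting argument that fills a punctured neighbourhood of $(0,0)$, both of which the paper leaves implicit.
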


Now, we use this information on the trajectories of the System (\ref{Bstat_sys}) to obtain some results concerning the solutions of Equation (\ref{Bstat_param}).

\begin{theorem}\label{lneg_sol+}
Let $\lambda \leq 0$. For  each boundary conditions
\begin{itemize}
\item $u(-\alpha)=u(\alpha)=0$ (Dirichlet b.c.) ,
\item $u(-\alpha)=u'(\alpha)=0$ (mixed$-1$ b.c.), 
\item $u'(-\alpha)=u(\alpha)=0$ (mixed$-2$ b.c.), 
\end{itemize}
there exists a positive solution of the Equation (\ref{Bstat_param})
\begin{displaymath}
u'' -uu'+u|u|^{p-1}-\lambda u = 0 \textrm{ in } (-\alpha,\alpha) \textrm{ for some }\alpha>0.
\end{displaymath}
\end{theorem}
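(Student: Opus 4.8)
The plan is to transcribe the three solutions directly from the phase portrait of System~\ref{Bstat_sys} pictured in Figure~\ref{phase2}, in the same spirit as the proof of Theorem~\ref{sol+}. By the Corollary above, $(0,0)$ is a center, so System~\ref{Bstat_sys} possesses a one-parameter family of periodic trajectories encircling $(0,0)$; I would fix one of them as follows. Take the trajectory $(u,v)$ issued from a point $(0,v_0)$ with $v_0>-\lambda$ if $p\geq 3$, and with $v_0$ moreover in the range prescribed by \ref{v0max} if $p<3$. By the sign of $dv/du$ read off from \ref{variation}, this trajectory enters $A=\{u>0,\ v\geq|u|^{p-1}-\lambda\}$ for small times; Lemma~\ref{bounded_neg} then forces it to leave $A$ through the curve $\{v=|u|^{p-1}-\lambda\}$, after which the signs of $dv/du$ and $d^2v/du^2$ coming from \ref{variation}--\ref{convexity} make it cross the positive $u$-axis transversally at a point $(u_1,0)$ with $u_1>0$ (note that there $v'=-u_1(u_1^{p-1}-\lambda)\neq 0$, so the crossing takes a finite time), and then travel through $\{u>0,\ v<0\}$ back to the ordinate axis at some point $(0,v_0')$ with $v_0'<0$. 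Using the symmetry of Lemma~\ref{sym_lem}, the full orbit is a simple closed curve meeting $\{u=0\}$ exactly at $(0,v_0)$ and $(0,v_0')$ and meeting $\{v=0\}$ exactly at $(\pm u_1,0)$.

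Along this orbit, and up to a translation of the independent variable (the equation being autonomous), the function $u$ solves Equation~\ref{Bsat_param}, vanishes at the two instants where the orbit meets $\{u=0\}$, and has vanishing derivative at the two instants where it meets $\{v=0\}$. I would then cut the right half of the orbit (the arc contained in $\{u\geq 0\}$) at these distinguished points and read off:
\begin{itemize}
\item the arc from $(0,v_0)$ to $(0,v_0')$ through $(u_1,0)$, along which $u$ increases from $0$ to $u_1$ and then decreases back to $0$: after centering its time interval this is the positive Dirichlet solution;
\item the sub-arc from $(0,v_0)$ to $(u_1,0)$, along which $u$ increases from $0$ to its maximum $u_1$: this gives the positive mixed$-1$ solution ($u(-\alpha)=u'(\alpha)=0$);
\item the sub-arc from $(u_1,0)$ to $(0,v_0')$, along which $u$ decreases from $u_1$ to $0$: this gives the positive mixed$-2$ solution ($u'(-\alpha)=u(\alpha)=0$).
\end{itemize}
In each case $2\alpha$ is the (finite) time needed to run through the chosen arc, finite because the vector field of System~\ref{Bstat_sys} does not vanish on the arc (its only zero $(0,0)$ is not on it) and the arc is compact. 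Positivity on $(-\alpha,\alpha)$ is immediate from the strict monotonicity of $u$ along each arc, and uniqueness follows from uniqueness of trajectories for the first-order system (standard ODE theory, see \cite{Am}): any positive solution on $(-\alpha,\alpha)$ satisfying one of the three boundary conditions corresponds to an arc of an orbit lying in $\{u\geq 0\}$ with the prescribed endpoint behaviour, and the qualitative description of the phase plane shows there is only one such arc.

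The step I expect to demand the most care is the first one: proving that the selected trajectory genuinely closes up into a periodic orbit with the stated four-point structure, i.e. that after leaving $A$ it really reaches the positive $u$-axis and then the negative part of the ordinate axis, rather than escaping to infinity or spiralling. This is exactly where Lemma~\ref{bounded_neg} must be combined with the monotonicity and convexity bookkeeping of \ref{variation}--\ref{convexity}, and, when $p<3$, with the requirement that $v_0$ stay in the admissible window of \ref{v0max}. Once the phase portrait of Figure~\ref{phase2} is rigorously established, the remainder of the argument is a routine transcription, exactly as in Theorem~\ref{sol+}.
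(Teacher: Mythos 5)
Your proposal is correct and takes essentially the same approach as the paper: the paper's proof simply reads the Dirichlet, mixed$-1$ and mixed$-2$ solutions off as arcs of the closed orbit in Figure~\ref{phase2} between the points $(0,b_0)$, $(a_0,0)$ and $(0,b_1)$, which is exactly your construction. You merely make explicit the supporting bookkeeping (Lemma~\ref{bounded_neg}, the signs from \ref{variation}--\ref{convexity}, the finiteness of the traversal time, and uniqueness via uniqueness of trajectories) that the paper leaves implicit in the figure.
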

\begin{proof}
We use the phase plane of System (\ref{Bstat_sys}), see Figure \ref{phase2}. Consider the trajectory $(a,b)$ between the points 
\begin{itemize}
\item $(0,b_0)$ and $(0,b_1)$: we obtain the Dirichlet solution,
\item $(0,b_0)$ and $(a_0,0)$: we obtain the mixed$-1$ solution, 
\item $(a_0,0)$ and $(0,b_1)$: we obtain the mixed$-2$ solution.
\end{itemize}
\end{proof}

\begin{theorem} 
Let $\lambda \leq 0$. For all $\alpha>0$, the Equation (\ref{Bstat_param})
\begin{displaymath}
u'' -uu'+u|u|^{p-1}-\lambda u = 0 \textrm{ in } (-\alpha,\alpha) 
\end{displaymath}
admits no positive solution under the Neumann boundary conditions.
\end{theorem}
\begin{proof}
Ab absurbo, suppose that there exists $u$ a positive solution of (\ref{Bstat_param}) under the Neumann boundary conditions, and denote $v=u'$. Then the curve $(u,v)$ is a trajectory of the System (\ref{Bstat_sys}) located in $\mathbb{R}^+ \times \mathbb{R}$ with initial data on the axis $\{ v=0\}$. Then  Equations (\ref{variation}) and (\ref{convexity}) prove that $(u,v)$ can not cross the axis $\{ v=0\}$ once again without going into $\mathbb{R}^- \times \mathbb{R}$. A contradiction with the positivity of $u$.
\end{proof}

\begin{theorem} 
Let $\lambda \leq 0$. For some $\alpha>0$, the Equation (\ref{Bstat_param})
\begin{displaymath}
u'' -uu'+u|u|^{p-1}-\lambda u = 0 \textrm{ in } (-\alpha,\alpha) 
\end{displaymath}
admits a sign-changing solution under the Neumann boundary conditions.
\end{theorem}
\begin{proof}
Using the phase plane of System (\ref{Bstat_sys}) (see Figure \ref{phase2}), consider the trajectory $(a,b)$ between the points $(a_0,0)$ and $(-a_0,0)$.
\end{proof}

To conclude this section, let us give this result concerning the periodic solutions:

\begin{theorem}
Let $\lambda \leq 0$. For some $\alpha>0$, there exists a sign-changing periodic solution of the Equation (\ref{Bstat_param})
\begin{displaymath}
u'' -uu'+u|u|^{p-1}-\lambda u = 0 \textrm{ in } \mathbb{R} .
\end{displaymath}
\end{theorem}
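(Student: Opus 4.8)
The strategy is to read off the solution directly from the phase portrait in Figure \ref{phase2}, in the same spirit as the proof of Lemma \ref{lem-per}, but exploiting that for $\lambda\le 0$ the origin is a center rather than a saddle.

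\emph{Step 1: produce a closed orbit.} By the Corollary, $(0,0)$ is the unique equilibrium of System \ref{Bstat_sys} and it is a center, so there is a one-parameter family of closed orbits encircling it, and I would simply fix one of them. If one prefers an argument internal to this subsection, take $v_0$ with $v_0>-\lambda$ satisfying \ref{v0max} and let $(u,v)$ be the trajectory issued from $(0,v_0)$. By Lemma \ref{bounded_neg} it is bounded in $A=\{u>0,\ v\ge|u|^{p-1}-\lambda\}$; the variation and convexity information \ref{variation}--\ref{convexity}, as summarized in Figure \ref{phase2}, then forces it to leave $A$ across the curve $\{v=|u|^{p-1}-\lambda\}$, to cross the $u$-axis at a point $(a_0,0)$ with $a_0>0$, to enter $\{u>0,\ v<0\}$, and there to return to the ordinate axis at a point $(0,v_1)$ with $v_1<0$. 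Applying the reflection symmetry of Lemma \ref{sym_lem} to this arc yields the mirror arc in $\{u<0\}$ running from $(0,v_1)$ back to $(0,v_0)$; since the vector field is purely horizontal at both $(0,v_0)$ and $(0,v_1)$, the two arcs match $C^1$ and, by uniqueness for System \ref{Bstat_sys}, the resulting curve $(a,b)$ is a single closed orbit.

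\emph{Step 2: translate back to the ODE.} A closed orbit is a global periodic solution: there is a minimal $2\alpha>0$ with $(a(x+2\alpha),b(x+2\alpha))=(a(x),b(x))$ for every $x\in\mathbb{R}$, and then $u:=a$ is a $2\alpha$-periodic solution of $u''-uu'+u|u|^{p-1}-\lambda u=0$ on $\mathbb{R}$. It is sign-changing because the Jordan curve $(a,b)$ surrounds $(0,0)$, hence meets both rays $\{v=0,\ u>0\}$ and $\{v=0,\ u<0\}$, so $a$ attains the values $a_0>0$ and $-a_0<0$. Equivalently, any periodic orbit of a planar system encloses an equilibrium, and here the only one is $(0,0)$, so every periodic solution automatically changes sign.

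There is essentially no remaining obstacle: the only genuine work — that the trajectories entering $\{v\ge|u|^{p-1}-\lambda\}$ stay bounded and come back to the ordinate axis — has already been carried out in Lemma \ref{bounded_neg} and the qualitative discussion behind Figure \ref{phase2}. The single point worth one extra line is the smooth gluing of the two symmetric arcs along the ordinate axis, i.e. that the return map of the center is well defined; this is immediate from the field being horizontal there together with uniqueness of solutions of System \ref{Bstat_sys}.
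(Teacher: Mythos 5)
Your proof is correct and follows essentially the same route as the paper, which simply invokes the construction of Lemma \ref{lem-per} together with the symmetry Lemma \ref{sym_lem}; your version merely makes explicit the use of Lemma \ref{bounded_neg} to get the trajectory back to the ordinate axis and the gluing of the two symmetric arcs. The extra details you supply (the $C^1$ matching across the $v$-axis and the sign-change coming from the orbit enclosing the origin) are all consistent with the paper's phase-plane analysis.
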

\begin{proof}
As in Lemma \ref{lem-per}, we can build periodic trajectories of (\ref{Bstat_sys}) using the symmetry (Lemma \ref{sym_lem}). 
\end{proof}

\subsection{Unbounded solutions}\label{unbounded} 
In the above paragraphs, we proved that all the trajectories of the System (\ref{Bstat_sys}) are bounded for $p\geq 3$, but if $1<p<3$ we do not have a general answer: for example, we obtain some bounded trajectories when $\lambda \leq 0$ (see Lemma \ref{bounded_neg}), but with our method, we do not have (yet) any result when $\lambda > 0$. In this paragraph, we show that there exists unbounded trajectories for every $\lambda \in \mathbb{R}$ and for all $p \in (1,3)$. We start with a trajectory $(u,v)$ with an initial data $(0,v_0)$. 

\begin{lemma}\label{lem_nonborne}
Let $p \in (1,3)$ and $\lambda \in \mathbb{R}$. Suppose that
\begin{equation}\label{H_v0}
v_0> 2\max \{ - \lambda, 0\} + 2\cdot8^\frac{p-1}{3-p}.
\end{equation}
Then the trajectory $(u,v)$ is not bounded.
\end{lemma}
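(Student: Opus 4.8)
Here the relevant sign conventions give, along a trajectory in the region $A=\{u>0,\ v>0,\ v>u^{p-1}-\lambda\}$ where Equation~\ref{variation} reads $\tfrac{dv}{du}=\tfrac{u}{v}\big(v-u^{p-1}+\lambda\big)>0$, that both $u$ and $v$ increase as long as the trajectory stays in $A$; in particular $v\ge v_0$ there, and the only way to leave $A$ is to reach the curve $\Gamma=\{v=u^{p-1}-\lambda\}$. The plan is therefore to show that the trajectory issuing from $(0,v_0)$ \emph{never} meets $\Gamma$, hence never leaves $A$, so that $u'=v\ge v_0>0$ forces $u$ (and the trajectory) to escape to infinity. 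First one checks that hypothesis \ref{H_v0} puts the starting point in $A$: since $v_0>2\max\{-\lambda,0\}\ge-\lambda$, the point $(0,v_0)$ lies strictly above $\Gamma$ at $u=0$.

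The heart of the argument is a two–stage barrier comparison. Put $u_0:=8^{1/(3-p)}$, so that $u_0^{3-p}=8$ and $u_0^{p-1}=8^{(p-1)/(3-p)}$. \emph{Stage~1} ($0\le u\le u_0$): while the trajectory is in $A$ one has $v\ge v_0$, while $\Gamma$ stays below $u_0^{p-1}-\lambda$ on $[0,u_0]$; hypothesis \ref{H_v0} is tailored precisely so that $v_0>u_0^{p-1}-\lambda$, hence the trajectory cannot reach $\Gamma$ before $u=u_0$ and is still in $A$ there with $v(u_0)\ge v_0$. \emph{Stage~2} ($u\ge u_0$): compare $v$ with the barrier $h(u):=2\big(u^{p-1}-\lambda\big)$. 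At $u=u_0$, hypothesis \ref{H_v0} gives $v(u_0)\ge v_0>h(u_0)$. If $v$ first met $h$ at some $u_{**}>u_0$, then $\tfrac{dv}{du}(u_{**})\le h'(u_{**})=2(p-1)u_{**}^{p-2}$; but substituting $v=h(u_{**})=2(u_{**}^{p-1}-\lambda)$ into Equation~\ref{variation} gives $\tfrac{dv}{du}(u_{**})=u_{**}/2$, so $u_{**}^{3-p}\le 4(p-1)<8=u_0^{3-p}$, contradicting $u_{**}>u_0$. Hence $v(u)>h(u)$ for every $u\ge u_0$ in the domain of the trajectory. (One must also verify along the way that the trajectory is still in $A$ on $[u_0,u_{**}]$ and that $u_{**}^{p-1}-\lambda=v(u_{**})/2>0$, so that the division in Equation~\ref{variation} is legitimate; both follow from $v\ge v_0>0$ and continuity.)

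Combining the two stages finishes the proof: since $h(u)\ge u^{p-1}-\lambda$ whenever $u^{p-1}-\lambda\ge0$, and $v>0>u^{p-1}-\lambda$ otherwise, we get $v(u)>u^{p-1}-\lambda$ for all admissible $u\ge0$, i.e. the trajectory never touches $\Gamma$ and so never leaves $A$. Therefore $u'=v\ge v_0>0$ throughout, whence $u\to\infty$ (in finite or infinite ``time'', the latter occurring when $1<p\le2$), and $v>2(u^{p-1}-\lambda)\to\infty$ as well; in either case the support of $(u,v)$ is unbounded.

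The step I expect to be the real obstacle is the control near $u=0$. A naive first-touch argument applied directly to $\Gamma$ yields nothing (on $\Gamma$ one has $\tfrac{dv}{du}=0\le(p-1)u^{p-2}$), and applied to the barrier $h$ it only produces a contradiction once $u$ is bounded away from $0$; this is exactly why one must spend the largeness of $v_0$ in Stage~1 to drag the trajectory past the threshold $u_0=8^{1/(3-p)}$, and it explains the precise form of the constant in \ref{H_v0}. The rest — the case splits according to the sign of $\lambda$ and of $u^{p-1}-\lambda$, and the convexity information of Equation~\ref{convexity} if one prefers to argue with the profile $v=f(u)$ directly — is routine bookkeeping.
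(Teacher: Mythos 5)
Your proof is correct, and it follows the same overall strategy as the paper --- trap the trajectory above a barrier of the form $v=2u^{p-1}+\mathrm{const}$, where Equation~\ref{variation} gives $\tfrac{dv}{du}\ge \tfrac{u}{2}>0$, and rule out a first touching point by contradiction --- but the mechanism of the contradiction is genuinely different. The paper integrates the inequality $\tfrac{dv}{du}\ge \tfrac{u}{2}$ from $0$ to the hypothetical touching abscissa $u_1$ to get $v\ge \tfrac{u^2}{4}+v_0$, and then shows that hypothesis~\ref{H_v0} forces $\tfrac{u_1^2}{4}-2u_1^{p-1}<-2\cdot 8^{(p-1)/(3-p)}$, contradicting the elementary bound $\tfrac{u^2}{4}-2u^{p-1}=\tfrac{u^{p-1}}{4}(u^{3-p}-8)\ge -2\cdot 8^{(p-1)/(3-p)}$; the constant in \ref{H_v0} is exactly the negative of this global minimum bound. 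You instead compare \emph{slopes} at the first contact: there the trajectory has slope $u_{**}/2$ while the barrier $h(u)=2(u^{p-1}-\lambda)$ has slope $2(p-1)u_{**}^{p-2}$, which forces $u_{**}^{3-p}\le 4(p-1)<8$, and you spend the largeness of $v_0$ in a separate Stage~1 to push the trajectory past $u_0=8^{1/(3-p)}$ before any contact can occur. Both arguments are complete; yours makes the role of the threshold $8^{1/(3-p)}$ more transparent (it is where the barrier's slope drops below $u/2$) and in fact shows the lemma holds under a slightly weaker largeness condition on $v_0$ when $\lambda>0$, while the paper's value-comparison avoids the two-stage split and works with a single inequality valid for all $u$. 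Your side remarks (why the naive first-touch argument on $\Gamma$ itself fails, and the verification that $v(u_{**})>0$ so the substitution into Equation~\ref{variation} is legitimate) are accurate and address the points one would otherwise worry about.
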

\begin{proof}
We will show that under hypothesis (\ref{H_v0}), the trajectory $(u,v)$ always lies above the curve $\Big\{ v = 2u^{p-1} + 2\max \{ - \lambda, 0\} \Big\}$. Thus, using $\frac{dv}{du} \geq 0$ (Equation (\ref{variation})),  we obtain that $(u,v)$ is not bounded. Ab absurdo, suppose that there exists $x_*>0$ such that $u(x_*)=u_1>0$ and $v(x_*)=v_1>0$ satisfy
\begin{equation}\label{non-borne2}
v_1= 2u_1^{p-1} + 2\max \{ - \lambda, 0\},
\end{equation}
and  
\begin{displaymath}
v(x)> 2u(x)^{p-1} + 2\max \{ - \lambda, 0\} \ \forall \ x \in [0,x_*).
\end{displaymath}
Thus in $[0,x_*)$, we have
\begin{equation}\label{non-borne}
\frac{\lambda - u^{p-1}}{v}>-\frac{1}{2}.
\end{equation}
On the other hand, Equation (\ref{variation}) gives
\begin{displaymath}
\frac{d v}{d u}= u +u\frac{\lambda -u^{p-1}}{v},
\end{displaymath}
and thanks to condition (\ref{non-borne}), we obtain
\begin{equation}\label{non-borne5}
\frac{d v}{d u} \geq \frac{1}{2}u \geq0.
\end{equation}
Then $v(u) \geq \frac{u^2}{4} +v_0$. Hence, for $u=u_1$, we have:
\begin{displaymath}
v_1=v(u_1) \geq \frac{u_1^2}{4} +v_0 ,
\end{displaymath}
and by definition (\ref{non-borne2}) of $u_1$, we have
\begin{displaymath}
2u_1^{p-1} + 2\max \{ - \lambda, 0\} \geq \frac{u_1^2}{4} +v_0 .
\end{displaymath}
Hypothesis (\ref{H_v0}) implies
\begin{equation}\label{non-borne3}
 -2\cdot 8^\frac{p-1}{3-p} > \frac{u_1^2}{4} -2u_1^{p-1}  .
\end{equation}
Meanwhile, if we study both cases $u_1<8^\frac{1}{3-p}$ and $u_1>8^\frac{1}{3-p}$, we remark that
\begin{equation}\label{non-borne4}
\frac{u_1^2}{4} -2u_1^{p-1} = \frac{u_1^{p-1}}{4} \Big( u_1^{3-p} -8  \Big) \geq -2\cdot8^\frac{p-1}{3-p}.
\end{equation}
Equations (\ref{non-borne3}) and (\ref{non-borne4}) are not compatible. Thus,  the trajectory $(u,v)$ can not attain the curve $\Big\{ v = 2u^{p-1} + 2\max \{ - \lambda, 0\} \Big\}$.
\end{proof}

Concerning Equation (\ref{Bstat_param}), we obtain the following results:

\begin{theorem}\label{thm_nonborne}
Let $p \in (2,3)$ and $\lambda \in \mathbb{R}$. For some $\alpha>0$, there exists a positive and unbounded solution of the Equation (\ref{Bstat_param})
\begin{displaymath}
u'' -uu'+u|u|^{p-1}-\lambda u = 0 \textrm{ in } (-\alpha,\alpha) .
\end{displaymath}
satisfying 
\begin{displaymath}
u(-\alpha)=0 \textrm{ and } \lim_{ x\to \alpha} u(x) = \infty.
\end{displaymath}
\end{theorem}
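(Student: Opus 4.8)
The plan is to read the desired solution directly off one of the unbounded phase‑plane trajectories produced in Lemma \ref{lem_nonborne}, the only genuine work being to verify that the associated solution of the ODE, after a translation, blows up at a finite right endpoint and remains positive up to that endpoint. So I would not need any new phase‑plane analysis; everything rides on the quantitative estimate buried in the proof of Lemma \ref{lem_nonborne}.

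Concretely, I would first fix $v_0$ large enough to satisfy hypothesis \ref{H_v0} (possible for every $p\in(2,3)$ and every $\lambda\in\mathbb{R}$, since $8^{(p-1)/(3-p)}<\infty$), and let $(u,v)$ be the solution of System \ref{Bstat_sys} with $u(t_0)=0$, $v(t_0)=v_0$ for some $t_0\in\mathbb{R}$ (the system is autonomous, so the choice of $t_0$ is irrelevant). The proof of Lemma \ref{lem_nonborne} gives more than unboundedness: the trajectory never meets the curve $\{v=2u^{p-1}+2\max\{-\lambda,0\}\}$, hence $\tfrac{dv}{du}\ge\tfrac12 u$ along it, and therefore
\[
u'(t)=v(t)\ \ge\ \tfrac14\,u(t)^2+v_0\ >\ 0 \qquad (t_0\le t<T_+),
\]
where $[t_0,T_+)$ is the forward maximal interval. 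In particular $u$ is strictly increasing, so $u>0$ on $(t_0,T_+)$ and there $u$ solves $u''-uu'+u|u|^{p-1}-\lambda u=0$ classically. Separating variables in the inequality above and integrating from $t_0$,
\[
t-t_0\ \le\ \int_0^{u(t)}\frac{ds}{\tfrac14 s^2+v_0}\ \le\ \frac{\pi}{\sqrt{v_0}},
\]
so $T_+\le t_0+\pi/\sqrt{v_0}<\infty$: the existence interval is finite. To see that it is $u$ itself (not merely $v$) that becomes infinite at $T_+$, suppose $u\le U$ on $[t_0,T_+)$; then along the trajectory $\tfrac{dv}{du}=u\bigl(1+\tfrac{\lambda-u^{p-1}}{v}\bigr)$ is bounded by a constant depending only on $U,\lambda,v_0$ (using $v\ge v_0$), so $v$ would stay bounded as well, and $(u,v)$ would remain in a compact set on which the vector field is $\mathcal C^1$, contradicting maximality of $T_+$. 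Since $u$ is increasing and unbounded on $[t_0,T_+)$, we conclude $u(t)\to+\infty$ as $t\to T_+^-$. Finally, set $\alpha:=(T_+-t_0)/2>0$ and replace $u$ by its translate $x\mapsto u(x+t_0+\alpha)$; this is a positive solution on $(-\alpha,\alpha)$ with $u(-\alpha)=0$ and $\lim_{x\to\alpha}u(x)=\infty$, which is the assertion.

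The delicate points — modest here — are the last two: confirming that the singularity at the right endpoint is genuinely $u\to+\infty$ and not just a blow‑up of $u'$ with $u$ bounded (handled by the local boundedness of $dv/du$ over bounded $u$‑ranges), and keeping the bookkeeping of the translation straight so that the zero of $u$ sits at $-\alpha$ and the blow‑up at $+\alpha$. All the analytic substance is already contained in Lemma \ref{lem_nonborne} and the Riccati‑type comparison $u'\ge\tfrac14 u^2+v_0$; the rest is routine.
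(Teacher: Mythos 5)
Your argument is correct, but for the crucial step --- showing that the existence interval is finite and that $u$ genuinely tends to $+\infty$ at its right endpoint --- you take a genuinely different route from the paper. The paper argues by contradiction with a \emph{parabolic} comparison: assuming the solution exists on $[0,\infty)$, it uses the time-translates $w(x,t)=u(x+t)$ as a super-solution of an auxiliary parabolic problem with dynamical boundary conditions on $[b,b+1]$ whose solution blows up in finite time by the results of \cite{vBPM}; this is where the restriction $p>2$ enters. You instead extract the quantitative content of Lemma \ref{lem_nonborne} --- the trajectory stays above $\{v=2u^{p-1}+2\max\{-\lambda,0\}\}$, hence $\tfrac{dv}{du}\ge\tfrac12 u$ and $v\ge\tfrac14u^2+v_0$ along the whole trajectory --- and turn it into the Riccati inequality $u'\ge\tfrac14u^2+v_0$, which gives the explicit bound $2\alpha\le\pi/\sqrt{v_0}$ by separation of variables; the escape-from-compacta argument correctly rules out the alternative that only $v$ blows up while $u$ stays bounded. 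Your approach is more elementary and self-contained (no appeal to \cite{vBPM}), and it is also stronger: it nowhere uses $p>2$, so it proves finiteness of the interval for all $p\in(1,3)$, which would sharpen the paper's subsequent theorem for $p\in(1,2]$ (where the paper explicitly leaves open whether $\alpha$ is finite) and the final clause of Theorem \ref{thm_nonborne2}. The only point to make explicit is the small bootstrapping needed to justify parametrizing $v$ by $u$: since $v(t_0)=v_0>0$, the set where $v>0$ is open, and on it the derived bound $v\ge v_0+\tfrac14u^2\ge v_0$ shows it is also closed in the maximal interval, so $u$ is indeed strictly increasing throughout; this is routine and does not affect the validity of your proof.
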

\begin{proof}
The existence comes from the previous lemma. We just need to prove that the length of the existence interval is finite. Ab absurdo, suppose that there exists a positive and unbounded solution $u$ of the Equation (\ref{Bstat_param}) in $[0,\infty)$. Let $b>0$ such that $u>|2\lambda|^\frac{1}{p-1}$ in $[b, \infty)$, and define $w(x,t)=u(x+t)$ for all $x \in [b,b+1]$ and for all $t\in[0,\infty)$. Thanks to the choice of $b$, we have
\begin{displaymath}
\partial_x^2 u-u\partial_x u+u^p - \lambda u \geq \partial_x^2 u-u\partial_x u+\frac{u^p}{2} 
\end{displaymath}
in $[b,b+1] \times [0,\infty)$. Because the solution $u$ corresponds to a trajectory of the System (\ref{Bstat_sys}) located in $\mathbb{R} \times \mathbb{R}^+$, we have $\partial_t w =\partial_x u >0$. Thus, $w$ is super-solution of the following problem
\begin{equation*}
\left\{
    \begin{array}{ll}
        \partial _t v = \partial_x^2 v -v\partial_x v + \frac{1}{2}v^p & \textrm{ in }[b,b+1] \times (0,\infty), \\
      \partial_t v + \partial_\nu v= 0 & \textrm{ on } \{ \pm b  \} \times (0,\infty), \\
        v(\cdot , 0) = |2\lambda|^\frac{1}{p-1} & \textrm{ in } [b,b+1] .
    \end{array}
\right.
\end{equation*}
By the comparison principle from \cite{vBDC}, $w\geq v$ where $v$ is the solution of the previous problem. But, according to \cite{vBPM}, the solution $v$ blows up in finite time. Since $w\geq v$, this contradicts the global existence of $w$. Thus, $w$ can not exist on $[b,b+1] \times (0,\infty)$, and the solution $u$ exists only in a finite interval. 
\end{proof}

For $1<p\leq 2$, we do not have the blowing-up argument and we are not sure that the existence interval of the solution is finite. 

\begin{theorem}
Let $p \in (1,2]$ and $\lambda \in \mathbb{R}$. For some $\alpha \in (0,\infty]$, there exists a positive and unbounded solution of the Equation (\ref{Bstat_param})
\begin{displaymath}
u'' -uu'+u|u|^{p-1}-\lambda u = 0 \textrm{ in } (0,\alpha) .
\end{displaymath}
satisfying 
\begin{displaymath}
u(-\alpha)=0 \textrm{ and } \lim_{ x\to \alpha} u(x) = \infty.
\end{displaymath}
\end{theorem}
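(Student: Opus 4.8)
The plan is to reuse, essentially word for word, the existence half of the proof of Theorem~\ref{thm_nonborne}, and then to treat the length of the existence interval separately. Since $p\in(1,2]\subset(1,3)$, Lemma~\ref{lem_nonborne} applies: pick $v_0$ with $v_0>2\max\{-\lambda,0\}+2\cdot 8^{(p-1)/(3-p)}$ and let $(u,v)$ be the maximal trajectory of System~\ref{Bstat_sys} through the point $(0,v_0)$ of the ordinate axis. At that point $u'=v_0>0$, so $u$ and $v$ are positive just after it, and the proof of Lemma~\ref{lem_nonborne} shows the trajectory then stays strictly above the curve $\{v=2u^{p-1}+2\max\{-\lambda,0\}\}$, a region in which $\frac{dv}{du}\ge 0$; hence $u$ is strictly increasing and $u(x)\to+\infty$ at the right endpoint of its interval of existence. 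Because the system is autonomous, a translation places the zero of $u$ at the left endpoint; keeping the portion on which $u>0$, the first component $u$ is then a positive, unbounded solution of $u''-uu'+u|u|^{p-1}-\lambda u=0$ that equals $0$ at the left end of its interval and tends to $+\infty$ at the right end. That interval, being the maximal interval of a smooth autonomous ODE, has length $\alpha\in(0,\infty]$, which is exactly what is claimed.

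I would then try to sharpen this to $\alpha<\infty$. The device used for $p\in(2,3)$ in Theorem~\ref{thm_nonborne} — comparison with a parabolic problem on $[b,b+1]$ together with the blow-up result of \cite{vBPM} — is not available here, since that blow-up statement requires $p>2$. Instead I would read off finiteness directly from the estimate already established inside the proof of Lemma~\ref{lem_nonborne}: there one has $\frac{dv}{du}\ge\frac{u}{2}$ all along the trajectory, hence, integrating from $u=0$, $v\ge\frac{u^2}{4}+v_0$ viewed as a function of $u$; substituting into $u'=v$ gives $u'\ge\frac{u^2}{4}$, and integrating $\frac{u'}{u^2}\ge\frac14$ from any point at which $u$ is already positive forces $u$ to reach $+\infty$ within a finite increment of $x$. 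So in fact $\alpha<\infty$, i.e.\ the theorem holds with the stronger conclusion of Theorem~\ref{thm_nonborne}.

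The only delicate point — more a thing to get right than a genuine obstacle — is that the inequality $\frac{dv}{du}\ge\frac{u}{2}$ must be known on the \emph{whole} right-hand existence interval, not merely up to a first crossing. This is not an additional assumption: the \emph{ab absurdo} step in Lemma~\ref{lem_nonborne} rules out a first time at which the trajectory meets $\{v=2u^{p-1}+2\max\{-\lambda,0\}\}$, so the strict inequality, and with it $\frac{dv}{du}\ge\frac{u}{2}$ and the bound $v\ge u^2/4+v_0$, persists up to the blow-up time. Everything else — positivity and strict monotonicity of $u$ (so that $x\mapsto u(x)$ is invertible and $v$ may legitimately be treated as a function of $u$) and the translation placing the zero of $u$ at the left endpoint — is routine ODE bookkeeping.
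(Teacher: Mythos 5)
Your proof is correct, and it in fact does more than the paper. For the statement as written the paper offers no proof at all: the theorem is stated immediately after the remark that ``for $1<p\leq 2$ we do not have the blowing-up argument and we are not sure that the existence interval of the solution is finite,'' and the intended justification is exactly your first paragraph, namely Lemma \ref{lem_nonborne} applied with $v_0$ as in \ref{H_v0}, plus the observation that above the barrier curve one has $v>0$, so $u$ increases to $+\infty$ on its maximal interval. Your handling of the one delicate point is also right: the \emph{ab absurdo} step of Lemma \ref{lem_nonborne} excludes a first contact with $\{v=2u^{p-1}+2\max\{-\lambda,0\}\}$, and since the trajectory starts strictly above that curve it stays strictly above it, so $\frac{dv}{du}\geq \frac{u}{2}$ and hence $v\geq \frac{u^2}{4}+v_0$ persist on the whole maximal interval.

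Where you genuinely diverge from the paper is the finiteness of $\alpha$. The paper only obtains $\alpha<\infty$ for $p\in(2,3)$ (Theorem \ref{thm_nonborne}), by comparing with a parabolic problem and invoking the blow-up results of \cite{vBPM}, which require $p>2$; for $p\leq 2$ it leaves the question open, which is why the statement allows $\alpha=\infty$. Your Riccati argument closes this gap: from $u'(x)=v(x)\geq \frac{u(x)^2}{4}+v_0$ one gets $\bigl(-1/u\bigr)'\geq \frac14$ on any subinterval where $u>0$ (and $u$ is positive for all $x>0$ since $u'\geq v_0>0$), which forces $u$ to reach $+\infty$ within a finite increment of $x$. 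I see no flaw in this; it is consistent with the asymptotics $u''\sim uu'$ for $p<3$, and it simultaneously gives a much more elementary proof of the finiteness claim in Theorem \ref{thm_nonborne} (no parabolic comparison needed) and would sharpen Theorem \ref{thm_nonborne2} to $\alpha<\infty$ for all $p\in(1,3)$. So your proof establishes the stated theorem and shows that its conclusion can be strengthened to $\alpha\in(0,\infty)$, contrary to the paper's expressed uncertainty.
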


With some assumption on the parameter $\lambda$, we can also build an unbounded trajectory $(u,v)$ with an initial data $(u_0,0)$ belonging to the abscissa axis.

\begin{lemma}
Let $p \in (1,3)$ and $\lambda \in \mathbb{R}^+$. Suppose that there exists $\beta>1$ such that
\begin{equation}\label{H_lambda}
\lambda > \max \Bigg\{ \frac{\beta-1}{2\beta}\Bigg( \frac{2\beta^2}{\beta-1} \Bigg)^\frac{2}{3-p} ,\ \beta\Bigg( \frac{2\beta^2}{\beta-1} \Bigg)^\frac{p-1}{3-p} \Bigg\}
\end{equation}
If 
\begin{equation}\label{H_u0}
u_0 = \Bigg( \frac{2\beta^2}{\beta-1} \Bigg)^\frac{1}{3-p},
\end{equation}
then the trajectory $(u,v)$ is not bounded.
\end{lemma}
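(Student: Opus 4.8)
The plan is to mimic the proof of Lemma~\ref{lem_nonborne}, but with the auxiliary curve anchored at the starting point $(u_0,0)$ rather than on the $v$-axis. I would first record that hypothesis~\ref{H_lambda} forces $\lambda>\beta u_0^{p-1}>u_0^{p-1}>0$ (since $\beta>1$), so that at $(u_0,0)$ the flow of System~\ref{Bstat_sys} has $u'=v=0$ and $v'=u_0(\lambda-u_0^{p-1})>0$; hence $u$ has a strict local minimum there and the forward orbit enters $\{u>u_0,\,v>0\}$ with a vertical tangent. As long as $v>0$ one has $u'=v>0$, so $u$ is strictly increasing and the forward orbit is a graph $v=v(u)$, $u\ge u_0$. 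I would then prove that this graph stays strictly above the curve
\[
\Gamma=\{\,v=\beta u^{p-1}-\lambda\,\},
\]
which lies strictly below $(u_0,0)$ because $\beta u_0^{p-1}-\lambda<0$; together with $dv/du\ge0$ (Equation~\ref{variation}) this will yield unboundedness.

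The key step is the differential inequality: at any orbit point above $\Gamma$ with $v>0$,
\[
\frac{dv}{du}=u+\frac{u(\lambda-u^{p-1})}{v}\ \ge\ \frac{\beta-1}{\beta}\,u .
\]
If $u^{p-1}\le\lambda$ this is immediate; if $u^{p-1}>\lambda$, then from $(\beta-1)\lambda>0$ one has $\beta u^{p-1}-\lambda>\beta(u^{p-1}-\lambda)>0$, so above $\Gamma$ means $v>\beta(u^{p-1}-\lambda)$, whence $u(u^{p-1}-\lambda)/v<u/\beta$ and the bound follows. Integrating from $u_0$, where $v=0$, gives $v(u)\ge\frac{\beta-1}{2\beta}(u^2-u_0^2)$ as long as the orbit has stayed above (or on) $\Gamma$.

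Then I would argue by contradiction: if the orbit first meets $\Gamma$ at some $u_1>u_0$, then $v(u_1)=\beta u_1^{p-1}-\lambda$, so combining with the integrated bound,
\[
\lambda\ \le\ \beta u_1^{p-1}-\tfrac{\beta-1}{2\beta}(u_1^2-u_0^2)\ =\ u_1^{p-1}\Bigl(\beta-\tfrac{\beta-1}{2\beta}u_1^{3-p}\Bigr)+\tfrac{\beta-1}{2\beta}u_0^2 .
\]
Using \ref{H_u0} in the form $u_0^{3-p}=\frac{2\beta^2}{\beta-1}$, one gets $\frac{\beta-1}{2\beta}u_0^{3-p}=\beta$, hence $\frac{\beta-1}{2\beta}u_0^2=\beta u_0^{p-1}$ and, since $u_1>u_0$ and $3-p>0$, $\frac{\beta-1}{2\beta}u_1^{3-p}>\beta$, so the bracket is negative. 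Thus $\lambda<\beta u_0^{p-1}$, contradicting \ref{H_lambda}. Hence the orbit never meets $\Gamma$.

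It will then remain to conclude that the orbit is unbounded: staying above $\Gamma$ with $v>0$ gives $dv/du>\frac{\beta-1}{\beta}u>0$ and $v'=u(v-u^{p-1}+\lambda)>0$, so $u$ and $v$ increase strictly in time; if the trajectory were confined to a compact set it would converge to an equilibrium, forcing $v\to0$, impossible for a strictly increasing positive $v$ — so either $u\to\infty$ or $v\to\infty$ (or the solution blows up in finite time), and in each case the trajectory is unbounded. I expect this last confinement/continuation step to be the main obstacle: one has to make sure the graph $v=v(u)$ actually persists up to any putative first contact with $\Gamma$ (i.e.\ that $u$ keeps increasing) and that excluding a finite accumulation point genuinely forces escape to infinity; the rest is the same bookkeeping as in Lemma~\ref{lem_nonborne}. (The argument above uses only the second term of~\ref{H_lambda}; the first, $\lambda>\frac{\beta-1}{2\beta}u_0$, seems to be an extra safety hypothesis, perhaps needed for a coarser choice of $\Gamma$.)
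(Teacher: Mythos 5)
Your proof is correct and follows essentially the same route as the paper: an ab absurdo argument showing the orbit stays above the barrier curve $\{v=\beta u^{p-1}-\lambda\}$ via the differential inequality $\frac{dv}{du}\ge\frac{\beta-1}{\beta}u$, integrated from $u_0$. The only divergence is that you carry out the integration correctly, obtaining $\frac{\beta-1}{2\beta}(u_1^2-u_0^2)$ where the paper writes $\frac{\beta-1}{2\beta}(u_1-u_0)$ (an apparent slip), and consequently you close the contradiction using the second term of the maximum in \ref{H_lambda}, namely $\lambda>\beta u_0^{p-1}$, whereas the paper invokes the first; your extra care about why staying above the barrier forces escape to infinity is also sound and merely makes explicit what the paper leaves implicit.
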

\begin{proof}
We use the same method as in Lemma \ref{lem_nonborne}: we prove that, under hypotheses (\ref{H_lambda}) and (\ref{H_u0}), the trajectory $(u,v)$  always lies above the curve $\{ v = \beta u^{p-1} -\lambda \}$. Ab absurdo, suppose that there exist $x_* >0$ such that $u(x_*) = u_1$ and $v_1=v(x_*)$ verify
\begin{equation}\label{H_u1}
v_1 = \beta u_1^{p-1} -\lambda ,
\end{equation}
and  
\begin{displaymath}
v(x_*) > \beta u(x_*)^{p-1} -\lambda \ \forall \ 0 < x < x_*.
\end{displaymath}
Thus, in $[0,x_*)$, we have
\begin{equation}\label{H_u2}
\frac{\lambda - u^{p-1}}{v} \geq \frac{-1}{\beta} .
\end{equation}
Equation (\ref{variation}) gives
\begin{displaymath}
\frac{d v}{d u}= u +u\frac{\lambda -u^{p-1}}{v},
\end{displaymath}
and condition (\ref{H_u2}) implies
\begin{equation*}
\frac{d v}{d u} \geq \frac{\beta-1}{\beta} u \geq 0.
\end{equation*}
Integration between $u_0$ and $u_1$ leads to
\begin{displaymath}
v(u_1) \geq \frac{\beta-1}{2\beta} (u_1^2 -u_0^2) ,
\end{displaymath}
definition (\ref{H_u1}) gives
\begin{displaymath}
\beta u_1^{p-1} -\lambda \geq \frac{\beta-1}{2\beta} (u_1^2 -u_0^2) ,
\end{displaymath}
and we obtain
\begin{equation}\label{H_u4}
u_1^{p-1} \Big( 1 -\frac{\beta-1}{2\beta^2} u_1^{3-p}\Big) \geq \frac{1}{\beta}\Big( \lambda- u_0^2\frac{\beta-1}{2\beta}\Big) .
\end{equation}
Since $u_0<u_1$, Equations (\ref{H_lambda}) and (\ref{H_u0}) imply 
\begin{displaymath}
\lambda- u_0^2\frac{\beta-1}{2\beta} >0 \textrm{ and } 1 -\frac{\beta-1}{2\beta^2} u_1^{3-p}<0.
\end{displaymath}
Hence, Equation (\ref{H_u4}) is a contradiction.
\end{proof}

Concerning Equation (\ref{Bstat_param}), and reasoning as in Theorem \ref{thm_nonborne}, we obtain the following result.

\begin{theorem}\label{thm_nonborne2}
Let $p \in (1,3)$ and $\lambda \in \mathbb{R}$ verifying Equation (\ref{H_lambda}). For some $\alpha\in (0,\infty]$, there exists a positive and unbounded solution of the Equation
(\ref{Bstat_param})
\begin{displaymath}
u'' -uu'+u|u|^{p-1}-\lambda u = 0 \textrm{ in } (0,\alpha) .
\end{displaymath}
satisfying  
\begin{displaymath}
u'(0)=0 \textrm{ and } \lim_{ x\to \alpha} u(x) = \infty.
\end{displaymath}
In addition, if $p\in (2,3)$, then $\alpha$ is finite.
\end{theorem}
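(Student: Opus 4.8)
The plan is to read the required solution directly off the unbounded trajectory produced in the preceding lemma, and then, when $p\in(2,3)$, to rule out $\alpha=\infty$ by the very same comparison/blow-up mechanism used in Theorem \ref{thm_nonborne}. First I would apply the preceding lemma with the given $\beta>1$, with $\lambda$ satisfying \ref{H_lambda} and with $u_0$ as in \ref{H_u0}: let $(u,v)$ be the trajectory of System \ref{Bstat_sys} with $(u(0),v(0))=(u_0,0)$. Then $u$ solves $u''-uu'+u|u|^{p-1}-\lambda u=0$ on its maximal interval of existence, $u(0)=u_0>0$, and $u'(0)=v(0)=0$, and by the lemma this trajectory is unbounded.

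Next I would extract the sign information. The proof of the preceding lemma shows that the trajectory stays strictly above the curve $\{v=\beta u^{p-1}-\lambda\}$, hence (as $\beta>1$) strictly above $\{v=|u|^{p-1}-\lambda\}$; on that region $u'=v$ and $v'=u\big(v-|u|^{p-1}+\lambda\big)$ are both nonnegative as long as $u>0$. Since \ref{H_lambda} forces $\lambda>\beta u_0^{p-1}>u_0^{p-1}$, we have $v'(0)=u_0(\lambda-u_0^{p-1})>0$, so $v>0$ and $u$ is strictly increasing for $x>0$; in particular $u\ge u_0>0$, which yields positivity. With $u$ and $v$ nondecreasing on the interval of existence, unboundedness of the trajectory forces $\lim_{x\to\alpha}u(x)=\infty$, where $\alpha\in(0,\infty]$ denotes the right endpoint of that interval (a Gronwall bound on $v'=u(v-|u|^{p-1}+\lambda)$ excludes the alternative that $u$ stays bounded while $v\to\infty$). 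This proves the first assertion, exactly as in Theorem \ref{thm_nonborne}.

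For the last assertion, assume $p\in(2,3)$ and, ab absurdo, that $\alpha=\infty$, so $u$ is a positive increasing unbounded solution on $[0,\infty)$. Choose $b>0$ with $u>2|\lambda|^{1/(p-1)}$ on $[b,\infty)$. Since $p\ge 2$ we have $2^{p-1}\ge 2$, whence $u^{p-1}>2^{p-1}|\lambda|\ge 2\lambda$ and therefore $u^p-\lambda u\ge\tfrac12 u^p$ on $[b,\infty)$. Put $w(x,t)=u(x+t)$ on $[b,b+1]\times[0,\infty)$. The trajectory lies in the upper half-plane, so $\partial_t w=v(x+t)\ge 0$; since $u$ solves the stationary equation, $\partial_x^2 w-w\partial_x w+\tfrac12 w^p\le\partial_x^2 w-w\partial_x w+w^p-\lambda w=0\le\partial_t w$; moreover $w(\cdot,0)=u\ge 2|\lambda|^{1/(p-1)}$ on $[b,b+1]$, $\partial_t w+\partial_\nu w=u'(b+t)-u'(b+t)=0$ at $x=b$, and $\partial_t w+\partial_\nu w=2u'(b+1+t)\ge 0$ at $x=b+1$. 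Hence $w$ is a super-solution of
\begin{equation*}
\left\{
\begin{array}{ll}
\partial_t z=\partial_x^2 z-z\partial_x z+\tfrac12 z^p & \textrm{in }[b,b+1]\times(0,\infty),\\
\partial_t z+\partial_\nu z=0 & \textrm{on }\{b,b+1\}\times(0,\infty),\\
z(\cdot,0)=2|\lambda|^{1/(p-1)} & \textrm{in }[b,b+1].
\end{array}
\right.
\end{equation*}
By the comparison principle of \cite{vBDC}, $w\ge z$, where $z$ is the solution of this problem; by the blow-up result of \cite{vBPM}, $z$ blows up in finite time, contradicting the global existence of $w$. Therefore $\alpha<\infty$.

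The argument is a close transcription of Theorem \ref{thm_nonborne}, so I do not anticipate a real obstacle; the points needing care are (i) writing the auxiliary parabolic problem so that the blow-up theorem of \cite{vBPM} applies — this is what pins the reaction to the pure power $\tfrac12 z^p$ and, via $2^{p-1}\ge 2$, forces $p\ge 2$ — and (ii) checking the dynamical boundary condition for the translate $w$. The bound $p<3$ is inherited from the preceding lemma, which requires it to produce an unbounded trajectory at all; combining the two gives precisely $p\in(2,3)$ for the finiteness of $\alpha$.
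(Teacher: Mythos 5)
Your proposal is correct and follows exactly the route the paper intends: the paper's own ``proof'' of Theorem \ref{thm_nonborne2} is just the remark ``reasoning as in Theorem \ref{thm_nonborne}'', i.e.\ read the solution off the unbounded trajectory from the preceding lemma and transplant the comparison/blow-up argument, which is precisely what you do (and you correctly trace the constraint $p>2$ to the inequality $2^{p-1}\geq 2$ needed for $u^{p}-\lambda u\geq \tfrac12 u^{p}$). Your write-up is in fact more careful than the paper's, notably in checking $v'(0)=u_0(\lambda-u_0^{p-1})>0$ from hypothesis \ref{H_lambda}, in excluding the case where $v$ blows up while $u$ stays bounded, and in verifying the dynamical boundary condition for the translate $w$ at both endpoints.
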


\subsection{Limiting case $p=1$}
In this paragraph, we study the case where the exponent $p$ attains the limit $1$. Then, Equation (\ref{Bstat_param}) becomes
\begin{displaymath}
u'' -uu'+(1-\lambda) u = 0 \textrm{ in } \mathbb{R} ,
\end{displaymath}
and the System (\ref{Bstat_sys}) is written 
\begin{eqnarray}\label{Bstat_sys_1}
\left( \begin{array}{c}
u'  \\
v'
\end{array} \right)   
=
\left( \begin{array}{c}
v\\
u(v+\lambda -1) \end{array} \right) .
\end{eqnarray}
For $\lambda \not =1$, $(0,0)$ is the only equilibrium point, while for $\lambda  =1$ the axis $\{v=0\}$ is a continuum of equilibria. We begin with the case $\lambda  =1$. Here, we have $\frac{dv}{du}=u$, then 
\begin{displaymath}
v(u)=\frac{1}{2}u^2+c,
\end{displaymath}
where $c$ depends on the initial data. Thus, the phase plane is easily drawn, see Figure \ref{phase3}. Now, suppose $\lambda \not =1$. One can compute the explicit trajectory 
\begin{equation*}
\left\{
   						 \begin{array}{l}
        				u_e(x)=(1-\lambda)x  \\
        				v_e(x)=(1-\lambda) 
    						\end{array}
			  \right. \forall \ x\in \mathbb{R}
\end{equation*}
Then, using the following equations
\begin{displaymath}
\frac{dv}{du} = u+\frac{u}{v}(\lambda-1)  \ \textrm{ and } \ \frac{d^2v}{du^2} = 1+ +\frac{\lambda-1}{v^2}\Big( v-u\frac{dv}{du} \Big)
\end{displaymath}
we can draw the phase plane of the System (\ref{Bstat_sys_1}), see Figure \ref{phase3}.  

\begin{figure}[h]
\begin{center}
  \includegraphics[width=4in]{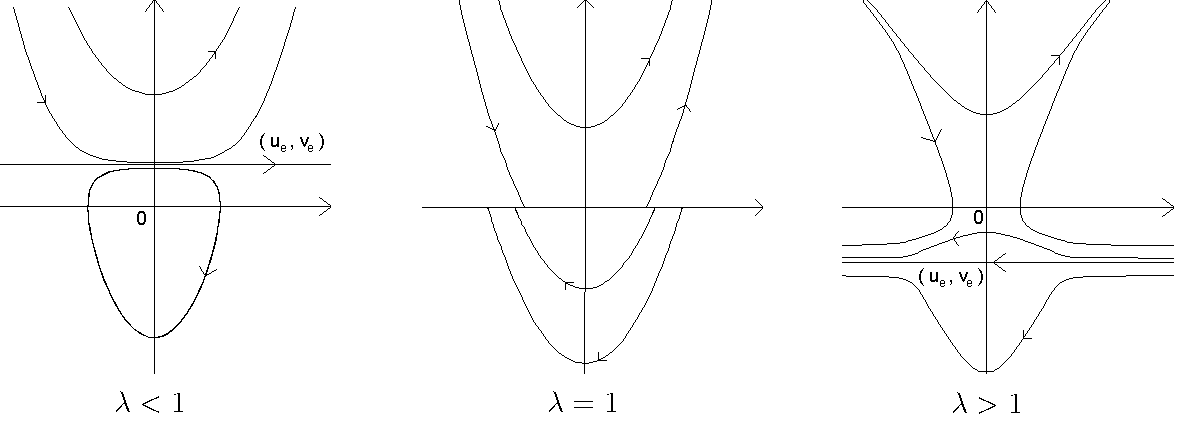}\\
  \caption{Phase planes for  $p=1$.}\label{phase3}
  \end{center}
\end{figure}

\subsection{Bifurcation}
According to the previous paragraphs, we can state that there exists a bifurcation of the phase plane of the System (\ref{Bstat_sys}). First, we note that, for a fixed exponent $p$, the value of $\lambda$ influences the phase plane of the System (\ref{Bstat_sys}): for $\lambda>0$, the System (\ref{Bstat_sys}) admits three equilibrium points (a saddle point, an attractive equilibrium and a repulsive equilibrium). The distance between these equilibria goes to $0$ when $\lambda \to 0$, and for $\lambda = 0$, they collapse and generate a unique center, which persists for all negative $\lambda$ (see Figure \ref{Bif1}).

\begin{figure}[h]
\begin{center}
  \includegraphics[width=3in]{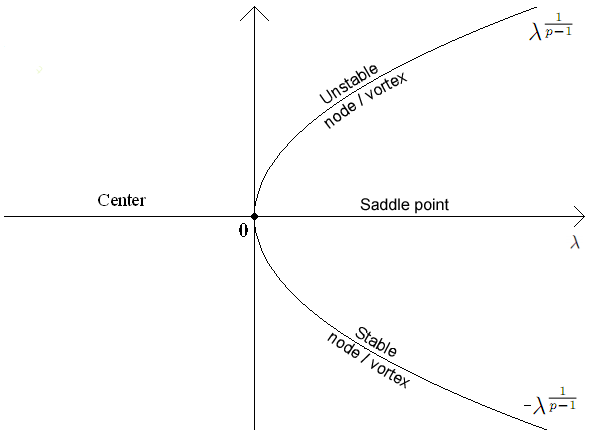}\\
  \caption{Abscissa of the equilibrium points of the System (\ref{Bstat_sys}) depending on the  parameter $\lambda$.}\label{Bif1}
  \end{center}
\end{figure}

Now, for a fixed $\lambda$, the value of the exponent $p$ has an important role. With $\lambda$, the value of $p$ governs the type of the equilibrium points (node, improper node, vortex). The exponent $p$ also establishes if all the trajectories of the System (\ref{Bstat_sys}) are bounded ($p\geq 3$) or if there exists unbounded trajectories  ($1\leq p<3$). Moreover, when $p$ attains the limit $1$, the critical value of $\lambda$ changes from $0$ (if $p>1$) to $1$ (for $p=1$). The case $\lambda=1$ is special because when $p \to 1$, the three equilibria of the System (\ref{Bstat_sys}) (a saddle point, an attractive vortex and a repulsive vortex) generate a continuum of equilibria when $p$ attains the limit $1$ (see Figure \ref{Bif2}).

\begin{figure}[!h]
\begin{center}
  \includegraphics[width=4in]{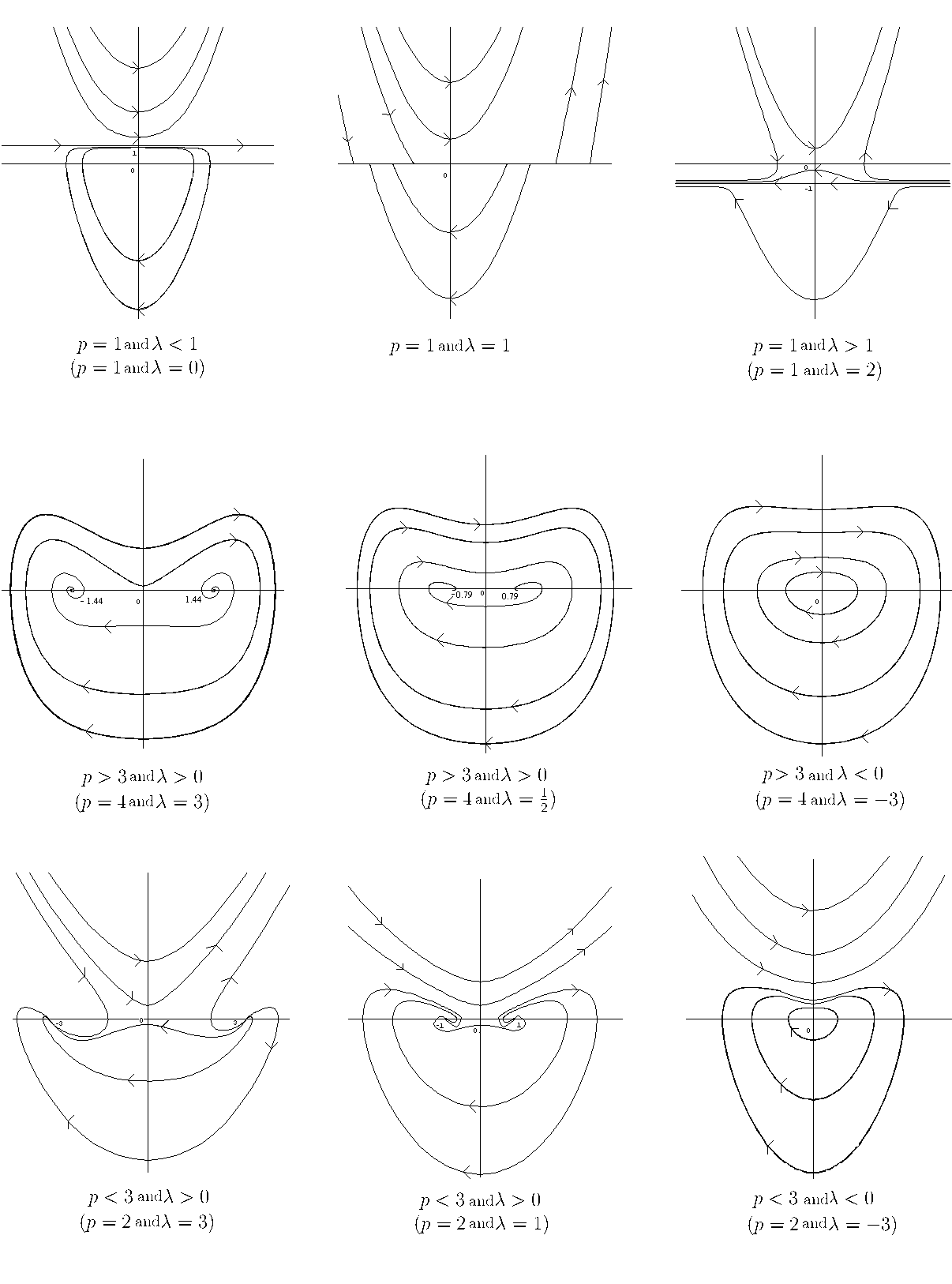}\\
  \caption{Phase planes of the System (\ref{Bstat_sys}) with different parameters.}\label{Bif2}
  \end{center}
\end{figure}

\section{Parabolic problem}

In this section, we study the positive solutions of the parabolic Problem (\ref{B_param}) for many boundary conditions. First, we use the results concerning the stationary Equation (\ref{Bstat_param}) when the domain $\Omega$ is bounded. Then, we consider the case of unbounded domains: we investigate global existence using the comparison method, and blow-up phenomenon thanks to a $L^1-$norm technique.

\subsection{Comparison}
We begin with the Dirichlet problem
\begin{equation}\label{Dirichlet_param}
\left\{
    \begin{array}{ll}
        \partial _t u = \partial_x^2 u -u\partial_x u + u^p - \lambda u& \textrm{ in }[-\alpha,\alpha] \times (0,\infty), \\
       u= 0 & \textrm{ on } \{\pm \alpha\} \times (0,\infty), \\
        u(\cdot , 0) = \varphi & \textrm{ in } [-\alpha,\alpha] ,
    \end{array}
\right.
\end{equation}
where $\alpha>0$, $p>1$, $\lambda \in \mathbb{R}$ and $\varphi \in \mathcal{C}_0([-\alpha,\alpha])$ is non-negative. Thanks to the comparison principle \cite{vBDC} and with the results of the previous sections, we have:

\begin{theorem}
Let  $p>1$ and $\lambda \in \mathbb{R}$. For some $\alpha>0$, there exists a global positive solution 
\begin{displaymath}
u \in \mathcal{C}([-\alpha,\alpha] \times [0,\infty)) \cap \mathcal{C}^{2,1}([-\alpha,\alpha] \times (0,\infty))
\end{displaymath}
of Problem (\ref{Dirichlet_param}) if the initial data $\varphi \in \mathcal{C}_0([-\alpha,\alpha])$ is sufficiently small.
\end{theorem}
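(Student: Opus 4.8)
The plan is to dominate the solution by a time-independent super-solution and to invoke the comparison principle of \cite{vBDC}. Concretely, for a suitable $\alpha>0$ I will produce a function $U=U(x)$ which is a stationary super-solution of Problem \ref{Dirichlet_param}, is nonnegative on the boundary $\{\pm\alpha\}$, and is bounded below by a positive constant on $[-\alpha,\alpha]$; together with the trivial sub-solution $u\equiv 0$ and the smallness of $\varphi$, this will confine the solution to the band $0\le u\le U$ and hence prevent blow-up.

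First I would build $U$. If $\lambda>0$, a constant does the job: pick $c$ with $0<c\le\lambda^{1/(p-1)}$ and set $U\equiv c$; then
\[
\partial_t U-\bigl(\partial_x^2 U-U\partial_x U+U^p-\lambda U\bigr)=-(c^p-\lambda c)=-c\,(c^{p-1}-\lambda)\ge 0,
\]
and $U=c>0$ on $\{\pm\alpha\}$, so $U$ is a stationary super-solution for \emph{every} $\alpha>0$. If $\lambda\le 0$ the constant is no longer a super-solution (the reaction term $c^p-\lambda c$ becomes positive), so instead I would use Theorem \ref{lneg_sol+}, which gives, for some $\alpha_0>0$, a positive solution $W$ of the stationary Dirichlet problem on $(-\alpha_0,\alpha_0)$. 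Fix any $\alpha\in(0,\alpha_0)$: since $W$ is continuous and strictly positive on the compact set $[-\alpha,\alpha]$, there is $\delta>0$ with $W\ge\delta$ there, and in particular $W>0$ on the new endpoints $\{\pm\alpha\}$. Then $U:=W|_{[-\alpha,\alpha]}$ is a stationary super-solution of Problem \ref{Dirichlet_param} bounded below by $\delta$.

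In either case $U$ is bounded, time-independent, and bounded below by a positive constant $m$ on $[-\alpha,\alpha]$ ($m=c$ when $\lambda>0$, $m=\delta$ when $\lambda\le 0$). Hence $U\ge\varphi$ on $[-\alpha,\alpha]$ whenever $\|\varphi\|_{\mathcal{C}([-\alpha,\alpha])}\le m$, which is the precise meaning of ``$\varphi$ sufficiently small''. Since $u\equiv 0$ is an exact solution, hence a sub-solution, and $\varphi\ge 0$, the comparison principle of \cite{vBDC} yields $0\le u\le U$ on $[-\alpha,\alpha]\times[0,T)$, where $T$ denotes the maximal existence time of the solution $u$ of Problem \ref{Dirichlet_param}. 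The uniform bound $u\le\|U\|_\infty<\infty$ excludes finite-time blow-up, so the local solution continues to a global one, i.e.\ $T=\infty$; the regularity $u\in\mathcal{C}([-\alpha,\alpha]\times[0,\infty))\cap\mathcal{C}^{2,1}([-\alpha,\alpha]\times(0,\infty))$ then follows from standard parabolic theory, the nonlinearity being smooth. (Strict positivity of $u$ for $t>0$ is obtained, when $\varphi\not\equiv 0$, from the strong maximum principle.)

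I expect the only delicate point to be the construction of the super-solution for $\lambda\le 0$: the stationary Dirichlet profile furnished by Theorem \ref{lneg_sol+} vanishes at its own endpoints, so it cannot a priori dominate an initial datum $\varphi\in\mathcal{C}_0([-\alpha_0,\alpha_0])$ that may decay more slowly near $\pm\alpha_0$; passing to a slightly smaller interval is exactly what makes the barrier uniformly positive and lets the comparison argument close. Everything else reduces to a routine application of the comparison principle of \cite{vBDC} and of local parabolic theory.
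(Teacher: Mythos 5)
Your proposal is correct and follows the same overall strategy as the paper --- dominate the solution by a stationary super-solution and invoke the comparison principle of \cite{vBDC} --- but the barriers you choose differ in an interesting way. For $\lambda>0$ the paper uses the stationary profiles produced by the phase-plane analysis (the Dirichlet solution of Theorem \ref{sol+} when $p\geq 3$, and a positive solution of Theorem \ref{p<3_sol+} with $\beta(\pm\alpha)>0$ when $1<p<3$), whereas you simply take the constant $\lambda^{1/(p-1)}$; since $c^p-\lambda c\le 0$ for $0<c\le\lambda^{1/(p-1)}$ and the constant is nonnegative on $\{\pm\alpha\}$, this is indeed a super-solution of the Dirichlet problem. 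Your choice is more elementary (it bypasses the phase-plane construction and the case split on $p\gtrless 3$) and yields the conclusion for \emph{every} $\alpha>0$ rather than ``some'' $\alpha$; it is in fact the same device the paper itself uses later in Theorem \ref{full_comparison} for the dynamical boundary conditions. For $\lambda\le 0$ both you and the paper fall back on the stationary Dirichlet solution of Theorem \ref{lneg_sol+}, but you add a genuine refinement: because that profile vanishes at its own endpoints, the condition $\varphi\le\beta$ is not implied by smallness of $\|\varphi\|_\infty$ for an arbitrary $\varphi\in\mathcal{C}_0$, and your restriction to a strictly smaller interval, where the barrier is bounded below by a positive constant, turns ``sufficiently small'' into an honest sup-norm condition. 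The paper leaves this point implicit (its smallness must be read as pointwise domination by $\beta$), so your version is slightly more careful on this score; the rest (zero sub-solution, uniform bound excluding finite-time blow-up, standard parabolic regularity) matches the paper's argument.
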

\begin{proof}
If $p\geq3$ and $\lambda>0$, consider $\beta \in \mathcal{C}^2([-\alpha,\alpha])$ a solution of (\ref{Bstat_param}) with the Dirichlet boundary conditions (see Theorem \ref{sol+}). Suppose that $\varphi$ is small enough: $\varphi \leq \beta$ in $[-\alpha,\alpha]$. Then, we obtain
\begin{equation*}
\left\{
    \begin{array}{ll}
        \partial _t \beta = 0 =\partial_x^2 \beta -\beta\partial_x \beta + \beta^p - \lambda \beta & \textrm{ in }[-\alpha,\alpha] \times (0,\infty), \\
       \beta= 0 & \textrm{ on } \{\pm \alpha\} \times (0,\infty), \\
        \beta(\cdot , 0) \geq \varphi & \textrm{ in } [-\alpha,\alpha] .
    \end{array}
\right.
\end{equation*}
Thus, $\beta$ is a non-negative upper solution of (\ref{Dirichlet_param}), and the constant $0$ is a lower solution of (\ref{Dirichlet_param}). Using the comparison method from \cite{vBDC}, we prove that there exists a solution $u$ of (\ref{Dirichlet_param}) satisfying $0\leq u \leq \beta$ for all $(x,t) \in [-\alpha,\alpha] \times (0,\infty)$. Thus, $u$ is a global positive solution. If  $1<p<3$ and $\lambda>0$, then we just need to choose a positive solution $\beta$ given in Theorem \ref{p<3_sol+} (even if $\beta(\pm \alpha)>0$). For $\lambda \leq 0$, we consider the Dirichlet solution given in Theorem \ref{lneg_sol+}.
\end{proof}

Now, we replace the Dirichlet boundary conditions by the dynamical boundary conditions. Consider the following problem 
\begin{equation}\label{Dynamical_param}
\left\{
    \begin{array}{ll}
        \partial _t u = \partial_x^2 u -u\partial_x u + u^p - \lambda u& \textrm{ in }[-\alpha,\alpha] \times (0,\infty), \\
        \sigma \partial _t u + \partial _\nu u= 0 & \textrm{ on } \{\pm \alpha\} \times (0,\infty), \\
        u(\cdot , 0) = \varphi & \textrm{ in } [-\alpha,\alpha] ,
    \end{array}
\right.
\end{equation}
with  $\alpha>0$, $p>1$, $\lambda \in \mathbb{R}$ and where $\varphi \in \mathcal{C}([-\alpha,\alpha])$ and $ \sigma(\pm \alpha,\cdot) \in \mathcal{C}^1( [0,\infty))$ are non-negative. We obtain two results, depending on the sign of $\lambda$.

\begin{theorem}\label{full_comparison}
Let  $p>1$ and $\lambda >0$. There exists a global positive solution 
\begin{displaymath}
u \in \mathcal{C}([-\alpha,\alpha] \times [0,\infty)) \cap \mathcal{C}^{2,1}([-\alpha,\alpha] \times (0,\infty))
\end{displaymath}
of Problem (\ref{Dynamical_param}) 
\begin{itemize}
\item for all $\alpha>0$ if $\varphi \leq \lambda^\frac{1}{p-1}$.
\item for some $\alpha>0$ if $\varphi-\lambda^\frac{1}{p-1}$ is sign-changing and $\max \{\varphi-\lambda^\frac{1}{p-1},0\}$ is sufficiently close to $0$.
\item for no $\alpha>0$ if $\varphi>\lambda^\frac{1}{p-1}$ and $p>2$.
\end{itemize}
\end{theorem}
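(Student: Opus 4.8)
All three assertions will follow from the comparison principle of \cite{vBDC}: the first two by producing stationary super-solutions, the third by reducing to the blow-up result of \cite{vBPM}. \emph{For $\varphi\le\lambda^{1/(p-1)}$} I would take $\beta\equiv\lambda^{1/(p-1)}$: it is a stationary solution of the equation in \ref{Dynamical_param} (since $\beta^p-\lambda\beta=0$), it satisfies the dynamical condition because $\partial_\nu\beta=0$ makes $\sigma\partial_t\beta+\partial_\nu\beta=0$, and $\beta(\cdot,0)\ge\varphi$; hence $\beta$ is a super-solution and $0$ a sub-solution of \ref{Dynamical_param} for \emph{every} $\alpha>0$, and \cite{vBDC} gives a global solution with $0\le u\le\lambda^{1/(p-1)}$ (positive by the strong maximum principle if $\varphi\not\equiv0$).

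\emph{For a small sign-changing excess} the constant $\lambda^{1/(p-1)}$ no longer dominates $\varphi$, so I would replace it by a non-constant stationary solution. The point is that a time-independent $\beta$ is a super-solution for the dynamical condition as soon as $\partial_\nu\beta\ge0$ on $\{\pm\alpha\}$, i.e. $\beta'(-\alpha)\le0\le\beta'(\alpha)$, since then $\sigma\partial_t\beta+\partial_\nu\beta=\partial_\nu\beta\ge0$. Using the phase plane of System \ref{Bstat_sys} near the repulsive equilibrium $(\lambda^{1/(p-1)},0)$ --- the trajectories already used for Theorems \ref{sol+} and \ref{p<3_sol+} --- I would build, for each small $\ep>0$, a positive $\mathcal{C}^2$ stationary solution $\beta$ on a suitable interval $[-\alpha_\ep,\alpha_\ep]$ whose range lies in $(\lambda^{1/(p-1)}-\ep,\lambda^{1/(p-1)}+\ep)$, with $\beta'(\pm\alpha_\ep)=0$ (a Neumann stationary solution) or with an interior minimum, and exceeding $\lambda^{1/(p-1)}$ near the endpoints; being autonomous, the equation lets this profile be translated freely. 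When $\max\{\varphi-\lambda^{1/(p-1)},0\}$ is small enough and $\varphi-\lambda^{1/(p-1)}$ changes sign --- so that $\varphi$ really does dip below $\lambda^{1/(p-1)}$ where $\beta$ does --- one arranges $\varphi\le\beta$ on $[-\alpha_\ep,\alpha_\ep]$, and \cite{vBDC} again produces a global positive solution with $0\le u\le\beta$.

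\emph{For $\varphi>\lambda^{1/(p-1)}$ and $p>2$}, fix $\alpha>0$; by compactness $\varphi\ge c_0>\lambda^{1/(p-1)}$. Arguing by contradiction, assume a global solution $u$ exists. The constant $c_0$ is a sub-solution of \ref{Dynamical_param} (the equation gives $0\le c_0^p-\lambda c_0$, the boundary condition holds with equality, $c_0\le\varphi$), so $u\ge c_0$ throughout. I would then show $u$ is driven above $(2\lambda)^{1/(p-1)}$ in finite time: letting $\underline u$ be the solution with constant datum $c_0$, one has $c_0\le\underline u\le u$, and differentiating the equation and the dynamical condition in $t$ and applying the maximum principle to $\partial_t\underline u$ (which at $t=0$ equals $c_0^p-\lambda c_0>0$ in the interior and $0$ on $\partial\Omega$) gives $\partial_t\underline u\ge0$, so $\underline u$ is non-decreasing and $\ge c_0$; since no Neumann stationary solution of $v''-vv'+v^p-\lambda v=0$ can stay $\ge c_0>\lambda^{1/(p-1)}$ everywhere (at a minimum $m^*\ge c_0$, interior or boundary, one gets $0=v''+v^p-\lambda v\ge(m^*)^p-\lambda m^*>0$), a monotonicity argument on $\min_{[-\alpha,\alpha]}\underline u(\cdot,t)$ forces it, hence $\min_{[-\alpha,\alpha]}u(\cdot,t)$, to reach $(2\lambda)^{1/(p-1)}$ at some $t_*$. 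For $t\ge t_*$, $u^p-\lambda u\ge\frac12u^p$, so $u$ is a super-solution on $[-\alpha,\alpha]\times(t_*,\infty)$ of $\partial_tv=\partial_x^2v-v\partial_xv+\frac12v^p-\lambda v$ with the same dynamical boundary condition and $v(\cdot,t_*)=u(\cdot,t_*)$; by \cite{vBDC} $u\ge v$, and by \cite{vBPM} (here $p>2$ is used, as in Theorem \ref{thm_nonborne}) $v$ blows up in finite time, contradicting the global existence of $u$. Hence no global solution exists, for any $\alpha>0$.

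The comparison steps are routine; the real difficulties are two. In the second case, constructing a stationary super-solution that simultaneously satisfies $\partial_\nu\beta\ge0$ and lies above $\varphi$ is the delicate part --- this is exactly what the two hypotheses on $\varphi$ are there to make possible, and one must get both the profile and the interval right. In the third case, the crux is the intermediate bound $u(\cdot,t_*)\ge(2\lambda)^{1/(p-1)}$: the awkward situation is a spatial minimum of $\underline u(\cdot,t)$ located on $\partial\Omega$, where one must combine the dynamical boundary condition with the fact that the equation holds up to the boundary to conclude that this minimum still increases; once the threshold is reached, the blow-up of \cite{vBPM} closes the argument verbatim as in Theorem \ref{thm_nonborne}.
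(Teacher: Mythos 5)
Your first two parts follow the paper's proof essentially verbatim: the constant $\lambda^{1/(p-1)}$ as a stationary super-solution for every $\alpha$, and, for the sign-changing case, a non-constant stationary super-solution read off from the phase plane of System \ref{Bstat_sys} with outward normal derivative $\partial_\nu\beta\ge0$ at $\{\pm\alpha\}$ (the paper uses the Neumann solution of Theorem \ref{sol+} when $p\ge3$ and the trajectory launched from $(\mu_0,0)$ with $0<\mu_0<\lambda^{1/(p-1)}$ when $1<p<3$, which is exactly your ``interior minimum'' profile). The only place you genuinely diverge is the third part, and there your route is both longer and weaker than necessary. The paper does not need the intermediate threshold $u\ge(2\lambda)^{1/(p-1)}$: once comparison with the constant sub-solution gives $u\ge c>\lambda^{1/(p-1)}$, one has directly $u^p-\lambda u=u(u^{p-1}-\lambda)\ge(1-\lambda c^{1-p})\,u^p=d\,u^p$ with $d>0$, and the blow-up theorem of \cite{vBPM} applies at once to $\partial_tu\ge\partial_x^2u-u\partial_xu+du^p$ with the dynamical boundary condition. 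Your detour through the monotone auxiliary solution $\underline u$ is where the soft spot sits: the claim that $\min_{[-\alpha,\alpha]}\underline u(\cdot,t)$ must actually \emph{reach} $(2\lambda)^{1/(p-1)}$ in finite time requires a convergence-to-equilibrium argument (the minimum is non-decreasing and bounded, so it converges; ruling out a limit strictly below the threshold needs parabolic regularity plus identification of the limit as a Neumann stationary solution, none of which you supply). Since the factor in front of $u^p$ in \cite{vBPM} can be any positive constant rather than $\tfrac12$, you should drop that step entirely and use the direct estimate; with that replacement your proof matches the paper's.
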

\begin{proof}
For the first statement, we just need to note that the constant function $\lambda^\frac{1}{p-1}$ is a super-solution of (\ref{Dynamical_param}) when $0 \leq \varphi \leq \lambda^\frac{1}{p-1}$. 
For the second statement, we consider two cases: when $p\geq3$, we consider a positive solution $w$ of Equation (\ref{Bstat_param}) under the Neumann boundary conditions, see Theorem \ref{sol+}. Choosing $\varphi$ such that $0 \leq \varphi \leq w$, $w$ becomes a non-negative super-solution of (\ref{Dynamical_param}). If $1<p<3$, we consider  a trajectory $(\mu,\gamma)$ of System (\ref{Bstat_sys}) with $0<\mu(0)<\lambda^\frac{1}{p-1}$ and $\gamma(0)=0$. According to Equation (\ref{variation}), for a small $x_*>0$, we have $\gamma(-x) <0$ and $\gamma(x) >0$ for all $x\in (0,x_*)$. Thus, $\mu$ satisfies $\partial_\nu \mu(-x_*) = -\gamma(-x_*)>0$ and $\partial_\nu \mu(x_*) = \gamma(x_*)>0$, and it is a super-solution of (\ref{Dynamical_param}) when $0 \leq \varphi \leq \mu$ in $[-x_*, x_*]$.

\begin{figure}[h]
\begin{center}
  \includegraphics[width=2in]{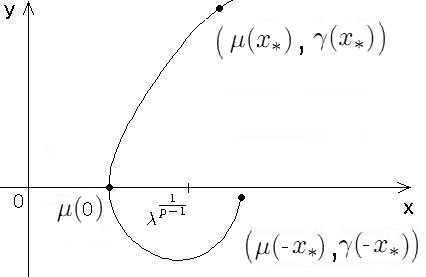}\\
  \caption{Trajectory $(\mu,\gamma)$.}\label{mu_nu}
  \end{center}
\end{figure}

Then, using these super-solutions and the comparison principle from \cite{vBDC}, we prove first and second assertions. For the third statement, consider
$c>0$ such that
\begin{displaymath}
\varphi > c > \lambda^\frac{1}{p-1} .
\end{displaymath}
The comparison principle from \cite{vBDC} implies that $u>c$, where $u$ denote the solution of (\ref{Dynamical_param}) with the initial data $\varphi$. Hence, there exists $d>0$  such that
\begin{displaymath}
u^p - \lambda u \geq d u^p \textrm{ for all }x\in [-\alpha,\alpha] \textrm{ and for all } t>0 .
\end{displaymath}
Thus, $u$ verifies
\begin{eqnarray*}
\left\{
\begin{array}{ll}
\partial_t u \geq \partial_x^2 u-u\partial_x u+d u^p & \textrm{ in } [-\alpha,\alpha] \textrm{ for } t>0,\\
\sigma \partial_t u +\partial_\nu u =0 & \textrm{ on } \{-\alpha,\ \alpha\} \textrm{ for } t>0,\\
u(\cdot ,0)> c>0 & \textrm{ in } [-\alpha,\alpha] .  \\
\end{array}
\right.
\end{eqnarray*}
Then,  blow-up results from \cite{vBPM} imply the blowing-up in finite time of $u$.
\end{proof}

\begin{theorem}
Let  $p>2$ and $\lambda \leq 0$. For all $\alpha>0$, the positive solution $u$ of Problem (\ref{Dynamical_param}) blows up in finite time if the initial data $\varphi$ satisfies
\begin{equation*}
\varphi \geq 0, \ \varphi \not \equiv 0, \ \varphi \in \mathcal{C}([-\alpha,\alpha])
\end{equation*}
\end{theorem}
\begin{proof}
Since $\lambda \leq 0$, the function $u$ verifies
\begin{eqnarray*}
\left\{
\begin{array}{ll}
\partial_t u \geq \partial_x^2 u-u\partial_x u+ u^p & \textrm{ in } [-\alpha,\alpha] \textrm{ for } t>0,\\
\sigma \partial_t u +\partial_\nu u =0 & \textrm{ on } \{-\alpha,\ \alpha\} \textrm{ for } t>0,\\
u(\cdot ,0)>0 & \textrm{ in } [-\alpha,\alpha] .  \\
\end{array}
\right.
\end{eqnarray*}
Thanks to the blow-up results from \cite{vBPM}  and \cite{Rault}, we know that $u$ blows up in finite time.
\end{proof}

\begin{remark}
The Neumann boundary conditions are included here, with the special case $\sigma \equiv 0$.
\end{remark}

\subsection{Global existence in unbounded domains}
We study the Problem (\ref{B_param}) under the Dirichlet, the Neumann and the dynamical boundary conditions when $\Omega$ is an unbounded domain. Using some explicit super-solutions, we look for global existence in the three types of unbounded domains: $(-\infty,0)$, $(0,\infty)$ and $\mathbb{R}$. We begin with the case $\lambda>0$:

\begin{theorem}
Let $p>1$, $\lambda>0$ , $\varphi \in \mathcal{C}(\overline{\Omega})$ a non-negative function, and let $\Omega$ be any unbounded domain. Then, the Problem (\ref{B_param}) admits a global positive solution if the initial data satisfies
\begin{displaymath}
0\leq \varphi \leq \lambda^\frac{1}{p-1},
\end{displaymath}
and when $\mathcal{B}(u)=0$ stands for the Dirichlet, the Neumann, the Robin ($\partial_\nu u + a u =0$ with $a \geq 0$) or the dynamical boundary conditions.
\end{theorem}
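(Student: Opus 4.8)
\emph{Proof proposal.} The plan is to trap the solution between two explicit constant barriers and conclude by the comparison principle of \cite{vBDC}. Set $M=\lambda^{\frac{1}{p-1}}>0$. First I would check that the constant function $\overline u \equiv M$ is a super-solution of Problem \ref{B_param} for each of the four boundary conditions. Since $\overline u$ is constant, $\partial_t\overline u=\partial_x^2\overline u-\overline u\partial_x\overline u=0$ and $\overline u^{\,p}-\lambda\overline u=M(M^{p-1}-\lambda)=0$, so the parabolic equation holds with equality; on $\partial\Omega$ one has $\overline u=M\geq 0$ (Dirichlet), $\partial_\nu\overline u=0$ (Neumann), $\partial_\nu\overline u+a\overline u=aM\geq 0$ (Robin, as $a\geq 0$), and $\sigma\partial_t\overline u+\partial_\nu\overline u=0$ (dynamical); finally $\overline u(\cdot,0)=M\geq\varphi$ by hypothesis. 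Symmetrically, $\underline u\equiv 0$ is a sub-solution: the equation holds with equality, all four boundary relations are satisfied trivially (each reads $0=0$ or $0\leq 0$), and $\underline u(\cdot,0)=0\leq\varphi$.

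Next I would invoke local-in-time existence of a solution $u$ of Problem \ref{B_param} on its maximal interval $[0,T)$ and apply the comparison principle from \cite{vBDC} against the pair $(\underline u,\overline u)$ to obtain $0\leq u\leq M$ on $\overline\Omega\times[0,T)$. Because this bound is uniform in $t$, the sup-norm of $u$ cannot blow up, so standard parabolic continuation forces $T=\infty$; this yields a global solution with $0\leq u\leq \lambda^{\frac{1}{p-1}}$. Strict positivity for $t>0$ (when $\varphi\not\equiv 0$) follows from the strong maximum principle applied to the sub-solution inequality; otherwise $u\equiv 0$ and the statement still holds in the weak sense "non-negative''.

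The hard part will be justifying the comparison principle of \cite{vBDC} on an \emph{unbounded} domain and simultaneously for the whole family of boundary conditions, including the dynamical one $\sigma\partial_t u+\partial_\nu u=0$. The point to emphasize is that $u$ is a priori squeezed between the two constants, hence bounded; this is exactly the growth restriction under which parabolic comparison remains valid on unbounded $\Omega$, and on the compact range $[0,M]$ both the reaction term $z\mapsto z^p-\lambda z$ and the transport coefficient are Lipschitz, so the quasilinear convective structure $-u\partial_x u$ is handled precisely as in \cite{vBDC}. Once this localization/boundedness remark is in place, the argument is essentially immediate, and the choice of the right super-solution ($M=\lambda^{1/(p-1)}$, the middle equilibrium of the stationary system) is what makes all four boundary conditions work at once.
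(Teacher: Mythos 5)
Your proposal is correct and takes essentially the same approach as the paper: the constant $\lambda^{1/(p-1)}$ (the nontrivial equilibrium) is checked to be a super-solution for all four boundary conditions and the comparison principle of \cite{vBDC} is invoked. The extra details you supply --- the zero sub-solution, the continuation argument, and the remark on comparison in unbounded domains --- merely make explicit what the paper leaves implicit.
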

\begin{proof}
As in the proof of Theorem \ref{full_comparison}, we consider the constant function $v(x,t)=\lambda^\frac{1}{p-1}$ for all $(x,t)\in \Omega \times (0,\infty)$. Then, $v$ satisfies Burgers' Equation. On the boundary, we have:
\begin{eqnarray*}
\left.
\begin{array}{lll}
v &\geq 0 & \textrm{ (Dirichlet). } \\
\partial_\nu v &= 0 & \textrm{ (Neumann). } \\
\partial_\nu v + a v & \geq 0 & \textrm{ (Robin). } \\
\sigma \partial_t v + \partial_\nu v &= 0 & \textrm{ (Dynamical). }
\end{array}
\right.
\end{eqnarray*}
The choice of $\varphi$ implies  $\varphi \leq v(\cdot,0)$ in $\Omega$. Thus, $v$ is super-solution of  (\ref{B_param}) for the four boundary conditions above, and we conclude with the comparison method from \cite{vBDC}.
\end{proof}

If $\lambda \leq0$, we must add some restrictions, and we obtain the following results.

\begin{theorem}
Assume $\Omega=(0,\infty)$ and let  $p\in (1,2]$, $\lambda \leq 0$ and $\varphi \in \mathcal{C}(\overline{\Omega})$ a non-negative function. Then, the Problem (\ref{B_param}) admits a global positive solution if the initial data is bounded and when $\mathcal{B}(u)=0$ stands for the Dirichlet boundary conditions or the dynamical boundary conditions with $\sigma>0$ constant.
\end{theorem}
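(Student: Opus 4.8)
The plan is to argue by the comparison method of \cite{vBDC}, exactly as for the other global existence results of this section: it is enough to produce an explicit non-negative super-solution $v$ of Problem~\ref{B_param} on $\overline{\Omega}\times[0,\infty)=[0,\infty)\times[0,\infty)$ which dominates $\varphi$ at $t=0$ and satisfies the prescribed boundary condition at $x=0$; together with the trivial sub-solution $0$, the comparison principle then traps the solution $u$ between $0$ and $v$, which yields global existence and $u\ge 0$ (indeed $u>0$ for $t>0$ when $\varphi\not\equiv 0$, by the strong maximum principle). The point is that, in contrast with the case $\lambda>0$, the constant $\lambda^{1/(p-1)}$ is no longer a super-solution: for $\lambda\le 0$ the zero-order part $u^{p}-\lambda u$ is a genuine source and the homogeneous equation $\dot{y}=y^{p}-\lambda y$ blows up in finite time, so the super-solution must genuinely exploit the drift term $-u\partial_x u$.

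I would look for $v$ of the form $v(x,t)=A\,e^{\mu t+x}$, with $A:=\|\varphi\|_{L^\infty(\overline{\Omega})}<\infty$ (this is where the boundedness of $\varphi$ is used) and $\mu>0$ a large constant to be fixed later. Then $\partial_t v=\mu v$, $\partial_x^2 v=v$ and $v\,\partial_x v=v^{2}$, so the super-solution inequality $\partial_t v\ge\partial_x^2 v-v\,\partial_x v+v^{p}-\lambda v$ becomes, after dividing by $v>0$, equivalent to
\[
\mu-1+\lambda+\bigl(v-v^{p-1}\bigr)\ge 0 .
\]
This is where the hypothesis $p\in(1,2]$ is decisive: since then $0<p-1\le 1$, the real function $s\mapsto s^{p-1}-s$ is bounded above on $[0,\infty)$ by a finite constant $K_p$ (with $K_2=0$), hence $v-v^{p-1}\ge -K_p$ and the displayed inequality holds as soon as $\mu\ge 1-\lambda+K_p$. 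For $p>2$ that supremum is infinite, which is consistent with the blow-up phenomena established earlier and explains the restriction $p\le 2$ in the statement.

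It then remains to check the initial and boundary conditions. First, $v(x,0)=A\,e^{x}\ge A=\|\varphi\|_{L^\infty}\ge\varphi(x)$ for all $x\ge 0$. At the boundary point $x=0$, where the outward normal derivative is $\partial_\nu=-\partial_x$: for the Dirichlet condition one has $v(0,t)=A\,e^{\mu t}>0$; for the dynamical condition with $\sigma>0$ constant, $\bigl(\sigma\,\partial_t v+\partial_\nu v\bigr)\big|_{x=0}=\bigl(\sigma\mu v-\partial_x v\bigr)\big|_{x=0}=(\sigma\mu-1)\,v(0,t)$, which is $\ge 0$ as soon as $\mu\ge 1/\sigma$. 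Thus, fixing $\mu\ge\max\{1-\lambda+K_p,\ 1/\sigma\}$, the function $v$ is a non-negative super-solution, smooth and finite on all of $\overline{\Omega}\times[0,\infty)$, for either boundary condition; since $0$ is obviously a sub-solution compatible with both boundary conditions and $0\le\varphi$, the comparison principle of \cite{vBDC} provides a solution $u$ with $0\le u\le v$ on $\overline{\Omega}\times[0,\infty)$, hence global and positive. (The hypothesis $\sigma>0$ enters exactly in this last estimate; for the Neumann condition $\sigma\equiv 0$ the super-solution would have to be non-increasing in $x$ at the origin, which is incompatible with the profile $e^{x}$, so a different --- and apparently unavailable --- super-solution would be required, whence the exclusion of the Neumann case.)

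The only genuine obstacle I foresee is the choice of the super-solution: one must recognise that the failed constant ansatz has to be replaced by a profile growing in $x$, and that the exponential rate $e^{x}$ is precisely the one at which the quadratic strength of the Burgers drift $-u\partial_x u$ can absorb the reaction $u^{p}$ when $p\le 2$. The remaining ingredients --- the elementary bound on $s^{p-1}-s$, the boundary computations, and the applicability of the comparison principle of \cite{vBDC} on the unbounded domain $\Omega=(0,\infty)$ for super-solutions of this mild exponential growth, as already used for the explicit super-solutions in the preceding theorems --- are routine.
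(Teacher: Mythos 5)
Your proof is correct and follows essentially the same route as the paper: an explicit super-solution of the form $Ae^{\alpha x+h(t)}$, growing in $x$ so that the drift $-v\partial_x v$ contributes $+\alpha v^{2}$ and dominates $v^{p}$ for $p\le 2$, combined with the comparison principle of \cite{vBDC}. The paper takes $h(t)=(t+t_0)^{2}$ and enforces $\alpha v^{2}-v^{p}\ge 0$ via $A^{p-2}\le\alpha$, whereas you take $h(t)=\mu t$ and absorb the bounded remainder $\sup_{s\ge0}(s^{p-1}-s)$ into $\mu$; this is a cosmetic variation, not a different argument.
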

\begin{proof}
We deal with the comparison principle \cite{vBDC} and the explicit function $v(x,t)=Ae^{\alpha x+(t+t_0)^2}$ defined in $\mathbb{R}^+ \times \mathbb{R}^+$. Computing the partial derivatives, we have
\begin{eqnarray*}
\left.
\begin{array}{lll}
\partial_t   v(x,t) & = & 2(t+t_0)v  .\\
\partial_x   v(x,t) & = & \alpha v  .\\
\partial_x^2 v(x,t) & = & \alpha^2 v  .
\end{array}
\right.
\end{eqnarray*}
Choosing $t_0 \geq \frac{1}{2}\Big( \alpha^2- \lambda\Big)$, we obtain
\begin{displaymath}
\partial_t v - \partial_x^2 v +v \partial_x v - v^p +\lambda v \geq  v^2 \Big( \alpha- v^{p-2}  \Big).
\end{displaymath}
Thanks to $p\leq 2$ and with $\alpha x+(t+t_0)^2 \geq 0$ in $\mathbb{R}^+ \times \mathbb{R}^+$, we have $v^{p-2} \leq A^{p-2}$. Choosing $A^{p-2} \leq \alpha$, we obtain $\partial_t v - \partial_x^2 v +v\partial_x v - v^p +\lambda v \geq 0$. Since $v\geq 0$, the case of the Dirichlet boundary conditions is trivial. Choosing $t_0 \geq \frac{\alpha}{2\sigma}$, the case of the dynamical boundary conditions is verified thanks to
\begin{displaymath}
\sigma \partial_t v + \partial_\nu v = v \Big( 2\sigma(t+t_0) -\alpha \Big) \geq 0.
\end{displaymath}
Finally, choosing $A \geq \sup_\Omega \varphi$, $v$ is a super-solution of Problem (\ref{B_param}) under the above boundary conditions. Thus, using the comparison method from \cite{vBDC}, we prove that there exist a positive solution of Problem (\ref{B_param}) bounded by $v$, and then, this solution must be global.
\end{proof}

\begin{remark}
In the previous proof, one can see that the dynamical boundary conditions are satisfied for a more general coefficient $\sigma$ verifying
\begin{displaymath}
\sigma(x,t) \geq \frac{\alpha}{2(t+t_0)}.
\end{displaymath}
And replacing the function $v$ by $w(x,t)=Ae^{\alpha x+(t+t_0)^n}$, we can consider smaller coefficients $\sigma>0$ with $\sigma (x,t)  \underset{t \to \infty}{\sim} t^{-n+1}$.
\end{remark}

\begin{corollary}
Suppose $\Omega=(-\infty,0)$ or $\Omega=\mathbb{R}$. Let $p =2$, $\lambda\leq 0$ and $\varphi \in \mathcal{C}(\Omega)$. Then the Problem (\ref{B_param}) admits a global positive solution if there exists $C>0$ and $a>0$ such that
\begin{displaymath}
0\leq \varphi(x) \leq Ce^{ax} \textrm{ in } \Omega
\end{displaymath}
and when $\mathcal{B}(u)=0$ stands for the Dirichlet, the Neumann or the dynamical boundary conditions with $\sigma \geq 0$. 
\end{corollary}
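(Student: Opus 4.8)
The plan is to mimic the proof of the preceding theorem, exhibiting an explicit exponential super-solution of Problem \ref{B_param} on $\overline{\Omega}\times(0,\infty)$ and then invoking the comparison principle from \cite{vBDC}. Take $v(x,t)=Ae^{\alpha x+(t+t_0)^2}$ with constants $A>0$, $\alpha\geq 1$ and $t_0\geq 0$ to be fixed. A direct computation using $\partial_t v=2(t+t_0)v$, $\partial_x v=\alpha v$, $\partial_x^2 v=\alpha^2 v$ and $p=2$ gives
\begin{displaymath}
\partial_t v-\partial_x^2 v+v\partial_x v-v^p+\lambda v = v\big(2(t+t_0)-\alpha^2+\lambda\big)+(\alpha-1)v^2 .
\end{displaymath}
The point of restricting to $p=2$ appears here: for $p<2$ the last term would be $(\alpha-v^{p-2})v^2$, and on $(-\infty,0)$ or $\mathbb{R}$ the exponent $\alpha x+(t+t_0)^2$ is not necessarily non-negative, so the bound $v^{p-2}\leq A^{p-2}$ used in the previous theorem breaks down; when $p=2$ this term is simply $(\alpha-1)v^2$, hence $\geq 0$ as soon as $\alpha\geq 1$. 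Since $\lambda\leq 0$ we have $\alpha^2-\lambda\geq 0$, so choosing $t_0\geq\frac{1}{2}(\alpha^2-\lambda)$ makes the first term non-negative for all $t\geq 0$; thus $v$ satisfies the differential inequality defining a super-solution in the interior.

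Next I would verify the boundary inequalities, which are milder here than in the half-line case treated above. On $\Omega=\mathbb{R}$ there is no boundary to check. On $\Omega=(-\infty,0)$ the boundary is the point $\{0\}$, where the outward normal is $+\partial_x$, so $\partial_\nu v=\alpha v>0$ there; consequently $v\geq 0$ takes care of the Dirichlet condition, $\partial_\nu v\geq 0$ of the Neumann condition, and $\sigma\partial_t v+\partial_\nu v=v\big(2\sigma(t+t_0)+\alpha\big)>0$ of the dynamical condition for every $\sigma>0$ --- and, in contrast with the $(0,\infty)$ case, no extra lower bound on $t_0$ is required, because the normal derivative now has the favourable sign.

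Finally I would match the exponential rates so that $v(\cdot,0)=Ae^{\alpha x+t_0^2}$ dominates $\varphi$. Given $0\leq\varphi(x)\leq Ce^{ax}$: on $\Omega=\mathbb{R}$ one must take $\alpha=a$, and on $\Omega=(-\infty,0)$ one needs $1\leq\alpha\leq a$ so that $e^{\alpha x}$ does not decay faster than $e^{ax}$ as $x\to-\infty$; in either case the constraint $\alpha\geq 1$ forces the admissible growth rate to satisfy $a\geq 1$, and then it suffices to enlarge $A$ (for instance $A\geq Ce^{-t_0^2}$) to obtain $v(\cdot,0)\geq\varphi$ on $\overline{\Omega}$. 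With $v$ a global super-solution and the constant $0$ a global sub-solution lying below $\varphi$, the comparison principle of \cite{vBDC} produces a global solution $u$ of Problem \ref{B_param} with $0\leq u\leq v$, hence positive and global for each of the three boundary conditions. I expect this last step --- reconciling the super-solution's exponential rate $\alpha$ (pinned down by the interior inequality, since $p=2$) with the prescribed decay rate $a$ of the datum --- to be the only delicate point; the interior and boundary estimates are routine once $p=2$ is used to neutralize the nonlinearity.
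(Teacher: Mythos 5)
Your argument is the paper's argument: the same super-solution $v(x,t)=Ae^{\alpha x+(t+t_0)^2}$, the same interior computation (the paper's proof literally says ``as in the previous theorem \dots thanks to $p=2$ and with some appropriate constants $A$ and $\alpha$''), and the same boundary checks at $x=0$ with the outward normal $+\partial_x$, where $\partial_\nu v=\alpha v>0$ makes Dirichlet, Neumann and dynamical conditions all favourable with no extra condition on $t_0$. Your identity $\partial_t v-\partial_x^2 v+v\partial_x v-v^2+\lambda v=v\bigl(2(t+t_0)-\alpha^2+\lambda\bigr)+(\alpha-1)v^2$ is correct, as is the choice $t_0\geq\tfrac12(\alpha^2-\lambda)$, and your explanation of why $p=2$ is needed on these domains (the bound $v^{p-2}\leq A^{p-2}$ from the half-line $(0,\infty)$ case requires $\alpha x+(t+t_0)^2\geq0$, which fails for $x<0$) is exactly the right reading of the hypothesis.

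The one substantive point is the restriction $a\geq1$ that you surface at the end, and you are right that it is unavoidable for this family of super-solutions: since $v(0,t)=Ae^{(t+t_0)^2}$ is unbounded in $t$, the term $(\alpha-1)v^2$ cannot be absorbed when $\alpha<1$, so $\alpha\geq1$ is forced; and the comparison $Ae^{\alpha x+t_0^2}\geq Ce^{ax}$ forces $\alpha=a$ on $\mathbb{R}$ and $\alpha\leq a$ on $(-\infty,0)$, hence $a\geq1$ in both cases. This is not a defect you introduced: the corollary as stated allows any rate $a>0$, and the paper's own one-line proof hides the issue behind ``some appropriate constants $A$ and $\alpha$''. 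As written, both your proof and the paper's establish the corollary only for $a\geq1$; for $0<a<1$ one would need either to add that hypothesis to the statement or to produce a different super-solution (note that a constant will not do, since $\lambda\leq0$ and $p=2$ make constants sub-solutions, not super-solutions). It would strengthen your write-up to state this explicitly as a restriction on the theorem rather than as an incidental consequence of the construction.
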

\begin{proof}
As in the previous theorem, we consider $v(x,t)=Ae^{\alpha x+(t+t_0)^2}$. Thanks to $p=2$ and with some appropriate constants $A$ and $\alpha$, we have
\begin{eqnarray*}
\left\{
\begin{array}{lll}
\partial_t v - \partial_x^2 v +v\partial_x v - v^p +\lambda v  \geq  0 & \textrm { in } & \Omega \times [0,\infty). \\
v(\cdot,0) \geq  \varphi & \textrm { in } & \Omega.
\end{array}
\right.
\end{eqnarray*}
The case $\Omega=\mathbb{R}$ (no boundary) and the case of Dirichlet boundary conditions are trivial. For $\Omega=(-\infty,0)$ (the boundary is $\{0\}$), we have $\partial_\nu v =\partial_x v = \alpha v > 0$ for $x=0$. Thus, the Neumann boundary conditions and the dynamical boundary conditions with $\sigma \geq 0$ are verified.
\end{proof}

When $\lambda = 0$, $\Omega = (-\infty,0)$ and  $p>3$, the Green function of the heat equation is a suitable super-solution for the Problem (\ref{B_param}).

\begin{theorem}
Assume $\Omega=(-\infty,0)$, $p >3$, $\lambda = 0$ and $\varphi \in \mathcal{C}(\Omega)$. Then the Problem (\ref{B_param}) admits a global positive solution
if the initial data $\varphi$ is sufficiently small and when $\mathcal{B}(u)=0$ stands for the Dirichlet, the Neumann or the dynamical boundary conditions with $\sigma \geq 0$ constant. 
\end{theorem}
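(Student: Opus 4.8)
The plan is as follows. Since this statement concerns the case $\lambda=0$, I would produce one explicit, bounded, global classical super-solution $v$ on $(-\infty,0]\times[0,\infty)$ that is admissible for the Dirichlet, the Neumann \emph{and} the dynamical boundary condition simultaneously, and then apply the comparison principle of \cite{vBDC}: comparing the solution $u$ from below with the sub-solution $0$ and from above with $v$ gives $0\leq u\leq v$ on $\overline{\Omega}\times[0,\infty)$, so $u$ stays bounded on its maximal interval, cannot blow up, and hence is a global positive solution.

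For $v$ I would use a rescaled heat kernel, namely
\[
v(x,t)=M\,(t+1)^{-\beta}\exp\!\Big(-\frac{(x-\sigma)^2}{4(t+1)}\Big),\qquad (x,t)\in(-\infty,0]\times[0,\infty),
\]
where the Gaussian is centred at $x_0:=\sigma>0$ (to the right of $\Omega$), the exponent is $\beta:=\tfrac{1}{p-1}$, which lies in $(0,\tfrac12)$ precisely because $p>3$, and $M>0$ is a small constant. Two elementary computations underlie everything: the two $(x-\sigma)^2$ contributions cancel, giving $v_t-v_{xx}=\big(\tfrac12-\beta\big)\dfrac{v}{t+1}\geq 0$ (so $v$ is the heat kernel amplified by $(t+1)^{1/2-\beta}$, which turns the heat operator into a strictly positive multiple of $v$), while $v_x=\dfrac{\sigma-x}{2(t+1)}\,v\geq 0$ on all of $\overline{\Omega}$ since $\sigma>0$ and $x\leq 0$.

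The three hypotheses of the comparison theorem are then checked directly. For the differential inequality (with $\lambda=0$),
\[
v_t-v_{xx}+v\,v_x-v^{p}=\Big(\tfrac12-\beta\Big)\frac{v}{t+1}+v\,v_x-v^{p}\;\geq\;\Big(\tfrac12-\beta\Big)\frac{v}{t+1}-v^{p},
\]
and since $v^{p-1}\leq M^{p-1}(t+1)^{-\beta(p-1)}=M^{p-1}(t+1)^{-1}$ (here $\beta(p-1)=1$ is used), the right-hand side is at least $\dfrac{v}{t+1}\big(\tfrac12-\beta-M^{p-1}\big)$, which is $\geq 0$ as soon as $M^{p-1}\leq\tfrac12-\beta$. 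On the boundary $x=0$: $v>0$ handles the Dirichlet condition; $\partial_\nu v=v_x|_{x=0}=\dfrac{\sigma}{2(t+1)}\,v\geq 0$ handles the Neumann condition; and, because $\sigma$ is a positive constant, $\sigma v_t|_{x=0}+v_x|_{x=0}=\dfrac{v}{t+1}\Big(\dfrac{\sigma}{2}-\sigma\beta+\dfrac{\sigma^{3}}{4(t+1)}\Big)\geq 0$, since $\tfrac{\sigma}{2}>\sigma\beta$. Finally $v(\cdot,0)=M\exp(-(x-\sigma)^2/4)$, so $v(\cdot,0)\geq\varphi$ holds whenever $\varphi$ is small enough (it suffices, e.g., that $\varphi$ be pointwise dominated by this fixed small Gaussian, which covers all $\varphi$ of small sup-norm and, say, compact support) — this is exactly where the smallness hypothesis is used.

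The one genuinely delicate point, and the sole place where $p>3$ enters, is the choice of the profile exponent $\beta$: the differential inequality forces $\beta(p-1)\geq 1$ so that $v^{p}$ is absorbed by the positive heat term, whereas the sign condition $v_t-v_{xx}\geq 0$ forces $\beta<\tfrac12$; these two requirements are compatible exactly when $\tfrac{1}{p-1}<\tfrac12$, i.e.\ $p>3$ — the one-dimensional Fujita-type threshold. (For the genuine heat kernel, $\beta=\tfrac12$, the same computation already works whenever $p\geq 4$; amplifying the temporal factor is precisely what recovers the remaining range $3<p<4$.) Once $v$ is constructed, the comparison principle of \cite{vBDC} yields $0\leq u\leq v\leq M$, so $u$ is bounded, hence global, and positive.
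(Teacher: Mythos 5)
Your construction is correct and is essentially the paper's own proof: the paper uses the same modified heat kernel $A(t+1)^{-\gamma}e^{-(x+y)^{2}/(4t+4)}$ with $\gamma=\tfrac{1}{p-1}$, a Gaussian centre placed to the right of $\Omega$ (the paper takes $-y=2\sigma\gamma$ so the boundary terms cancel exactly, whereas your centre $\sigma$ leaves a strictly positive surplus — an immaterial difference), exploits $p>3\Leftrightarrow\gamma<\tfrac12$ to absorb $v^{p}$ into the positive heat term for small amplitude, and concludes by comparison under the same reading of ``$\varphi$ sufficiently small'' as $\varphi\leq v(\cdot,0)$.
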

\begin{proof}
Consider the function $v(x,t)= A(t+1)^{-\gamma} e^\frac{-(x+y)^2}{4t+4}$ defined in $\mathbb{R}^- \times \mathbb{R}^+$ with $A>0$, $\gamma =\frac{1}{p-1}$ and $y=-2\sigma\gamma$. A simple calculation leads to
\begin{displaymath}
\partial_t v - \partial_x^2 v +v\partial_x v - v^p   = \frac{v}{2(t+1)} \Big( -2\gamma +1 -(x+y)v - v^{p-1} \Big).                           
\end{displaymath}
By definition of $\gamma$ and $p>3$, we have $-2\gamma +1>0$. Since $v^{p-1} \leq A^{p-1}$, and because $-(x+y)>0$ for all $x \in \Omega$, we obtain $\partial_t v - \partial_x^2 v +v\partial_x v - v^p  \geq 0$ by choosing $A$ small enough. The case of the Dirichlet boundary conditions is clear because $v\geq 0$. For the dynamical boundary conditions and the Neumann boundary conditions ($\sigma \equiv0$), we use the definition of $y$ and we have
\begin{displaymath}
\sigma \partial_t v(0,t) + \partial_\nu v(0,t)  \geq \frac{v(0,t)}{2(t+1)}\Big( -2\sigma \gamma -y\Big) \geq 0 .
\end{displaymath}
Thus, $v$ is a super-solution of the Problem (\ref{B_param}) as soon as we choose $0 \leq \varphi \leq v(\cdot,0)$ in $\Omega$.\\
\end{proof}

\subsection{Blow up in unbounded domains}
Here, using some weighted $L^1-$norms, we examine blow-up phenomena for some solutions of Problem (\ref{B_param}) in unbounded domains satisfying the Neumann, the Robin, and some nonlinear boundary conditions. We only consider regular solutions satisfying this standard growth order condition at infinity: for all $a>0$ and for all $t>0$
\begin{equation}\label{growth_infty}
\lim_{|x| \to \infty} u(x,t)e^{-a |x|} = 0 \textrm{ and } \lim_{|x| \to \infty} |\partial_x u(x,t)|e^{-a |x|} = 0 .
\end{equation}
Unless otherwise stated, we always suppose $\Omega= (0,\infty)$. We begin with a lemma which gives a criterion for the blowing-up of the solution. 

\begin{lemma}\label{blowup_lem}
Let $u$ be a solution of Problem (\ref{B_param}) which satisfies the condition (\ref{growth_infty}). If there exists $\alpha>0$ such that 
\begin{displaymath}
N_\alpha(t) := \int_0^\infty u(x,t) e^{-\alpha x} \ dx
\end{displaymath}
blows-up in finite time, then $u$ also blows-up in finite time.
\end{lemma}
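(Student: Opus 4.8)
The plan is to argue by contrapositive: assume $u$ is a global (classical) solution of Problem \ref{B_param} on $(0,\infty) \times [0,\infty)$ satisfying the growth condition \ref{growth_infty}, and show that for every $\alpha>0$ the weighted norm $N_\alpha(t)$ stays finite for all $t>0$, indeed satisfies a differential inequality whose solutions are global. First I would differentiate $N_\alpha$ under the integral sign — legitimate because of the parabolic regularity of $u$ and the exponential decay in \ref{growth_infty} — to get
\begin{displaymath}
N_\alpha'(t) = \int_0^\infty \bigl( \partial_x^2 u - u\partial_x u + u^p - \lambda u \bigr) e^{-\alpha x} \, dx .
\end{displaymath}
Then I would integrate by parts twice in the diffusion term and once in the convection term, pushing the derivatives onto the weight $e^{-\alpha x}$. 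The boundary terms at $x=+\infty$ vanish by \ref{growth_infty}; the boundary terms at $x=0$ produce $-\partial_x u(0,t) + \alpha u(0,t)$ from the Laplacian and $\tfrac12 u(0,t)^2$ from the convection, which are controlled (or signed favorably) once the boundary condition $\mathcal{B}(u)=0$ is invoked — for Neumann $\partial_x u(0,t)=0$, for Robin $\partial_x u(0,t)=a\,u(0,t)$, and for the stated nonlinear conditions $\partial_x u = g(u)$ with $\deg g = 2$ these terms are a polynomial of degree $2$ in $u(0,t)$.

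After integrating by parts, the $\partial_x^2 u$ term becomes $\alpha^2 N_\alpha(t)$, the convection term becomes (up to the boundary contribution) $\tfrac{\alpha}{2}\int_0^\infty u^2 e^{-\alpha x}\,dx \geq 0$, the linear term is $-\lambda N_\alpha(t)$, and the crucial reaction term is $\int_0^\infty u^p e^{-\alpha x}\,dx$. The heart of the matter is then Jensen's inequality applied to the probability measure $\alpha e^{-\alpha x}\,dx$ on $(0,\infty)$: since $p>1$ and $s\mapsto s^p$ is convex on $[0,\infty)$,
\begin{displaymath}
\int_0^\infty u^p e^{-\alpha x}\,dx \;=\; \frac{1}{\alpha} \int_0^\infty u^p \,\alpha e^{-\alpha x}\,dx \;\geq\; \frac{1}{\alpha}\left( \int_0^\infty u \,\alpha e^{-\alpha x}\,dx \right)^p \;=\; \alpha^{p-1} N_\alpha(t)^p .
\end{displaymath}
Combining, I obtain an inequality of the form $N_\alpha'(t) \geq \alpha^{p-1} N_\alpha(t)^p - C\, N_\alpha(t)$ (after absorbing the boundary term at $0$, which for the nonlinear and Robin cases forces a restriction on $\alpha$ to keep the degree-$2$ boundary polynomial dominated by the positive convection/diffusion contributions, or forces choosing $\alpha$ large). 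An ODE of this type, with $p>1$, blows up in finite time provided the initial value $N_\alpha(0)$ is large enough relative to $C$ — but here the logic runs the other way: if the genuine PDE solution $u$ is global and satisfies \ref{growth_infty}, then $N_\alpha(t)$ is finite for all $t$, so in particular the differential inequality cannot force blow-up; contrapositively, once $N_\alpha$ is shown to blow up in finite time (the hypothesis of the lemma), $u$ cannot remain a classical solution with the stated growth, i.e. $u$ blows up in finite time.

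The main obstacle is the boundary term at $x=0$: integrating the convection term $-u\partial_x u = -\tfrac12 \partial_x(u^2)$ against $e^{-\alpha x}$ produces $-\tfrac12 u(0,t)^2$ (with a sign that is a priori bad) together with the good interior term $\tfrac{\alpha}{2}\int u^2 e^{-\alpha x}$, and from $\partial_x^2 u$ one gets $+\alpha u(0,t) - \partial_x u(0,t)$; under the nonlinear boundary condition $\partial_x u(0,t) = g(u(0,t))$ with $g$ of degree $2$, one must check that the resulting boundary polynomial in $u(0,t)$ can be dominated — this is where the hypothesis on the degree of $g$ and the positivity $u\geq 0$ (so $u(0,t)\geq 0$ by the maximum principle) enter, and where one may need to choose $\alpha$ suitably or accept the trade-off recorded in the statement of the next theorems. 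Everything else — differentiation under the integral, the integrations by parts, and the Jensen step — is routine given \ref{growth_infty} and the convexity of $t\mapsto t^p$.
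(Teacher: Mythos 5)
Your proposal proves the wrong statement. Lemma \ref{blowup_lem} is a pure comparison-of-norms observation --- it holds for any solution satisfying \ref{growth_infty}, for any boundary condition and any $\lambda$, $p$ --- and the paper's proof is two lines: writing $e^{-\alpha x}=e^{-\alpha x/2}\cdot e^{-\alpha x/2}$ gives
\begin{displaymath}
N_\alpha(t) \;\le\; \int_0^\infty e^{-\alpha x/2}\,dx \cdot \sup_{x\ge 0}\, u(x,t)e^{-\alpha x/2} \;=\; \frac{2}{\alpha}\,\sup_{x\ge 0}\, u(x,t)e^{-\alpha x/2} \;\le\; \frac{2}{\alpha}\,\sup_{x\ge 0}\, u(x,t),
\end{displaymath}
so blow-up of $N_\alpha$ at a finite time forces blow-up of the weighted sup of $u$, hence of $u$ itself. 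You instead set out to re-derive the differential inequality $N_\alpha'\ge \alpha^{p-1}N_\alpha^p - CN_\alpha$; that computation (integration by parts, boundary terms at $0$, Jensen/H\"older on the reaction term) is essentially correct, but it is the content of Theorem \ref{main_N-thm} and its corollaries, not of this lemma. The lemma cannot depend on that machinery, since it is invoked later under several different boundary conditions and parameter regimes, precisely as the boundary-condition-free bridge between ``$N_\alpha$ blows up'' and ``$u$ blows up.''

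The genuine gap sits in the one place where you do address the lemma's actual implication. You assert that if $u$ is global and satisfies \ref{growth_infty} then $N_\alpha(t)$ is finite for every $t$, and you treat this as incompatible with the hypothesis that $N_\alpha$ blows up in finite time. It is not: ``$N_\alpha$ blows up in finite time'' means $N_\alpha(t)\to\infty$ as $t\to T^-$ for some finite $T$, which is perfectly consistent with $N_\alpha(t)<\infty$ at each individual $t$, because condition \ref{growth_infty} is pointwise in $t$ and gives no locally-in-time uniform control of the integrand. To transfer the blow-up from $N_\alpha$ to $u$ you need a quantitative domination of $N_\alpha(t)$ by a norm of $u(\cdot,t)$ that is finite as long as $u$ has not blown up --- which is exactly the displayed inequality above. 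With that one line added, everything else in your proposal becomes unnecessary for this lemma (though it is, correctly, the skeleton of the theorems that use it).
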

\begin{proof}
Consider $\alpha>0$ such that $N_\alpha$ blows-up in finite time. Using the following inequality
\begin{displaymath}
N_\alpha(t) \leq \int_0^\infty e^{-\alpha x/2} \ dx \cdot \sup_\Omega u(x,t)e^{-\frac{\alpha}{2}x}  = \frac{2}{\alpha} \sup_\Omega u(x,t)e^{-\frac{\alpha}{2}x} ,
\end{displaymath}
and because $N_\alpha$ blows up, we can deduce the blowing up in finite time of the function $u(x,t)e^{-\frac{\alpha}{2}x}$. Then, thanks to the growth order condition (\ref{growth_infty}), the solution $u$ must blow up too. 
\end{proof}

We also need this technical lemma.

\begin{lemma}\label{lem_tech}
Let $u$ be a solution of Problem (\ref{B_param}) where the boundary conditions are the Neumann, the Robin, or some nonlinear boundary conditions $\partial_\nu u = g(u)$. Then, for all $\tau>0$ there exists $c>0$ such that 
\begin{displaymath}
u(0,t)\geq c \ \textrm{ for all  } \ t\geq\tau \ .
\end{displaymath}
\end{lemma}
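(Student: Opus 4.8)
The plan is to show that a positive solution $u$ of Problem \ref{B_param} cannot become arbitrarily small at the boundary point $x=0$ on any time interval of the form $[\tau,\infty)$. The natural tool is a comparison argument: I would construct a subsolution that stays bounded below on $\{0\}\times[\tau,\infty)$, or alternatively invoke the strong maximum principle / Hopf boundary lemma for the parabolic operator $\partial_t - \partial_x^2 + (\text{first order}) + (\text{zeroth order})$ obtained after freezing coefficients. First I would fix $\tau>0$ and restrict attention to the strip $\Omega\times[\tau/2,\infty)$. Since $u$ is a classical positive solution on $\overline\Omega\times(0,T)$, the continuous function $x\mapsto u(x,\tau/2)$ is strictly positive on $\overline\Omega$, hence bounded below by some $c_0>0$ on any compact piece near $x=0$, say on $[0,1]$.

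The second step is to control $u$ from below on a bounded spatial window $[0,1]$ over $[\tau/2,\infty)$, reducing the unbounded-domain statement to a local one. On $[0,1]\times[\tau/2,\infty)$ the term $u^p-\lambda u$ can, after a lower bound on $u$ is bootstrapped, be absorbed; more robustly, note $u^p-\lambda u \ge -\lambda u \ge -|\lambda|u$, so $u$ is a supersolution of the linear problem $\partial_t w = \partial_x^2 w - u\partial_x w - |\lambda| w$ on $[0,1]$. I would then compare $u$ against a fixed positive subsolution $\underline w(x,t)$ of this linear equation on $[0,1]$ that vanishes at $x=1$, has $\underline w(\cdot,\tau/2)\le c_0$ on $[0,1]$ (possible by scaling $c_0$ down), and satisfies an outgoing/dissipative condition at $x=0$ compatible with $\mathcal B(u)=0$ — e.g.\ for the Neumann or dynamical boundary conditions one takes $\underline w$ with $\partial_\nu\underline w\le 0$ at $x=0$, so $\sigma\partial_t\underline w+\partial_\nu\underline w\le\sigma\partial_t\underline w$ and one arranges $\partial_t\underline w\le 0$ there. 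A convenient explicit choice is $\underline w(x,t)=\varepsilon e^{-\mu t}\psi(x)$ with $\psi$ the first Dirichlet eigenfunction-type profile on $[0,1]$ (or simply $\psi(x)=\cos(\pi x/2)$), and $\mu$ large enough that $\mu\psi \ge |\psi''|+\|u\|_{L^\infty([0,1]\times[\tau/2,\infty))}|\psi'|+|\lambda|\psi$; but this forces $\underline w\to 0$, which is the wrong direction.

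So the better route — and the one I would actually pursue — is the strong maximum principle directly. Since $u>0$ solves a linear parabolic equation with locally bounded coefficients on $[0,1]\times[\tau/2,T)$ and is not identically zero, the strong maximum principle gives $u>0$ in the open set, and the Hopf lemma (or, for the boundary point $x=0$ when it is an interior point of the parabolic boundary in the Neumann/dynamical case, the parabolic boundary point lemma) yields $u(0,t)>0$ for every $t>\tau/2$; moreover $u(0,\cdot)$ is continuous and positive on the compact-in-time... no, on $[\tau,T)$ it need not be bounded below a priori if $T<\infty$, but then $u$ blows up and the statement is about the interval where the solution exists, where near any finite $T$ one has $u(0,t)\to\infty$ or stays positive. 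The clean statement: by the Harnack inequality for $u$ on the window $[0,2]\times[\tau/2,\infty)$ (coefficients bounded on this set because $u$ is, on any interval before blow-up), $\inf_{[0,1]\times[\tau,\infty)} u \ge c >0$ with $c$ depending only on $u(\cdot,\tau/2)$ and the coefficient bounds. In particular $u(0,t)\ge c$ for all $t\ge\tau$.

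The main obstacle is making sure the coefficient $-u$ in the drift term is genuinely bounded on the relevant space-time region so that the Harnack/maximum principle constants are uniform in $t$: this is automatic on $[0,1]\times[\tau/2,T)$ only if $u$ itself is bounded there, which is fine as long as we argue below the blow-up time (the lemma is used precisely in the blow-up analysis, where one assumes boundedness up to the time under consideration). If one wants $T=\infty$ genuinely, one invokes that either $u$ is global and bounded on bounded space-time sets, or $u$ blows up and the conclusion $u(0,t)\ge c$ holds vacuously/trivially up to the blow-up time. I would spell this out by: (i) fixing $\tau$ and any $T'<T$, (ii) noting $u$ is bounded on $[0,1]\times[\tau/2,T']$ hence the linearized operator has bounded coefficients there, (iii) applying the parabolic Harnack inequality to conclude $u(0,t)\ge c(\tau,T')$ for $t\in[\tau,T']$, and (iv) observing $c$ can be chosen independent of $T'$ because the relevant bound on $u$ near $x=0$ does not degenerate (indeed $u(0,t)$ is nondecreasing along the flow in the situations where this lemma is applied, by the same monotonicity remarks used in the proofs of Theorems \ref{thm_nonborne} and \ref{full_comparison}). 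The only genuinely delicate point is the treatment of the boundary condition at $x=0$ in the Hopf-type step; for Dirichlet this lemma is not needed, and for Neumann/Robin/dynamical the outgoing sign of $\partial_\nu$ makes the boundary point behave like an interior point for the minimum principle, which is exactly what gives the strict positivity.
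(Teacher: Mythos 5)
Your reduction to the bounded window $[0,1]$ and your observation that the strong maximum principle (plus a Hopf-type argument at $x=0$ for the Neumann/Robin/dynamical conditions) gives $u(0,t)>0$ for each fixed $t$ are both sound. But the lemma asserts a lower bound $c>0$ that is \emph{uniform over all $t\geq\tau$}, and that is exactly where your argument has a genuine gap. The parabolic Harnack inequality relates the value at a later time to values at an earlier time with a constant depending on the $L^\infty$ bound of the drift coefficient $-u$ on the cylinder in question; iterating it over successive unit time slabs in $[\tau,T')$ yields a lower bound that degenerates geometrically as $T'\to\infty$, and the constant itself degenerates if $u$ is unbounded on $[0,1]\times[\tau/2,\infty)$, which you cannot exclude for a global solution. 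Your step (iv) — that the constant can be taken independent of $T'$ because ``$u(0,t)$ is nondecreasing along the flow'' — is an unproved assertion that is essentially equivalent to the lemma itself; no such monotonicity of $u$ is established anywhere, and your own first attempt at an explicit subsolution, $\varepsilon e^{-\mu t}\psi(x)$, decays to zero precisely because you have no mechanism preventing degeneracy in time.

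The paper supplies that mechanism by a different construction: it introduces the solution $v$ of the truncated problem on $[0,1]\times[0,\infty)$ with the same boundary condition $\mathcal{B}(v)=0$ at $x=0$, homogeneous Dirichlet condition at $x=1$, and an initial datum $\varphi_1\leq\varphi$ chosen to be a \emph{subsolution of the stationary equation}, i.e.\ $\partial_x^2\varphi_1-\varphi_1\partial_x\varphi_1+\varphi_1^p-\lambda\varphi_1\geq 0$ with $\varphi_1(1)=0$. The comparison principle gives $u\geq v$ on $[0,1]\times(0,\infty)$, and because the initial datum is a stationary subsolution, $v$ is nondecreasing in time and strictly positive (Lemma 2.1 of \cite{vBPM2}), whence $u(0,t)\geq v(0,t)\geq v(0,\tau)>0$ for all $t\geq\tau$. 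The time-monotonicity of this auxiliary comparison function is the whole point of the proof; without it, or some substitute for it, the Harnack-based argument cannot be closed.
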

\begin{proof} Let $u$ be the positive solution of Problem (\ref{B_param}) with one of the above boundary conditions (denoted by $\mathcal{B}(u)  =  0$), and with the initial data $\varphi$.
Let $v$ be the positive solution of the following problem
\begin{eqnarray*}
\left\{
\begin{array}{lll}
\partial_t v = \partial_x^2 v -v\partial_x v + v^p -\lambda v   & \textrm { in } & [0,1] \times [0,\infty), \\
\mathcal{B}(v)  =  0 & \textrm { on } & \{0\} \times [0,\infty), \\
v =  0 & \textrm { on } & \{1\} \times [0,\infty), \\
v(\cdot,0) =  \varphi_1 & \textrm { in } & [0,1],
\end{array}
\right.
\end{eqnarray*}
where $\mathcal{B}(v)  =  0$ denote the same boundary conditions as in $\mathcal{B}(u)  =  0$, where $\varphi_1 \in \mathcal{C}^2([0,1])$ satisfies  $\varphi_1(1)=0$, $\partial_x^2 \varphi_1 -\varphi_1 \partial_x \varphi_1 + \varphi_1^p -\lambda \varphi_1 \geq 0$ and $0\leq \varphi_1 \leq \varphi$ in $[0,1]$. We refer to \cite{vBPM} for the existence of $v$. Thanks to $u(\cdot,0) \geq v(\cdot,0)$ in $[0,1]$ and $u(1,t) \geq 0=v(1,t)$ for all $t>0$, the comparison principle from \cite{vBDC} implies 
\begin{displaymath}
u(x,t)\geq v(x,t) \textrm{ for all } x \in[0,1] \textrm{  and } t>0 .
\end{displaymath}
Then, the comparison principle and the maximum principle from \cite{vBDC} imply 
\begin{displaymath}
\partial_t v(x,t) \geq 0 \textrm{ and  }  v(x,t) > 0 .
\end{displaymath}
for all $x \in[0,1]$ and $t>0$, see Lemma 2.1 in \cite{vBPM2}. Thus, for all $\tau>0$, we obtain 
\begin{displaymath}
u(0,t) \geq v(0,t) \geq v(0,\tau)>0 \textrm{ for all  }  t \geq \tau .
\end{displaymath}
Remark that, we have $v(0,\tau) \geq \varphi(0)$, and if $\varphi(0)>0$, we can choose $c=\varphi(0)$.
\end{proof}

\begin{theorem}\label{main_N-thm}
Let $\lambda <0$ and $p\geq2$. Then the Problem (\ref{B_param}) admits no global positive solution when $\mathcal{B}(u)=0$ stands for the Neumann boundary conditions.
\end{theorem}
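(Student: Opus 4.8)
The plan is to argue by contradiction. Suppose Problem~\ref{B_param} on $\Omega=(0,\infty)$ has a global positive solution $u$ under the Neumann boundary conditions, $u$ obeying the growth order \ref{growth_infty} as assumed throughout this subsection. I will show that for a suitable $\alpha>0$ the weighted norm
\begin{displaymath}
N_\alpha(t)=\int_0^\infty u(x,t)\,e^{-\alpha x}\,dx
\end{displaymath}
must blow up in finite time; since $N_\alpha(t)$ is finite for every $t$ as long as $u$ exists (by \ref{growth_infty}, exactly as in the proof of Lemma~\ref{blowup_lem}), this contradicts global existence — equivalently one may invoke Lemma~\ref{blowup_lem} directly. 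The computation starts by differentiating $N_\alpha$ in $t$, inserting the equation, and integrating by parts: twice in the diffusion term and once in $-u\partial_x u=-\tfrac12\partial_x(u^2)$. The growth condition \ref{growth_infty} kills all boundary contributions at $+\infty$, the Neumann condition kills the $\partial_x u(0,t)$ boundary term, and one is left with
\begin{displaymath}
N_\alpha'(t)=(\alpha^2-\lambda)\,N_\alpha(t)+\Big(\tfrac12 u(0,t)^2-\alpha\,u(0,t)\Big)+\int_0^\infty\Big(u^p-\tfrac{\alpha}{2}\,u^2\Big)e^{-\alpha x}\,dx .
\end{displaymath}

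Next I estimate the three pieces from below, using $\lambda<0$ and $p\ge 2$. First, $(\alpha^2-\lambda)N_\alpha\ge|\lambda|\,N_\alpha\ge 0$. Second, because $p\ge 2$, Young's inequality absorbs $\tfrac{\alpha}{2}u^2$ into $u^p$: for $p=2$ it suffices to take $\alpha<2$ and no remainder appears, while for $p>2$ one obtains $\int_0^\infty(u^p-\tfrac{\alpha}{2}u^2)e^{-\alpha x}\,dx\ge\tfrac12\int_0^\infty u^p e^{-\alpha x}\,dx-c_2(\alpha)$ with a remainder $c_2(\alpha)=O(\alpha^{2/(p-2)})$; in both cases Jensen's inequality applied with the probability measure $\alpha e^{-\alpha x}\,dx$ gives $\int_0^\infty u^p e^{-\alpha x}\,dx\ge\alpha^{p-1}N_\alpha(t)^p$. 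Third, to dispose of the sign-indefinite boundary term I invoke Lemma~\ref{lem_tech} with $\tau=1$: it produces $c>0$ such that $u(0,t)\ge c$ for all $t\ge 1$, so choosing $\alpha\le c/2$ makes $\tfrac12 u(0,t)^2-\alpha u(0,t)=\tfrac12 u(0,t)\big(u(0,t)-2\alpha\big)\ge 0$ for $t\ge 1$. Combining, for all sufficiently small $\alpha$ and all $t\ge 1$,
\begin{displaymath}
N_\alpha'(t)\ \ge\ |\lambda|\,N_\alpha(t)+c_1\,N_\alpha(t)^p-c_2(\alpha),
\end{displaymath}
with $c_1=c_1(\alpha)>0$ and $c_2(\alpha)\to 0$ as $\alpha\to 0$ (one may take $c_2\equiv0$ when $p=2$).

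To turn this into finite-time blow-up I exploit the freedom in $\alpha$. Since $u(\cdot,1)>0$ we have $N_\alpha(1)>0$, and for $\alpha\le\bar\alpha:=\min\{1,c/2\}$ this is bounded below by the fixed number $\mu:=N_{\bar\alpha}(1)>0$. Shrinking $\alpha$ so that $c_2(\alpha)<|\lambda|\mu$, the inequality gives $N_\alpha'(t)>0$ whenever $N_\alpha(t)\ge\mu$; hence $N_\alpha(t)\ge\mu$ for all $t\ge 1$, and $N_\alpha'\ge|\lambda|\big(N_\alpha-\mu'\big)$ with $\mu':=c_2(\alpha)/|\lambda|<\mu$ forces $N_\alpha$ to grow at least exponentially to arbitrarily large values. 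Once $N_\alpha$ reaches a level $M$ with $c_1 M^p\ge 2c_2(\alpha)$, the inequality becomes $N_\alpha'\ge\tfrac{c_1}{2}N_\alpha^p$ with $p>1$, whose solutions blow up in finite time. This contradicts the global existence of $u$, proving the theorem.

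The delicate point is the passage from the identity for $N_\alpha'$ to a genuine finite-time blow-up: that identity carries the indefinite boundary term $\tfrac12 u(0,t)^2-\alpha u(0,t)$ and the wrong-signed integral $-\tfrac{\alpha}{2}\int u^2 e^{-\alpha x}$, and nothing is assumed about the size of the initial mass, so one must rule out mere unbounded growth. The remedy is to balance three devices simultaneously — $p\ge 2$ to subordinate the quadratic term to the source $u^p$, $\lambda<0$ to keep the zeroth-order term favourable, and (decisively) Lemma~\ref{lem_tech} together with the latitude to take $\alpha$ small, which both annihilates the boundary term for $t\ge1$ and renders every error constant negligible next to the mass $N_\alpha(1)$ already available.
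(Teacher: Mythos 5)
Your proof is correct and follows essentially the same route as the paper: the same weighted norm $N_\alpha$, the same integration by parts with the growth condition \ref{growth_infty} killing the terms at infinity, Lemma \ref{lem_tech} with $\alpha\le c/2$ to neutralize the boundary term $\tfrac12u(0,t)^2-\alpha u(0,t)$, and a superlinear differential inequality closed by H\"older/Jensen and Lemma \ref{blowup_lem}. The only divergence is in absorbing $-\tfrac{\alpha}{2}\int u^2e^{-\alpha x}\,dx$: the paper splits pointwise on $u\le 1$ versus $u\ge 1$ (using $-\lambda>0$ and $\alpha\le\min\{1,-2\lambda\}$) to obtain $N'_\alpha\ge\tfrac12\int_0^\infty u^pe^{-\alpha x}\,dx$ with no remainder, whereas your Young-inequality remainder $c_2(\alpha)$ forces the extra two-stage ODE argument (exponential growth, then finite-time blow-up) --- both are valid, but the paper's case split is cleaner.
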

\begin{proof}
We aim to prove the existence of $\alpha>0$ and $\beta>0$ such that $N'_\alpha \geq \beta N_\alpha^p$ where 
\begin{displaymath}
N_\alpha(t) := \int_0^\infty u(x,t) e^{-\alpha x} \ dx
\end{displaymath}
Derivating the function $N_\alpha$, we obtain
\begin{equation*}
 \begin{split}
    N'_\alpha(t) =& \int_0^\infty \partial_t u(x,t) e^{-\alpha x} \ dx \\
                 =& \int_0^\infty \Big(\partial_x^2 u(x,t)\Big)e^{-\alpha x} \ dx -\int_0^\infty \Big(u(x,t)\partial_x u(x,t)\Big)e^{-\alpha x} \ dx\\
                  & +   \int_0^\infty  u^p(x,t) e^{-\alpha x} \ dx -\lambda \int_0^\infty  u(x,t) e^{-\alpha x} \ dx			.
 \end{split}
\end{equation*}
Using the growth order condition (\ref{growth_infty}) and integrating by parts, we obtain
\begin{displaymath}
\int_0^\infty \Big(\partial_x^2 u(x,t)\Big)e^{-\alpha x} \ dx = \alpha^2 \int_0^\infty  u(x,t)e^{-\alpha x} \ dx + \partial_\nu u(0,t) -\alpha u(0,t)
\end{displaymath}
and 
\begin{displaymath}
\int_0^\infty \Big(u(x,t)\partial_x u(x,t)\Big)e^{-\alpha x} \ dx = \frac{\alpha}{2} \int_0^\infty  u^2(x,t)e^{-\alpha x} \ dx  -\frac{u^2(0,t)}{2} .
\end{displaymath}
Thus, we have
\begin{equation}\label{derive_N}
 \begin{split}
N'_\alpha(t) = & \int_0^\infty  u(x,t) e^{-\alpha x} \Big( \alpha^2 - \frac{\alpha}{2}u(x,t) - \lambda + u^{p-1}(x,t) \Big) \ dx \\
						   &  + \partial_\nu u(0,t) -\alpha u(0,t) +\frac{u^2(0,t)}{2}.
 \end{split}
\end{equation}
Thanks to Lemma \ref{lem_tech}, and considering $u$ from a time $\tau>0$, we can assume that 
\begin{displaymath}
c:=\min_{t>0} u(0,t)>0 \ .
\end{displaymath}
Then, if $\alpha$ is small enough ($\alpha \leq c/2$), we have $-\alpha u(0,t) +\frac{u^2(0,t)}{2} \geq 0$. Then, the Neumann boundary conditions imply
\begin{equation}\label{minor_N}
N'_\alpha(t) \geq   \int_0^\infty  u(x,t) e^{-\alpha x} \Big( \alpha^2 - \frac{\alpha}{2}u(x,t) - \lambda + u^{p-1}(x,t) \Big) \ dx .
\end{equation}
Shrinking $\alpha$, we can suppose $\alpha \leq -2\lambda$ and $\alpha \leq 1$. When $u(x,t) \leq 1$,  we have $-\lambda - \alpha u(x,t)/2 >0$. On the other hand,  if $u(x,t) \geq 1$, we have $u^{p-1}(x,t) -\alpha u(x,t)/2 \geq u^{p-1}(x,t)/2$. Hence, we obtain: 
\begin{displaymath}
N'_\alpha(t) \geq  \frac{1}{2} \int_0^\infty  u^p(x,t) e^{-\alpha x}  \ dx .
\end{displaymath}
H\"older inequality 
\begin{displaymath}
\int_0^\infty  u(x,t) e^{-\alpha x}  \ dx \leq  \Bigg( \int_0^\infty  u^p(x,t) e^{-\alpha x}  \ dx \Bigg) ^\frac{1}{p} \Bigg( \int_0^\infty   e^{-\alpha x}  \ dx \Bigg) ^\frac{p-1}{p}
\end{displaymath}
leads  to $N'_\alpha(t) \geq \beta N^p_\alpha(t)$ with 
\begin{displaymath}
\beta = \frac{1}{2} \Bigg( \int_0^\infty   e^{-\alpha x}  \ dx \Bigg) ^{1-p} .
\end{displaymath}
Finally, we prove the blowing-up of $N_\alpha$ in finite time. Integrating the differential inequality $N'_\alpha(t) \geq \beta N^p_\alpha(t)$ between $0$ and $t>0$, we obtain
\begin{displaymath}
\frac{1}{1-p}\Big( N_\alpha^{1-p}(t)- N_\alpha^{1-p}(0) \Big) =  \int_{N_\alpha(0)} ^{N_\alpha(t)} s^{-p} \ ds  = \int_0^t \frac{N_\alpha'(t)}{N_\alpha^p(t)} \ dt \geq \beta t ,
\end{displaymath}
and 
\begin{displaymath}
N_\alpha(t) \geq \Big(N_\alpha^{1-p}(0) -(p-1)\beta t \Big)^\frac{-1}{p-1} .
\end{displaymath}
Since of $\frac{-1}{p-1}<0$, the right hand side term blows up at $t=\frac{N_\alpha^{1-p}(0)}{(p-1)\beta}>0$. We conclude with Lemma \ref{blowup_lem}.
\end{proof}

\begin{corollary}
Let $\lambda <0$ and $p\geq2$. Then the Problem (\ref{B_param}) admits no global positive solution when $\mathcal{B}(u)=0$ stands for the nonlinear boundary conditions $\partial_\nu u = g(u)$, where $g$ is a function such that there exists $\delta>0$ and $\varepsilon\leq 1/2$ satisfying
\begin{displaymath}
g(\eta)  \geq \delta \eta - \varepsilon \eta^2 .
\end{displaymath}  
\end{corollary}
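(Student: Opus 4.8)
The plan is to run the argument of Theorem \ref{main_N-thm} essentially verbatim, the only modification being the estimate of the boundary term at $x=0$. Suppose for contradiction that $u$ is a global positive solution with the boundary condition $\partial_\nu u = g(u)$. As before I would fix a parameter $\alpha>0$ (to be shrunk later), set $N_\alpha(t):=\int_0^\infty u(x,t)e^{-\alpha x}\,dx$, differentiate under the integral (legitimate by the growth condition \ref{growth_infty}), substitute the equation and integrate by parts twice to reach the identity \ref{derive_N}:
\[
N'_\alpha(t)=\int_0^\infty u\,e^{-\alpha x}\Big(\alpha^2-\tfrac{\alpha}{2}u-\lambda+u^{p-1}\Big)\,dx+\partial_\nu u(0,t)-\alpha u(0,t)+\tfrac{u^2(0,t)}{2}.
\]

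The new input is the treatment of the three boundary terms. Since $\partial_\nu u(0,t)=g(u(0,t))$ and $g(\eta)\geq \delta\eta-\varepsilon\eta^2$, the boundary contribution is bounded below by $(\delta-\alpha)u(0,t)+(\tfrac12-\varepsilon)u(0,t)^2$. The hypothesis $\varepsilon\leq 1/2$ makes the quadratic term nonnegative, and $\delta>0$ lets us impose $\alpha\leq\delta$ so that the linear term is nonnegative as well; hence the boundary terms may simply be dropped, giving the analogue of \ref{minor_N}. Note that, in contrast with the Neumann case, Lemma \ref{lem_tech} is not actually needed here, since the extra term $\delta u(0,t)$ already dominates $\alpha u(0,t)$ without any lower bound on $u(0,\cdot)$; positivity of $u$ alone guarantees $N_\alpha(t)>0$ (restarting from a time $\tau>0$ if one wants to be safe, exactly as in Theorem \ref{main_N-thm}).

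From this point the argument is identical to that of Theorem \ref{main_N-thm}: shrink $\alpha$ so that $\alpha\leq\min\{\delta,-2\lambda,1\}$ — all three bounds are positive because $\lambda<0$ — and split the integral according to whether $u(x,t)\leq 1$ (where $-\lambda-\tfrac\alpha2 u\geq 0$) or $u(x,t)\geq 1$ (where $u^{p-1}-\tfrac\alpha2 u\geq \tfrac12 u^{p-1}$, using $p\geq 2$), to obtain $N'_\alpha(t)\geq \tfrac12\int_0^\infty u^p e^{-\alpha x}\,dx$. H\"older's inequality then yields the Bernoulli-type inequality $N'_\alpha\geq\beta N_\alpha^p$ with $\beta=\tfrac12\big(\int_0^\infty e^{-\alpha x}\,dx\big)^{1-p}$, which forces $N_\alpha$ to blow up in finite time; Lemma \ref{blowup_lem} transfers the blow-up to $u$, contradicting global existence.

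I do not expect a genuine obstacle here: the only point requiring a little care is the compatibility of the constraints on $\alpha$ (the blow-up step needs $\alpha\leq -2\lambda$ and $\alpha\leq 1$, while the boundary step now needs $\alpha\leq\delta$), and since all three are positive a single small $\alpha$ works. It is worth emphasizing that the constant $\varepsilon\leq 1/2$ in the hypothesis is precisely what is needed to absorb the term $\tfrac12 u^2(0,t)$ produced by the Burgers nonlinearity $u\partial_x u$ in the integration by parts, which is why the corollary is stated with exactly that threshold.
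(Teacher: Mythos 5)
Your proof is correct and follows essentially the same route as the paper: keep the identity \ref{derive_N}, use $g(\eta)\geq\delta\eta-\varepsilon\eta^2$ with $\alpha\leq\delta$ and $\varepsilon\leq 1/2$ to discard the boundary terms, and then rerun the remainder of the proof of Theorem \ref{main_N-thm}. Your side remark that Lemma \ref{lem_tech} becomes unnecessary here is a correct (minor) observation.
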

\begin{proof}
We follow the proof of Theorem \ref{main_N-thm}. We just change the choice of $\alpha$: let $\alpha>0$ such that $\alpha \leq \delta$, and use the following minoration in Equation (\ref{derive_N}):
\begin{equation*}
 \begin{split}
\partial_\nu  u(0,t) - \alpha  u(0,t) +\frac{1}{2} u^2(0,t) = &g(u)  - \alpha  u(0,t) +\frac{1}{2} u^2(0,t)\\
						   \geq & (\delta - \alpha ) u(0,t) +(\frac{1}{2}-\varepsilon) u^2(0,t) \geq 0.
 \end{split}
\end{equation*}
Then, we return to Equation (\ref {minor_N}) and we can prove that there exists a $\beta>0$ such that $N'_\alpha(t) \geq \beta N^p_\alpha(t)$ for $t\in(0,T)$ .
\end{proof}

When $\lambda=0$, the choice of $\alpha$ is too strict. Meanwhile, we obtain some blow-up results imposing some restrictions on the exponent $p$ and on the initial data.

\begin{theorem}
Let $\lambda =0$ and $1< p \leq 3$. Then the Problem (\ref{B_param}) admits no global positive solution when $\mathcal{B}(u)=0$ stands for the Neumann boundary conditions.
\end{theorem}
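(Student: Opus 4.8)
The plan is to run the weighted-$L^1$ argument of Theorem~\ref{main_N-thm}, with a parameter $\alpha>0$ to be pinned down later, on the quantity
\[
N_\alpha(t)=\int_0^\infty u(x,t)\,e^{-\alpha x}\,dx ,
\]
and to reach from some time $\tau>0$ on a differential inequality of the form $N_\alpha'\ge \tfrac12\alpha^{p-1}N_\alpha^p$; since $p>1$ this forces $N_\alpha$ to become infinite at a finite time, and Lemma~\ref{blowup_lem} then transfers the blow-up to $u$, contradicting global existence. Differentiating $N_\alpha$, using the growth condition \ref{growth_infty} to kill the contributions at $x=+\infty$ and integrating by parts twice exactly as in Theorem~\ref{main_N-thm} gives formula \ref{derive_N} with $\lambda=0$,
\[
N_\alpha'(t)=\int_0^\infty u\,e^{-\alpha x}\Big(\alpha^2-\tfrac\alpha2 u+u^{p-1}\Big)dx+\partial_\nu u(0,t)-\alpha u(0,t)+\tfrac12 u^2(0,t),
\]
and the Neumann condition makes $\partial_\nu u(0,t)=0$.

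The new difficulty with respect to Theorem~\ref{main_N-thm} is that, with $\lambda=0$, the ``$-\lambda$'' cushion inside the bracket is gone, so $\alpha^2-\tfrac\alpha2 u+u^{p-1}$ is no longer bounded below by a positive multiple of $u^{p-1}$. I would compensate in two steps. First, by Lemma~\ref{lem_tech} there are $\tau>0$ and $c>0$ with $u(0,t)\ge c$ for $t\ge\tau$; then, for $\alpha\le\min\{1,c/4\}$,
\[
-\alpha u(0,t)+\tfrac12 u^2(0,t)=u(0,t)\big(\tfrac12 u(0,t)-\alpha\big)\ \ge\ \tfrac14 c^2=:\eta_0>0 .
\]
Second, splitting $\int_0^\infty u\,e^{-\alpha x}\big(\alpha^2-\tfrac\alpha2 u+u^{p-1}\big)dx=\alpha^2 N_\alpha+\int_0^\infty\big(u^p-\tfrac\alpha2 u^2\big)e^{-\alpha x}dx$, I would use the pointwise bound, valid for $2\le p\le 3$,
\[
u^p-\tfrac\alpha2 u^2\ \ge\ \tfrac12 u^p-c_1(\alpha,p),\qquad c_1(\alpha,p):=\max_{u\ge0}\Big(\tfrac\alpha2 u^2-\tfrac12 u^p\Big),
\]
where a one–line maximization gives $c_1(\alpha,p)=\tfrac{\alpha(p-2)}{2p}\big(\tfrac{2\alpha}{p}\big)^{2/(p-2)}$ for $p>2$ (and $c_1=0$ for $p=2$, since then $\tfrac\alpha2-\tfrac12\le0$). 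Hence $\int_0^\infty(u^p-\tfrac\alpha2 u^2)e^{-\alpha x}dx\ge \tfrac12\int_0^\infty u^p e^{-\alpha x}dx-c_1(\alpha,p)/\alpha$, and the point is that $c_1(\alpha,p)/\alpha=O(\alpha^{2/(p-2)})$, which (because $2/(p-2)\ge2$, i.e. because $p\le3$) does not beat the $\alpha^2$-sized quantity $\eta_0$ once $\alpha$ is tied to $c$ by $\alpha=\min\{1,c/4\}$; a direct check of the constants shows $\eta_0-c_1(\alpha,p)/\alpha\ge0$.

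Putting the pieces together, for $t\ge\tau$,
\[
N_\alpha'(t)\ \ge\ \alpha^2 N_\alpha(t)+\tfrac12\int_0^\infty u^p(x,t)\,e^{-\alpha x}\,dx+\big(\eta_0-c_1(\alpha,p)/\alpha\big)\ \ge\ \tfrac12\int_0^\infty u^p(x,t)\,e^{-\alpha x}\,dx .
\]
Hölder's inequality $N_\alpha=\int_0^\infty u\,e^{-\alpha x}\le\big(\int_0^\infty u^p e^{-\alpha x}\big)^{1/p}\big(\int_0^\infty e^{-\alpha x}\big)^{(p-1)/p}$ turns this into $N_\alpha'(t)\ge\tfrac12\alpha^{p-1}N_\alpha^p(t)$; integrating this super-linear inequality from $\tau$ yields $N_\alpha^{1-p}(t)\le N_\alpha^{1-p}(\tau)-\tfrac{p-1}{2}\alpha^{p-1}(t-\tau)$, whose right-hand side vanishes at the finite time $t^\ast=\tau+\tfrac{2}{(p-1)\alpha^{p-1}}N_\alpha^{1-p}(\tau)$, while the left-hand side stays positive; since $N_\alpha(\tau)\in(0,\infty)$ (finiteness from \ref{growth_infty}, positivity since $u>0$ for $t>0$), $N_\alpha$ blows up no later than $t^\ast$, and Lemma~\ref{blowup_lem} gives the contradiction with global existence.

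The step I expect to be the real obstacle is controlling the Burgers term $-\tfrac\alpha2\int_0^\infty u^2 e^{-\alpha x}dx$: the whole scheme rests on absorbing it into $\tfrac12\int_0^\infty u^p e^{-\alpha x}dx$ up to a constant that the boundary term $\tfrac12u^2(0,t)$ (via Lemma~\ref{lem_tech}) can pay for, and this balance works precisely for $2\le p\le3$. For $1<p<2$ the quantity $u^p-\tfrac\alpha2 u^2$ is unbounded below and $L^1_w\cap L^p_w$ does not control $L^2_w$, so that sub-case does not follow from the argument above and would have to be handled separately — for instance by first establishing a space–time bound on $u$, or by comparison with an explicit blowing-up sub-solution as in Theorem~\ref{thm_nonborne}. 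I would also flag that the endpoint $p=3$ is borderline and requires keeping the numerical value of $c_1(\alpha,p)$, not merely its order, under control.
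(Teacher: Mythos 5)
Your scheme is the paper's: the same weighted norm $N_\alpha$, the same identity (Equation~\ref{derive_N} with $\lambda=0$ and $\partial_\nu u(0,t)=0$), Lemma~\ref{lem_tech} to make the boundary contribution $-\alpha u(0,t)+\tfrac12u^2(0,t)$ nonnegative for small $\alpha$, and H\"older to close the ODE inequality. Where you differ is only in how the integrand $u\,e^{-\alpha x}\bigl(\alpha^2-\tfrac{\alpha}{2}u+u^{p-1}\bigr)$ is bounded below: you absorb the Burgers term by a global maximization of $\tfrac{\alpha}{2}u^2-\tfrac12 u^p$ and pay the resulting constant $c_1(\alpha,p)/\alpha$ out of the boundary cushion $\eta_0=c^2/4$, whereas the paper reserves a fraction $(1-\beta)$ of the reaction term and splits pointwise on $u\lessgtr 2\alpha$, reducing everything to the single condition $\beta\alpha^{p-3}\ge 2^{1-p}$ (Equation~\ref{alpha-beta}) with no quantitative use of the boundary term beyond its sign. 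Your constants check out for $2\le p\le 3$ (indeed $c_1(\alpha,p)/\alpha\le\alpha^2/6<4\alpha^2\le\eta_0$ with your choice $\alpha\le\min\{1,c/4\}$), and that part of the argument is correct.

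The genuine gap is that the theorem asserts nonexistence for all $1<p\le 3$ and your proof, as you yourself flag, covers only $2\le p\le 3$; the range $1<p<2$ is left open, so the statement is not proved. Your diagnosis of the obstacle is accurate: for $p<2$ the quantity $u^p-\tfrac{\alpha}{2}u^2$ is unbounded below, so no pointwise absorption of the convection term into the reaction term is possible, and $L^1_w\cap L^p_w$ no longer interpolates $L^2_w$. Be aware that the paper's own proof does not obviously escape this either: its key step for $u>2\alpha$, namely $-\tfrac{\alpha}{2}u+\beta u^{p-1}\ge u\bigl(-\tfrac{\alpha}{2}+\beta(2\alpha)^{p-2}\bigr)$, uses that $t\mapsto t^{p-2}$ is nondecreasing, which fails for $p<2$ (and indeed $\alpha^2-\tfrac{\alpha}{2}u+u^{p-1}\to-\infty$ as $u\to\infty$ when $p<2$, so no choice of $\alpha,\beta$ makes the bracket pointwise nonnegative). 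So your restriction is not an artifact of your particular decomposition; but from the standpoint of proving the stated theorem, a separate argument for $1<p<2$ is still missing and would need a genuinely new ingredient (an a priori bound on $\int_0^\infty u^2e^{-\alpha x}\,dx$, or a comparison argument) rather than the sketchy alternatives you mention.
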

\begin{proof}
Return to the proof of Theorem \ref{main_N-thm}. Under the Neumann boundary conditions and with $\lambda=0$, Equation (\ref{derive_N}) becomes
\begin{displaymath}
N'_\alpha(t) =  \int_0^\infty  u(x,t) e^{-\alpha x} \Big( \alpha^2 - \frac{\alpha}{2}u(x,t) + u^{p-1}(x,t) \Big) \ dx  -\alpha u(0,t) +\frac{u^2(0,t)}{2}.
\end{displaymath}
Let $\beta \in (0,1)$ and put it into the previous equation:
\begin{equation*}
 \begin{split}
N'_\alpha(t) = &  \int_0^\infty  u(x,t) e^{-\alpha x} \Big( \alpha^2 - \frac{\alpha}{2}u(x,t) + \beta u^{p-1}(x,t) \Big) \ dx \\
							 & -\alpha u(0,t) +\frac{u^2(0,t)}{2} + (1-\beta)\int_0^\infty  u^p(x,t) e^{-\alpha x}  \ dx.
 \end{split}
\end{equation*}
If $u \leq 2\alpha$, we have $\alpha^2 -\alpha u/2 \geq 0$, whereas $u> 2\alpha$ implies 
\begin{displaymath}
-\frac{\alpha}{2}u + \beta u^{p-1}\geq u \Big(-\frac{\alpha}{2}+\beta(2\alpha)^{p-2}) \Big) .
\end{displaymath}
It is non negative if
\begin{equation}\label{alpha-beta}
\beta \alpha^{p-3} \geq 2^{1-p}.
\end{equation}
Thanks to $1< p \leq 3$, Equation (\ref{alpha-beta}) is achieved by choosing $\alpha>0$ sufficiently small and $\beta \in (0,1)$ depending on $p$. Thus, we obtain
\begin{displaymath}
N'_\alpha(t) \geq  -\alpha u(0,t) +\frac{u^2(0,t)}{2} + (1-\beta)\int_0^\infty  u^p(x,t) e^{-\alpha x}  \ dx.
\end{displaymath}
Then, we can suppose that $u(0,t)>c>0$ for all $t>0$ (see Lemma \ref{lem_tech}), and with $\alpha <c/2$ we have $-\alpha u(0,t) +\frac{u^2(0,t)}{2}>0$. Hence 
\begin{displaymath}
N'_\alpha(t) \geq  (1-\beta)\int_0^\infty  u^p(x,t) e^{-\alpha x}  \ dx.
\end{displaymath}
As in the proof of Theorem \ref{main_N-thm}, we use H\"older inequality and we are led to $N'_\alpha \geq \delta N^p_\alpha$ with $\delta>0$ depending on $\alpha$, $\beta$ and $p$. Hence, $N_\alpha$ blows-up in finite time, so does the solution $u$, see Lemma \ref{blowup_lem}.
\end{proof}

\begin{theorem}
Let $\lambda =0$ and $p>3$. Then the Problem (\ref{B_param}) admits no global positive solution when $\mathcal{B}(u)=0$ stands for the Neumann boundary conditions and if the initial data satisfies $\varphi(0)>2^\frac{1-p}{p-3}$.
\end{theorem}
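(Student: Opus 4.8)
The plan is to rerun the weighted $L^1$-norm argument already used for Theorem~\ref{main_N-thm} and for the preceding case $\lambda=0$, $1<p\le 3$. Fix $\alpha>0$, set $N_\alpha(t)=\int_0^\infty u(x,t)e^{-\alpha x}\,dx$, differentiate, use the decay~\ref{growth_infty} to kill the contributions at $+\infty$, integrate the diffusion term by parts twice and the Burgers term once, and insert $\lambda=0$ together with the Neumann condition $\partial_\nu u(0,t)=0$. This gives the identity of the form~\ref{derive_N},
\[
N'_\alpha(t)=\int_0^\infty u(x,t)e^{-\alpha x}\Big(\alpha^2-\tfrac{\alpha}{2}u(x,t)+u^{p-1}(x,t)\Big)\,dx-\alpha u(0,t)+\tfrac12u^2(0,t).
\]
As in the $p\le 3$ case, I would split off a fraction of the reaction term: for $\beta\in(0,1)$ write $u^{p-1}=\beta u^{p-1}+(1-\beta)u^{p-1}$, whence
\[
N'_\alpha(t)=\int_0^\infty ue^{-\alpha x}\big(\alpha^2-\tfrac{\alpha}{2}u+\beta u^{p-1}\big)\,dx+(1-\beta)\int_0^\infty u^pe^{-\alpha x}\,dx-\alpha u(0,t)+\tfrac12u^2(0,t).
\]
The target is to choose $\alpha$ and $\beta$ so that both the first integrand and the boundary contribution are non-negative; then $N'_\alpha\ge(1-\beta)\int_0^\infty u^pe^{-\alpha x}\,dx$, H\"older's inequality (exactly as in Theorem~\ref{main_N-thm}) turns this into $N'_\alpha\ge\delta N_\alpha^p$ with $\delta=(1-\beta)\alpha^{p-1}$, and integrating this differential inequality makes $N_\alpha$, hence $u$, blow up in finite time by Lemma~\ref{blowup_lem}.

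Non-negativity of the first integrand reduces, just as before, to condition~\ref{alpha-beta}, namely $\beta\alpha^{p-3}\ge 2^{1-p}$: for $u\le 2\alpha$ one uses $\alpha^2-\tfrac\alpha2u\ge0$, and for $u>2\alpha$ one uses $p>3>2$, so that $u^{p-2}>(2\alpha)^{p-2}$, together with $\beta(2\alpha)^{p-2}\ge\tfrac\alpha2$. This is exactly where $p>3$ reverses the situation: now $p-3>0$, so~\ref{alpha-beta} can \emph{not} be met by taking $\alpha$ small; instead $\alpha$ must be chosen above the threshold $(2^{1-p}/\beta)^{1/(p-3)}$, which tends to $2^{\frac{1-p}{p-3}}$ as $\beta\uparrow1$. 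The boundary contribution, on the other hand, equals $\tfrac12u(0,t)\big(u(0,t)-2\alpha\big)$, which is non-negative only if $u(0,t)\ge 2\alpha$, so it forces $\alpha$ to stay \emph{below} half of any available lower bound for $u(0,t)$.

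To reconcile these two opposite constraints I would first produce a good lower bound for $u(0,t)$ out of the hypothesis on $\varphi$. Running the proof of Lemma~\ref{lem_tech} (possibly after a harmless time-shift) with the auxiliary comparison subsolution built on a small interval $[0,\rho]$, Neumann at $0$ and Dirichlet at $\rho$, and with initial datum $\varphi_1$ taken essentially constant, equal to a value just below $\varphi(0)$, near the origin and then dropping convexly to $0$ at $\rho$ — so that $\varphi_1$ is a genuine subsolution of $w''-ww'+w^p\ge0$, meets the Neumann datum at $0$, and lies below $\varphi$ — one obtains $u(0,t)\ge c$ for all admissible $t$, with $c$ as close to $\varphi(0)$ as wished. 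Because $\varphi(0)$ lies above the critical value $2^{\frac{1-p}{p-3}}$, one can then fix $\beta\in(0,1)$ close to $1$ and $\alpha>0$ satisfying $\beta\alpha^{p-3}\ge 2^{1-p}$ and $2\alpha\le c$ at the same time; with this choice $N'_\alpha(t)\ge(1-\beta)\int_0^\infty u^pe^{-\alpha x}\,dx$, and the rest proceeds as above, ending with Lemma~\ref{blowup_lem}.

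The main obstacle is precisely this juggling around the boundary term. One has to (i) squeeze from $\varphi(0)$ a lower bound on $u(0,t)$ sharp enough to fall inside the narrow window left open by~\ref{alpha-beta}, which is what forces the delicate choice of the subsolution $\varphi_1$ in Lemma~\ref{lem_tech}, and (ii) check that the combination of $\beta\alpha^{p-3}\ge 2^{1-p}$ with $2\alpha\le u(0,t)$ yields exactly the sufficient condition on $\varphi(0)$ recorded in the statement. Everything else — the two integrations by parts, the elementary sign study of $\alpha^2-\tfrac\alpha2u+\beta u^{p-1}$, the H\"older estimate and the ODE comparison for $N_\alpha$ — is routine and parallels the proofs already given.
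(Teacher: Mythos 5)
Your proposal reproduces the paper's proof essentially step for step: the same weighted norm $N_\alpha$, the same identity \ref{derive_N} with $\lambda=0$ and the Neumann condition, the same splitting $u^{p-1}=\beta u^{p-1}+(1-\beta)u^{p-1}$ leading to condition \ref{alpha-beta} and hence to the lower bound $\alpha\geq 2^{\frac{1-p}{p-3}}\beta^{\frac{-1}{p-3}}$, the same appeal to Lemma \ref{lem_tech} to bound $u(0,t)$ from below by (almost) $\varphi(0)$, and the same H\"older--ODE conclusion via Lemma \ref{blowup_lem}. The only remark worth adding is that the compatibility check you defer in your step (ii) actually produces the requirement $u(0,t)\geq 2\alpha \to 2^{\frac{-2}{p-3}}$ as $\beta\uparrow 1$, which is larger than the stated threshold $2^{\frac{1-p}{p-3}}$ by a factor of $2$ --- a discrepancy that is equally present in the paper's own proof.
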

\begin{proof}
The proof is similar to the previous one. Go back to Equation (\ref{alpha-beta}): since $p>3$, we must choose $\alpha$ such that
\begin{displaymath}
\alpha \geq 2^\frac{1-p}{p-3}\beta^\frac{-1}{p-3}.
\end{displaymath}
Under this condition, $N_\alpha$ satisfies the differential inequality
\begin{displaymath}
N'_\alpha(t) \geq  -\alpha u(0,t) +\frac{u^2(0,t)}{2} + (1-\beta)\int_0^\infty  u^p(x,t) e^{-\alpha x}  \ dx.
\end{displaymath}
Because $\alpha$ can not be too small, we must use the assumption $\varphi(0)>2^\frac{1-p}{p-3}$. Using Lemma \ref{lem_tech}, we have
\begin{displaymath}
u(0,t) \geq \varphi(0)>2^\frac{1-p}{p-3} \textrm{ , for all } t>0.
\end{displaymath}
Thus, with $\beta$ very close to $1$ and with $\alpha =2^\frac{1-p}{p-3}\beta^\frac{-1}{p-3}$, we obtain $-\alpha u(0,t) +\frac{u^2(0,t)}{2}\geq 0$. Hence, we have
\begin{displaymath}
N'_\alpha(t) \geq  (1-\beta)\int_0^\infty  u^p(x,t) e^{-\alpha x}  \ dx .
\end{displaymath}
We conclude with H\"older inequality and the blowing up of $N_\alpha$.
\end{proof}

\begin{corollary}
Let $\lambda =0$ and $p>3$. Then the Problem (\ref{B_param}) admits no global positive solution when $\mathcal{B}(u)=0$ stands for the Neumann boundary conditions and if the initial data satisfies 
\begin{equation}\label{Hyp-L0p3}
\int_0^\infty \varphi(x)e^{-x} \ dx > \frac{1}{2} . 
\end{equation}
\end{corollary}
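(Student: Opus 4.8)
The plan is to rerun the computation of the preceding theorem, the only novelty being the treatment of the boundary contribution at $x=0$: instead of making it nonnegative through a pointwise lower bound on $\varphi$, I bound it below by a constant and let the size of $\int_0^\infty\varphi e^{-x}\,dx$ absorb the loss. Since $\Omega=(0,\infty)$, it is enough by Lemma~\ref{blowup_lem} to produce some $\alpha>0$ for which $N_\alpha(t)=\int_0^\infty u(x,t)e^{-\alpha x}\,dx$ blows up in finite time, $u$ denoting a positive solution of Problem~\ref{B_param} under the Neumann conditions and satisfying the growth order \ref{growth_infty}. Differentiating $N_\alpha$, integrating by parts and killing the contributions at $+\infty$ by \ref{growth_infty}, as in the derivation of Equation~\ref{derive_N} with $\lambda=0$ and $\partial_\nu u(0,t)=0$, one obtains
\[
N_\alpha'(t)=\int_0^\infty u\,e^{-\alpha x}\Big(\alpha^2-\tfrac{\alpha}{2}u+u^{p-1}\Big)\,dx-\alpha u(0,t)+\tfrac12u^2(0,t);
\]
writing $u^{p-1}=\beta u^{p-1}+(1-\beta)u^{p-1}$ with $\beta\in(0,1)$ and invoking Equation~\ref{alpha-beta}, that is, $\beta\alpha^{p-3}\ge2^{1-p}$, so that $\alpha^2-\tfrac{\alpha}{2}u+\beta u^{p-1}\ge0$ for all $u\ge0$, this becomes
\[
N_\alpha'(t)\ \ge\ (1-\beta)\int_0^\infty u^p e^{-\alpha x}\,dx-\alpha u(0,t)+\tfrac12u^2(0,t),
\]
exactly the estimate used in the preceding theorem.

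At this point I branch off. Since $\varphi(0)$ is not assumed large, $-\alpha u(0,t)+\tfrac12u^2(0,t)$ may be negative, but $\eta\mapsto-\alpha\eta+\tfrac12\eta^2$ is $\ge-\tfrac{\alpha^2}{2}$ on $[0,\infty)$; combining this with the H\"older inequality $\int_0^\infty u^p e^{-\alpha x}\,dx\ge\big(\int_0^\infty e^{-\alpha x}\,dx\big)^{1-p}N_\alpha^p=\alpha^{p-1}N_\alpha^p$ gives the autonomous differential inequality
\[
N_\alpha'(t)\ \ge\ (1-\beta)\,\alpha^{p-1}N_\alpha(t)^p-\tfrac{\alpha^2}{2}.
\]
An inequality $y'\ge Ay^p-B$ with $A,B>0$ and $p>1$ forces $y$ to blow up in finite time once $y(0)$ exceeds the positive root $(B/A)^{1/p}$ of $Ay^p=B$: then $y$ is nondecreasing, so $y'\ge Ay(0)^p-B>0$ stays bounded below by a positive constant, $y$ reaches the level $(2B/A)^{1/p}$ in finite time, and thereafter $y'\ge\tfrac12Ay^p$ yields a finite blow-up time. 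Hence $N_\alpha$ — and, by Lemma~\ref{blowup_lem}, the solution $u$ — blows up in finite time as soon as
\[
N_\alpha(0)=\int_0^\infty\varphi(x)e^{-\alpha x}\,dx\ >\ \Big(\frac{\alpha^{3-p}}{2(1-\beta)}\Big)^{1/p}.
\]
One can improve this threshold by retaining the term $\alpha^2\int_0^\infty u\,e^{-\alpha x}\,dx=\alpha^2N_\alpha$ and lower-bounding only $u^{p-1}-\tfrac{\alpha}{2}u$ after splitting off a fraction $\delta\in(0,1)$ of $u^{p-1}$: this adds an $N_\alpha$-linear term, and the criterion becomes that $N_\alpha(0)$ exceed the positive root of $\delta\alpha^{p-1}y^p+(\alpha^2-M)y-\tfrac{\alpha^2}{2}=0$ for an explicit $M=M(\alpha,\delta,p)\ge0$, a version that no longer needs Equation~\ref{alpha-beta}.

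It remains to optimize over the free parameters. Taking $\alpha\le1$, so that $e^{-\alpha x}\ge e^{-x}$ on $[0,\infty)$ gives $N_\alpha(0)\ge\int_0^\infty\varphi e^{-x}\,dx$ and the hypothesis \ref{Hyp-L0p3} takes over, and minimizing the blow-up bound above over the admissible parameters ($\alpha$ and $\beta$, respectively $\alpha$ and $\delta$), one is left with a condition of the form $\int_0^\infty\varphi e^{-x}\,dx>(\text{explicit function of }p)$; carrying this elementary but somewhat lengthy optimization through should produce precisely the constant on the right-hand side of \ref{Hyp-L0p3}. I expect this constrained optimization — together with the routine check that the admissible parameter set is nonempty for $p>3$, which holds since $2^{(1-p)/(p-3)}<1$ — to be the only delicate point; everything else is a verbatim rerun of the preceding theorem supplemented by H\"older's inequality and Lemma~\ref{blowup_lem}.
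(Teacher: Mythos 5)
Your differential-inequality machinery is sound (the Hölder step, the lemma that $y'\ge Ay^p-B$ blows up once $y(0)>(B/A)^{1/p}$, the reduction via Lemma~\ref{blowup_lem}), but the argument you actually carry out does not prove the corollary, because it produces the wrong threshold. Your criterion is $N_\alpha(0)>\bigl(\alpha^{3-p}/(2(1-\beta))\bigr)^{1/p}$, and since you must take $\alpha\le 1$ to deduce $N_\alpha(0)\ge\int_0^\infty\varphi e^{-x}\,dx$, this threshold is bounded below by $(1/2)^{1/p}$ for every admissible $(\alpha,\beta)$. The right-hand side of \ref{Hyp-L0p3} is much smaller (for $p=4$ it equals $5\cdot 2^{-7}\cdot(4/5)^{-4}\approx 0.095$, versus your floor of $\approx 0.84$), so Hypothesis \ref{Hyp-L0p3} does not imply your blow-up criterion. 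The optimization you defer to the end cannot ``produce precisely the constant'': no choice of parameters in your primary scheme reaches it. The structural reason is that you use the \emph{superlinear} term $\gamma N_\alpha^p$ to dominate the constant loss $-\alpha^2/2$, which forces $N_\alpha(0)$ above the root of $Ay^p=B$; the paper instead keeps a \emph{linear} term in $N_\alpha$ and lets that absorb the loss, which gives a root of a linear equation and hence a genuinely smaller threshold.

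Concretely, the paper splits both $\alpha^2=\delta\alpha^2+(1-\delta)\alpha^2$ and $u^{p-1}=\beta u^{p-1}+(1-\beta)u^{p-1}$, imposes the pointwise inequality $\delta\alpha^2-\tfrac{\alpha}{2}u+\beta u^{p-1}\ge 0$ (which pins $\alpha=2^{(1-p)/(p-3)}\beta^{-1/(p-3)}\delta^{(2-p)/(p-3)}$), and retains the leftover linear term to get $N_\alpha'\ge (1-\delta)N_\alpha+\gamma N_\alpha^p-\tfrac{\alpha^2}{2}$. The linear ODE comparison then shows $N_\alpha$ stays above $\tfrac{\alpha^2}{2(1-\delta)}$ forever once it starts there, after which $N_\alpha'\ge\gamma N_\alpha^p$ gives blow-up; minimizing $\tfrac{\alpha^2}{2(1-\delta)}$ at $\delta=\tfrac{2p-4}{3p-7}$ with $\beta\to 1$ yields exactly the constant in \ref{Hyp-L0p3}. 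Your ``improved version'' points in this direction but is not set up correctly: absorbing $-\tfrac{\alpha}{2}u$ into a fraction of $u^{p-1}$ alone forces $M\ge\alpha/2$ (take $u\to 0$), so your linear coefficient $\alpha^2-M$ need not be positive; the cross term must be absorbed jointly by $\delta\alpha^2$ \emph{and} $\beta u^{p-1}$. Since the entire content of the corollary is the explicit constant, the deferred optimization is not a routine loose end but the proof itself, and it has to be run on the paper's decomposition, not yours. (As a side remark, the paper's own display carries the coefficient $(1-\delta)$ rather than $(1-\delta)\alpha^2$ on the linear term; the stated constant is the one obtained from the former, so you should be aware of this discrepancy when reconstructing the computation.)
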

\begin{proof}
Return to the proof of Theorem \ref{main_N-thm}. Under the Neumann boundary conditions and introducing $\beta$ and $\delta \in (0,1)$ in Equation (\ref{derive_N}), we obtain
\begin{equation*}
 \begin{split}
N'_\alpha(t) = &  \int_0^\infty  u(x,t) e^{-\alpha x} \Big( \delta \alpha^2 - \frac{\alpha}{2}u(x,t) + \beta u^{p-1}(x,t) \Big) \ dx \\
							 & -\alpha u(0,t) +\frac{u^2(0,t)}{2} +(1-\delta) \alpha^2 N_\alpha(t) +(1-\beta)\int_0^\infty  u^p(x,t) e^{-\alpha x}  \ dx.
 \end{split}
\end{equation*}
Studying both cases $u\geq 2\alpha \delta$ and $u\leq 2\alpha \delta$, we obtain $\delta \alpha^2 -\alpha u/2 +\beta u^{p-1} \geq 0$ if
\begin{displaymath}
\alpha = 2^\frac{1-p}{p-3} \beta^\frac{-1}{p-3} \delta^\frac{2-p}{p-3}.
\end{displaymath}
Since of $u^2/2-\alpha u \geq -\alpha^2/2$ and using H\"older inequality we have
\begin{equation}\label{N-gamma}
N'_\alpha(t) \geq (1-\delta)\alpha^2 N_\alpha(t) + \gamma  N^p_\alpha(t)-\frac{\alpha^2}{2}  , 
\end{equation}
where $\gamma = (1-\beta) \Big( \int_0^\infty   e^{-\alpha x}  \ dx \Big)^{1-p}>0$. First, consider this minoration
\begin{displaymath}
N'_\alpha(t) \geq (1-\delta) \alpha^2 N_\alpha(t)  -\frac{\alpha^2}{2}  . 
\end{displaymath}
Thus, $N_\alpha$ satisfies 
\begin{displaymath}
N_\alpha(t) \geq \frac{1}{2(1-\delta)}  + A e^{(1-\delta)\alpha^2 t}  , \ A\in \mathbb{R}. 
\end{displaymath}
In particular, $N_\alpha(0) \geq (2-2\delta)^{-1}  + A $. Choosing $\delta>0$ close to $0$ and with $\beta \in (0,1)$ close to $1$, Hypothesis (\ref{Hyp-L0p3}) implies $N_\alpha(0) > (2-2\delta)^{-1}$. Thus, $A$ is positive and we obtain
\begin{displaymath}
(1-\delta)\alpha^2 N_\alpha(t) - \frac{\alpha^2}{2}  \geq 0. 
\end{displaymath}
From Equation (\ref{N-gamma}), we deduce 
\begin{displaymath}
 N'_\alpha(t)   \geq \gamma  N^p_\alpha(t). 
\end{displaymath}
Hence $N_\alpha$ blows-up, and the solution $u$ blows up too, see Lemma \ref{blowup_lem}.
\end{proof}

Finally, if $\Omega= (-\infty,0)$, we must change the weight in $N_\alpha$ and we obtain this results concerning the nonlinear boundary conditions.

\begin{theorem}
Let $\lambda \leq 0$ and $ p \geq 2$. Then the Problem (\ref{B_param}) admits no global positive solution when $\mathcal{B}(u)=0$ stands for the nonlinear boundary conditions $\partial_\nu u = g(u)$, where $g$ is a function such that there exists $c>0$ and $d>0$ satisfying
\begin{displaymath}
g(\eta)  \geq c \eta^2 +	 d \eta .
\end{displaymath}  
\end{theorem}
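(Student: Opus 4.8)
The plan is to adapt the weighted $L^{1}$ technique used for Theorem~\ref{main_N-thm} to the domain $\Omega=(-\infty,0)$, replacing the exponential weight $e^{-\alpha x}$ by its mirror image $e^{\alpha x}$, which still decays at the free end of $\Omega$ and equals $1$ at the boundary point $x=0$. So I would argue by contradiction: suppose that, for some $\alpha>0$ to be fixed, Problem~\ref{B_param} with $\mathcal{B}(u)=0$ the stated nonlinear condition has a global positive solution $u$ satisfying the growth order~\ref{growth_infty}, and set
\begin{displaymath}
N_{\alpha}(t):=\int_{-\infty}^{0}u(x,t)\,e^{\alpha x}\,dx .
\end{displaymath}
This is finite by~\ref{growth_infty}, and it is enough to produce $\alpha>0$ and $\beta>0$ with $N_{\alpha}'(t)\ge\beta N_{\alpha}^{p}(t)$: then $N_{\alpha}$ blows up in finite time, and by the $(-\infty,0)$-analogue of Lemma~\ref{blowup_lem} (proved the same way, the growth condition being symmetric in $x$) the solution $u$ blows up too, contradicting global existence.

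The first step is to differentiate $N_{\alpha}$, insert the equation, and integrate by parts twice in the diffusion term and once in the convection term. By~\ref{growth_infty} every endpoint contribution at $-\infty$ vanishes (the functions $u\,e^{\alpha x}$, $u^{2}e^{\alpha x}$ and $\partial_{x}u\,e^{\alpha x}$ all tend to $0$ there), while at $x=0$ the outward normal points in the $+x$ direction, so $\partial_{\nu}u(0,t)=\partial_{x}u(0,t)=g(u(0,t))$. Parallel to~\ref{derive_N} this gives
\begin{displaymath}
N_{\alpha}'(t)=\int_{-\infty}^{0}u(x,t)e^{\alpha x}\Big(\alpha^{2}-\lambda+\tfrac{\alpha}{2}u(x,t)+u^{p-1}(x,t)\Big)\,dx+g(u(0,t))-\alpha u(0,t)-\tfrac12 u^{2}(0,t).
\end{displaymath}
Compared with Theorem~\ref{main_N-thm} the sign of the quadratic convection contribution has flipped: in the bulk it is now a \emph{favourable} $+\tfrac{\alpha}{2}u$, but at the boundary it appears as an \emph{unfavourable} $-\tfrac12 u^{2}(0,t)$. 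Since $\lambda\le 0$ and $u>0$, the bracket inside the integral is $\ge u^{p-1}(x,t)$, so the bulk term is bounded below by $\int_{-\infty}^{0}u^{p}(x,t)e^{\alpha x}\,dx\ge 0$; as in Theorem~\ref{main_N-thm}, the roles of $\lambda\le 0$ and $p\ge 2$ are exactly to keep this scheme consistent (the latter also matching the global existence obtained for $1<p\le 2$ in the previous subsection), and the only quantity left to control is the boundary term.

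This boundary term is the heart of the matter, and it is exactly where the hypothesis on $g$ enters. From $g(\eta)\ge c\eta^{2}+d\eta$ one has
\begin{displaymath}
g(u(0,t))-\alpha u(0,t)-\tfrac12 u^{2}(0,t)\ \ge\ \big(c-\tfrac12\big)u^{2}(0,t)+(d-\alpha)u(0,t),
\end{displaymath}
which is nonnegative once $\alpha\le d$ and the quadratic lower bound of $g$ dominates the convection boundary term; here Lemma~\ref{lem_tech}, which keeps $u(0,t)$ away from $0$ after a waiting time, can be used exactly as in Theorem~\ref{main_N-thm} to absorb the linear part.

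Dropping this nonnegative boundary term and applying H\"older's inequality
\begin{displaymath}
\int_{-\infty}^{0}u(x,t)e^{\alpha x}\,dx\le\Big(\int_{-\infty}^{0}u^{p}(x,t)e^{\alpha x}\,dx\Big)^{1/p}\Big(\int_{-\infty}^{0}e^{\alpha x}\,dx\Big)^{(p-1)/p}
\end{displaymath}
yields $N_{\alpha}'(t)\ge\beta N_{\alpha}^{p}(t)$ with $\beta=\big(\int_{-\infty}^{0}e^{\alpha x}\,dx\big)^{1-p}$; integrating this scalar differential inequality exactly as at the end of the proof of Theorem~\ref{main_N-thm} shows that $N_{\alpha}$ blows up in finite time, and Lemma~\ref{blowup_lem} then finishes the contradiction. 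I expect the only real difficulty to be the bookkeeping of the boundary term at $x=0$: one must pick $\alpha$ small enough that the quadratic and linear lower bounds for $g$ absorb both $\tfrac12 u^{2}(0,t)$ and the $\alpha u(0,t)$ produced by the Laplacian, while at the same time keeping the interior integrand nonnegative and the weight integrable; the remaining steps — the integrations by parts, discarding nonnegative bulk terms, H\"older, and the scalar ODE comparison — are routine once the identity above is set up.
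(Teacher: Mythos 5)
Your strategy is the paper's own: the mirrored weight $N_\alpha(t)=\int_{-\infty}^0 u(x,t)e^{\alpha x}\,dx$, the same two integrations by parts, H\"older's inequality, and the scalar differential inequality $N_\alpha'\ge\beta N_\alpha^p$. Your identity for $N_\alpha'$ (bulk bracket $\alpha^2-\lambda+\tfrac{\alpha}{2}u+u^{p-1}$, boundary contribution $g(u(0,t))-\alpha u(0,t)-\tfrac12u^2(0,t)$) is what direct computation gives, and the bulk estimate and the H\"older/ODE steps are fine.

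The gap is exactly at the boundary term, and you have not closed it. You reduce to the nonnegativity of $(c-\tfrac12)u^2(0,t)+(d-\alpha)u(0,t)$ and assert this holds ``once the quadratic lower bound of $g$ dominates the convection boundary term,'' invoking Lemma~\ref{lem_tech} to absorb the linear part. But the hypothesis only provides $c>0$: if $c<\tfrac12$ the quadratic coefficient is negative, and since the whole point is to show that $u$ blows up you have no upper bound on $u(0,t)$, so the term $-(\tfrac12-c)u^2(0,t)$ can dominate everything, including the bulk integral (there is no pointwise comparison between $u^2(0,t)$ and $\int_{-\infty}^0u^pe^{\alpha x}\,dx$). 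Lemma~\ref{lem_tech} points the wrong way here: it bounds $u(0,t)$ from \emph{below}, which is what rescues the favourable $+\tfrac12u^2(0,t)$ in Theorem~\ref{main_N-thm} on $(0,\infty)$, but is useless against an unfavourable quadratic term. For comparison, the paper records the convective boundary contribution as $-\tfrac{\alpha}{2}u^2(0,t)$ and then takes $\alpha=\min\{2c,d\}$, so that the smallness of $\alpha$ absorbs the quadratic part; with the constant $\tfrac12$ that your integration by parts actually produces, no choice of $\alpha$ helps, and the argument only closes under the stronger assumption $c\ge\tfrac12$ (mirroring the condition $\varepsilon\le\tfrac12$ in the corollary on $(0,\infty)$). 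So you have correctly located the one genuinely delicate point of the proof, but the step you propose there does not work as stated.
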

\begin{proof}
As in the case of $\Omega= (0,\infty)$, we use a weighted $L^1-$norm:
\begin{displaymath}
N_\alpha(t) = \int_{-\infty}^0 u(x,t) e^{\alpha x} \ dx \ , \ \textrm{ with } \alpha >0 . 
\end{displaymath}
We compute $N'_\alpha(t)=\int_{-\infty}^0 \partial_t u(x,t) e^{\alpha x} \ dx $, and using the equations of Problem (\ref{B_param}), integration by parts leads to
\begin{displaymath}
N'_\alpha(t) = \int_{-\infty}^0 (\alpha^2 u + \alpha u^2 +u^p) e^{\alpha x} \ dx  + \partial_x u(0,t) - \alpha u(0,t) -\frac{\alpha}{2}u^2(0,t) . 
\end{displaymath}
Thanks to $\partial_\nu u(0,t) =\partial_x u(0,t)$ in $(-\infty,0)$, choosing $\alpha = \min \{ 2c, \ d \}$, we obtain
\begin{displaymath}
N'_\alpha(t) \geq  \int_{-\infty}^0 (\alpha^2 u + \alpha u^2 +u^p) e^{\alpha x} \ dx  \geq  \int_{-\infty}^0 u^p e^{\alpha x} \ dx. 
\end{displaymath}
H\"older inequality leads to the differential equation $N'_\alpha(t) \geq \gamma N^p_\alpha(t)$ with $\gamma>0$. Hence $N_\alpha$ and the solution $u$ blow up in finite time.
\end{proof}

\section*{Acknowledgments} 
The author would like to thank Professor Joachim von Below for his valuable discussions and advices.

\medskip

\medskip


\begin{thebibliography}{99}

\bibitem{Am} (MR1071170)
     \newblock H. Amann,
     \newblock ``Ordinary differential equations,"
     \newblock Walter de Gruyter, Berlin, 1990.
     
\bibitem{BBR} (MR1800975)
     \newblock C. Bandle, J. von Below and W. Reichel,
     \newblock \emph{Parabolic problems with dynamical boundary conditions: eigenvalue expansions and blow up},
     \newblock Rend. Lincei Math. Appl., \textbf{17} (2006), 35–67.     
     
\bibitem{vBDC} (MR1800975)
     \newblock J. von Below and C. De Coster,
     \newblock \emph{A Qualitative Theory for Parabolic Problems under Dynamical Boundary Conditions},
     \newblock Journal of Inequalities and Applications, \textbf{5} (2000), 467-–486.

\bibitem{vBN}(MR1373774)
		\newblock J. von Below and S. Nicaise, 
		\newblock \emph{Dynamical interface transition in ramified media with diffusion},
		\newblock  Comm. Partial Differential Equations, \textbf{21} (1996), 255–279.

\bibitem{vBPM2} (MR1974455)
     \newblock J. von Below and G. Pincet-Mailly,
     \newblock \emph{Blow Up for Reaction Diffusion Equations Under Dynamical Boundary Conditions},
     \newblock Communications in Partial Differential Equations \textbf{28},  (2003), 223–-247.    
    
\bibitem{vBPM} (MR2409941)
     \newblock J. von Below and G. Pincet-Mailly,
     \newblock \emph{Blow Up for some non linear parabolic problems with convection under dynamical boundary conditions},
     \newblock Discrete Continuous Dynam. Systems - Supplement Volume,  (2007), 1031–-1041.

\bibitem{Escher} (MR1233197)
		\newblock J. Escher, 
		\newblock \emph{Quasilinear parabolic systems with dynamical boundary conditions},
		\newblock Comm. Partial Differential Equations, \textbf{18} (1993), 1309–1364.
		
\bibitem{Rault}
     \newblock J-F. Rault,
     \newblock \emph{Ph\'enom\`ene d'explosion et existence globale pour quelques probl\`emes paraboliques sous les conditions au bord dynamiques},
     \newblock Th\`ese doctorale \`a l'Universit\'e du Littoral C\^ote d'Opale, (2010).
          
     
\end{thebibliography}
\end{document}